\newcommand{\bel}[1]{\begin{equation*}\label{#1}}
	\newcommand{\be}{\begin{equation}}
		\newcommand{\ba}{\begin{eqnarray}}
			\newcommand{\ea}{\end{eqnarray}}
		\newcommand{\qe}{\end{equation}}
	\newcommand{\R}{{\mathbb R}}
	\newcommand{\Z}{{\mathbb Z}}
	\newcommand{\C}{{\mathbb C}}
	\newcommand{\T}{{\mathbb T}}
	\newcommand{\supp}{{\mathrm{supp}}}
	\newcommand{\eg}{\begin{example}}
		\newcommand{\egd}{\end{example}}
	\newcommand{\tm}{\begin{thm}}
		\newcommand{\tmd}{\end{thm}}
	\newcommand{\co}{\begin{coro}}
		\newcommand{\cod}{\end{coro}}
	\newcommand{\enu}{\begin{enumerate}}
		\newcommand{\enud}{\end{enumerate}}
	\newcommand{\rmk}{\begin{rem}}
		\newcommand{\rmkd}{\end{rem}}
	\theoremstyle{theorem}
	\newtheorem{thm}{Theorem}[section]
	\newtheorem{prop}[thm]{Proposition}
	\theoremstyle{example}
	\newtheorem{example}[thm]{Example}
	\newtheorem{coro}[thm]{Corollary}
	\theoremstyle{lemma}
	\newtheorem{lemma}[thm]{Lemma}
	\theoremstyle{definition}
	\newtheorem{defi}[thm]{Definition}
	\theoremstyle{proof}
	\theoremstyle{remark}
	\newtheorem{rem}[thm]{Remark}
	\theoremstyle{remark}
	\newtheorem{conj}[thm]{Conjecture}
\begin{document}

		\title[Restriction Estimates for 2D Surfaces of Finite Type 3 and Applications to the Dispersive Equations]{Restriction Estimates for 2D Surfaces of Finite Type 3 and Applications to Dispersive Equations}

		\author{Jiajun Wang}
		\address{Jiajun Wang: Courant Institute of Mathematical Sciences,
			New York University, New York, NY}
		\email{jw9409@nyu.edu}
		
		\begin{abstract}
			In this paper, we prove the restriction estimates for 2D surfaces 
			$S:=\lbrace (\xi_{1},\xi_{2},\xi_{1}^{3}\pm\xi_{2}^{3}): (\xi_{1},\xi_{2})\in [0,1]^{2} \rbrace$ 
			by reducing to Wang--Wu's result on the perturbed paraboloid and to the results on the perturbed hyperboloid obtained by Buschenhenke, M\"{u}ller, and Vargas, as well as by Guo and Oh. The method is based on the rescaling technique developed in \cite{15}. Besides, we will use the estimates to give a better analysis for discrete nonlinear Schr\"{o}dinger equations.
		\end{abstract}

		\maketitle
		\numberwithin{equation}{section}
		\section{Introduction}
	    In harmonic analysis, Fourier restriction is a very vibrant field that has deep connections with other areas such as dispersive equations, incidence geometry, and number theory.
	    
		The Fourier restriction problem for a hypersurface S in $\mathbb{R}^{d}$ asks for the range of $(p,q)$, such that
		\begin{equation*}
			\left(\int_{S}|\widehat{f}|^{p}d\sigma\right)^{\frac{1}{p}}\le C\|f\|_{L^{q}(\mathbb{R}^{d})}, \quad \forall f\in \mathcal{S}(\mathbb{R}^{d}),
		\end{equation*}
		where $d\sigma$ is surface measure and constant $C=C(p,q,S)$ is independent of $f$.
		
		This kind of problem is first proposed by Stein in \cite{39}. After that, Fefferman, Stein, and Zygmund thoroughly solved this problem for curves with non--vanishing curvature in 2D (see \cite{40,41}). In higher dimensions, Tomas and Stein first proved the $L^{q}\to L^{2}$ estimate for hypersurfaces with non--vanishing Gaussian curvature in \cite{20,43}.
		
		For convenience, we prefer to use its dual form, in terms of the extension operator.
		\begin{equation}\label{extension}
			\|E_{S}g\|_{L^{p}(\mathbb{R}^{d})}\le C\|g\|_{L^{q}(S,d\sigma)},
		\end{equation}
		where the extension operator is defined as
		\begin{equation*}
			E_{S}g(x):=\int_{S}e^{ix\cdot\omega}g(\omega)d\sigma.
		\end{equation*}
		
		Using the $\varepsilon$--removal lemma (see \cite{48,51}), Stein's conjecture in 3D can be stated in the following local form.
		
		\begin{conj}\label{conj1}
			If $S$ is a smooth surface in $\R^{3}$ with positive definite second fundamental form, then we have for any $\varepsilon>0$,
			\begin{equation*}
				\|E_{S}g\|_{L^{p}(B_{R}^{3})}\lesssim_{\varepsilon}R^{\varepsilon}\|g\|_{L^{\infty}(S,d\sigma)},
			\end{equation*}
			where $p>3$ and $B_{R}^{3}\subseteq\R^{3}$ denotes a ball centered at the origin with radius $R$ .
		\end{conj}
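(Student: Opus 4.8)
The plan is to attack Conjecture~\ref{conj1} through the broad--narrow paradigm of Bourgain--Guth together with Guth's polynomial partitioning method. First I would normalize the problem: by the $\varepsilon$-removal lemma already cited, it suffices to prove the stated local estimate, and by an affine change of variables one may assume the second fundamental form of $S$ equals the identity at a fixed point, so that on the unit scale $S$ is a small, uniformly convex perturbation of the paraboloid. Fix a large parameter $K$, decompose $S$ into $\sim K^{2}$ caps $\tau$ of diameter $K^{-1}$, and write $E_{S}g=\sum_{\tau}E_{\tau}g$. At each point $x\in B_{R}^{3}$ one separates a \emph{narrow} regime --- the caps carrying most of the sum lie in the $K^{-1}$-neighbourhood of a line --- from a \emph{broad} regime, where there exist $\sim K$-transverse caps. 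The narrow contribution is rescaled: each line-neighbourhood cluster is an affine image of the whole surface at a smaller scale, so a parabolic rescaling converts it into an instance of the same estimate on a ball of radius $R/K^{2}$, and summing over the $O(K)$ clusters costs only a fixed power of $K$, which is absorbed by choosing $K$ slowly growing in $R$. The substance of the argument is therefore the broad term, for which one applies Guth's polynomial partitioning.

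For the broad piece I would iterate the following dichotomy. Given the mass $\int_{B_{R}}|E_{S}g|^{p}$, choose a polynomial $P$ of degree $O(1)$ whose zero set $Z(P)$ bisects this mass inside every ball in a covering of $B_{R}$, split $B_{R}$ into $O(1)$ cells, and on the union of the cells run the induction on the radius (each cell meets $O(\deg P)$ of the wave packets). This leaves the \emph{wall} $W$, the $R^{1/2}$-neighbourhood of $Z(P)$, where the wave packets are algebraically constrained. One then separates, at intermediate scale $\rho$, the wave packets \emph{tangent} to $Z(P)$ from those \emph{transverse} to it: transverse packets through a given point are few (a transversality count, ultimately multilinear Kakeya), while tangent packets are governed by the $\ell^{2}$-decoupling theorem of Bourgain--Demeter for the paraboloid, which is available unconditionally and supplies a square-function bound with no loss of powers of $R$. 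Balancing the cellular, tangential, and transverse contributions, running the scale induction on $\rho$, and finally interpolating with the sharp $L^{2}$ (Tomas--Stein) estimate cited above, yields the local restriction estimate in the range $p>3+\delta$ for some explicit $\delta>0$ --- this is exactly the mechanism behind Guth's $p>3.25$, its improvement by Hong Wang, and the Wang--Wu estimate invoked in this paper.

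The hard part --- and the reason Conjecture~\ref{conj1} is still open --- is removing $\delta$ entirely, i.e.\ reaching the critical exponent $p=3$. Every known implementation of the scheme above leaks: passing from the transverse/multilinear term back to the linear estimate is intrinsically lossy (the multilinear-to-linear conversion of Bourgain--Guth, or the bilinear-to-linear conversion of Tao--Vargas--Vega, each gives only $p>3+c$), and the tangential term runs into Kakeya-type concentration --- a sharp linear restriction estimate at $p=3$ in $\R^{3}$ would already imply the (open) Kakeya conjecture there, so no argument that does not in effect resolve Kakeya can succeed. Closing the gap appears to require a genuinely new ingredient: sharp control on the number of $\rho$-tubes tangent to an algebraic surface through a common point, beyond what polynomial partitioning plus multilinear Kakeya currently give; or a loss-free $n$-linear restriction estimate; or an entirely different framework. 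I would accordingly treat Conjecture~\ref{conj1} as a long-term target, and the realistic, productive goal --- the one this paper pursues for finite-type surfaces --- is to push the admissible range of $p$ downward by importing the sharpest available perturbed-paraboloid and perturbed-hyperboloid estimates through rescaling.
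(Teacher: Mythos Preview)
Your proposal is appropriate in spirit: the statement in question is labeled a \emph{Conjecture} in the paper, and the paper does not supply a proof of it. It is Stein's restriction conjecture in $\R^{3}$, which is open, and the paper invokes it only as background and motivation. The paper's actual contribution is to import the best known partial result toward it (the Wang--Wu bound $p>22/7$ for perturbed paraboloids, stated as Theorem~\ref{WW}) into the finite-type-$3$ setting via rescaling.

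Your write-up correctly identifies all of this: you describe the broad--narrow/polynomial-partitioning machinery that underlies the current records, you explain why the method stalls short of $p=3$ (the multilinear-to-linear loss and the implicit Kakeya obstruction), and you conclude that the conjecture should be treated as a long-term target rather than something provable by the methods at hand. That is an accurate assessment, and it matches the role the conjecture plays in the paper. There is nothing to compare against, since the paper offers no proof of Conjecture~\ref{conj1}; your discussion is a reasonable survey of the landscape rather than a proof, and you are right not to claim otherwise.
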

		
		To obtain a general $L^{q}\to L^{p}$ estimate, many important tools have been introduced. Notably, Bourgain introduced wave packet decomposition and induction on scale in \cite{44,45}. Moyua, Vargas, Vega, and Tao established the bilinear approach to study the restriction estimates (see \cite{46,47,48}). Applying the multilinear restriction estimate,  Bourgain and Guth established bounds for extension operators and more general H\"{o}rmander-type oscillatory integral operators in \cite{65}. Guth innovatively introduced the polynomial partitioning method and obtained the 3D estimate for the perturbed paraboloid when $p>3.25$ (see \cite{50}). Recently, Wang and Wu used refined decoupling theorems and two--ends Furstenberg inequalities to study the restriction problem, and improved the 3D estimate to $p>22/7$ (see \cite{49}).
		
		Naturally, we may ask for the range of restriction estimate for certain degenerate surfaces, i.e., those whose Gaussian curvature may vanish somewhere. In this direction, Ikromov, Kempe, and M\"{u}ller derived a sharp Tomas--Stein type estimate for the finite-type surfaces using oscillatory integral theories (see \cite{11,9,10}). Buschenhenke, M\"{u}ller, and Vargas utilized the bilinear method and proved a general restriction estimate for the finite-type surfaces (see \cite{6}). Recently, Li, Miao, and Zheng developed the rescaling technique and gave a better estimate for the finite-type surfaces $F_{4}^{2}:=\lbrace(\xi_{1},\xi_{2},\xi_{1}^{4}+\xi_{2}^{4}):(\xi_{1},\xi_{2})\in [0,1]^{2}\rbrace$ in \cite{15}.
		
		In this paper, we focus on surfaces of finite type $3$, with the prototypical case $S:=\lbrace(\xi_{1},\xi_{2},\xi_{1}^{3}\pm\xi_{2}^{3}):(\xi_{1},\xi_{2})\in [0,1]^{2}\rbrace$ or, equivalently, $S:=\lbrace(\xi_{1},\xi_{2},\xi_{1}^{3}+\xi_{2}^{3}):(\xi_{1},\xi_{2})\in [-1,1]^{2}\rbrace$. Besides, the sharp restriction estimate for curves of finite type $3$, with the prototypical case $\gamma:=\lbrace(t,t^{3}):t\in[-1,1]\rbrace$, has been completely proved (see \cite{61,63})
		
		The importance of such restriction estimates for surfaces/curves of finite type $3$ lies in the deep connections with KdV and 2D Zakharov--Kuznetsov equations. 
		
		Recall the KdV and 2D Zakharov--Kuznetsov equations:
		\begin{equation*}
			\textbf{KdV}: \quad \left\{
			\begin{aligned}
				& \partial_{t}u+\partial_{x}^{3}u=\partial_{x}(u^{2}),  \\
				& u(x,0) = u_{0}(x), \quad (x,t)\in\ \R\times \R,
			\end{aligned}
			\right.
		\end{equation*}
		\begin{equation*}
			\textbf{ 2D  Zakharov--Kuznetsov}:\quad \left\{
			\begin{aligned}
				& \partial_{t}u+\partial_{x_{1}}\Delta u=\partial_{x_{1}}(u^{2}),  \\
				& u(x,0) = u_{0}(x), \quad (x,t)\in\ \R^{2}\times \R.
			\end{aligned}
			\right.
		\end{equation*}
		By appropriately changing variables, restriction estimates for surfaces (resp. curves) of finite type $3$ can directly yield space--time homogeneous Strichartz estimates for 2D Zakharov --Kuznetsov equations (resp. KdV equations). Currently, the derivations of Strichartz estimates for those two equations are more or less based on the oscillatory integral theories and the Kenig, Gustavo Ponce, and Luis Vega's foundational work in \cite{57}. It is possible that deriving Strichartz estimates from restriction theories will produce more useful tools for studying dispersive equations. For more discussions on those two equations, we refer to \cite{30,59,60}.
		
		However, in this paper, we do not apply restriction estimates to the KdV or 2D Zakharov--Kuznetsov equations. In fact, we will apply our results to discrete nonlinear Schr\"{o}dinger equations, which will be introduced in the last section.
		 
		 \vspace{5pt}
		Now we give a modified definition for the functions of finite type (see \cite{8}), which will constitute our surfaces/curves of finite type.
		 \begin{defi}
		 	For $\varphi\in C^{\infty}([-1,1])$, we say that $\varphi$ is of finite type $n$, if 
		 	\begin{equation*}
		 		\varphi(0)=\varphi'(0)=\cdots=\varphi^{(n-1)}(0)=0\ne \varphi^{(n)}(0).
		 	\end{equation*}
		 	For convenience, we additionally assume that 
		 	\begin{equation*}
		 		1/2\le \varphi^{(n)}(0)\le 2, \quad \|\varphi^{(n+k)}\|_{C^{0}([-1,1])}\le 10^{-100}, \quad \forall k\ge 1.
		 	\end{equation*}
		 \end{defi}
		 
		And before we present our restriction estimates, we invoke and slightly restate Wang--Wu's restriction estimate for the perturbed paraboloid \cite{49}.
		\begin{thm}\label{WW}
		    Suppose $S:=\lbrace(\xi_{1},\xi_{2},\phi(\xi_{1},\xi_{2})):(\xi_{1},\xi_{2})\in [0,1]^{2}\rbrace$ is any compact, smooth surface in $\R^{3}$, satisfying 
		    \begin{equation*}
		    	\phi(0)=0, \quad \nabla \phi(0)=0, \quad D^{2}\phi(0)=
		    	\begin{bmatrix}
		    		1 & 0 \\
		    		0 & 1
		    	\end{bmatrix},
		    \end{equation*}
		    then for $p>22/7$,
		    we have a restriction estimate, for $C=C(p)>0$ independent of $\phi$,
		    \begin{equation*}
		    	\|E_{S} g\|_{L^{p}(\mathbb{R}^{3})}\le C\|g\|_{L^{p}({[0,1]^{2}})}.
		    \end{equation*}
		\end{thm}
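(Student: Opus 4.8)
The plan is to deduce Theorem~\ref{WW} from the main restriction estimate of \cite{49}, with the bulk of the work being to verify that the constant can be taken independent of $\phi$. I begin by reducing to a local estimate: by the $\varepsilon$-removal lemma (see \cite{48,51}) it is enough to establish, for every $\varepsilon>0$ and every $p>22/7$,
\begin{equation*}
	\|E_{S}g\|_{L^{p}(B_{R}^{3})}\le C_{\varepsilon}(p)\,R^{\varepsilon}\,\|g\|_{L^{p}([0,1]^{2})},\qquad R\ge 1,
\end{equation*}
with $C_{\varepsilon}(p)$ independent of $\phi$. Since $S$ is the graph of $\phi$, the density of $d\sigma$ equals $\sqrt{1+|\nabla\phi|^{2}}$, which lies in a fixed interval $[1,C_{0}]$ because $\|\nabla\phi\|_{C^{0}([0,1]^{2})}$ is controlled by the standing hypotheses; hence $\|g\|_{L^{p}([0,1]^{2})}$ and $\|g\|_{L^{p}(S,d\sigma)}$ are comparable with a universal constant, and one may freely pass between the two.

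Next I would normalize the geometry so as to match the setting of \cite{49}. As $\xi\mapsto D^{2}\phi(\xi)$ is continuous with $D^{2}\phi(0)=I$, and (using that the second fundamental form of $S$ is positive definite) has eigenvalues bounded below on $[0,1]^{2}$, there is $c_{0}>0$ depending only on the standing hypotheses so that $[0,1]^{2}$ is covered by $O(c_{0}^{-2})$ squares $Q$ of side $c_{0}$ on each of which $D^{2}\phi$ stays within a factor $2$ of a fixed positive definite matrix with eigenvalues in a fixed compact subset of $(0,\infty)$. Writing $g=\sum_{Q}g_{Q}$ with $g_{Q}$ the restriction of $g$ to $Q$, and using $\|E_{S}g\|_{L^{p}(B_{R})}\le\sum_{Q}\|E_{S}g_{Q}\|_{L^{p}(B_{R})}$ together with $\bigl(\sum_{Q}\|g_{Q}\|_{L^{p}}^{p}\bigr)^{1/p}\le\|g\|_{L^{p}}$, it suffices to treat $g$ supported in one such $Q$. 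On $Q$ an affine substitution $\xi=\xi_{Q}+A_{Q}\eta$, with $A_{Q}$ chosen so that the Hessian at the new origin becomes the identity, turns $S|_{Q}$ into the graph of some $\psi$ over a square of bounded size with $\psi(0)=0$, $\nabla\psi(0)=0$, $D^{2}\psi(0)=I$, positive definite second fundamental form, and $\|\psi\|_{C^{N}}$ bounded by an absolute constant for each fixed $N$; the extension operators of $S|_{Q}$ and of this graph are conjugate by an invertible affine substitution in the physical variable, altering each side of the inequality only by bounded Jacobian factors. This is exactly the perturbed paraboloid of \cite{49}. One subtlety to record here is the $L^{p}$ (rather than $L^{\infty}$) norm on the right: since $\|g\|_{L^{p}}\le\|g\|_{L^{\infty}}$ on the unit square, this is a genuinely stronger estimate than the $L^{\infty}\to L^{p}$ headline statement; I expect it is exactly what the proof of \cite{49} produces, the broad--narrow and refined-decoupling argument being run at the level of $L^{p}$-normalized single-scale estimates, but it is a point to verify rather than to obtain by interpolation, since interpolating the $L^{\infty}\to L^{p}$ bound with the trivial $L^{1}\to L^{\infty}$ and the Stein--Tomas $L^{2}\to L^{4}$ estimates only yields $L^{p}\to L^{p}$ for $p>25/7$.

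The main obstacle is the uniformity of $C$ in $\phi$, i.e.\ in $\psi$ after the above reduction. What must be checked is that every ingredient of \cite{49} --- the polynomial-partitioning scheme of \cite{50}, the $\ell^{2}$ decoupling theorem invoked as a black box, and the two-ends Furstenberg inequalities --- depends on the surface only through an upper bound for a fixed finite number of its derivatives together with a lower bound for its second fundamental form, and in no other way. This should indeed be the case, since those arguments are quantitative and stable under perturbations that are small in a sufficiently high $C^{N}$ norm; the delicate point is to confirm that the decoupling input is applied with constants uniform over the relevant class, which holds because $\ell^{2}$ decoupling for graphs with positive definite second fundamental form is stable under $C^{N}$-small perturbations. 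Granting this, the displayed local estimate holds with a constant independent of $\phi$, and $\varepsilon$-removal upgrades it to the stated global bound.
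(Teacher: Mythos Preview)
The paper does not prove this theorem at all: it is invoked as a known result from \cite{49}, and the only justification offered is the remark immediately following the statement, which asserts that Wang and Wu in \cite{49} already established precisely the $L^{p}\to L^{p}$ estimate for any compact $C^{2}$ surface with strictly positive second fundamental form, with the Hessian normalization $D^{2}\phi(0)=I$ imposed only to make the uniformity explicit. So your elaborate reduction---$\varepsilon$-removal, covering by small squares, affine normalization, and then a line-by-line audit of the polynomial partitioning, refined decoupling, and two-ends ingredients of \cite{49} for uniformity---is not what the paper does; the paper simply takes the result as a black box.

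That said, your proposal is a reasonable sketch of how one would actually justify citing \cite{49} in this form, and your main concern (whether \cite{49} delivers $L^{p}\to L^{p}$ rather than only $L^{\infty}\to L^{p}$) is directly addressed by the paper's remark: it claims Wang--Wu prove the $L^{p}\to L^{p}$ version outright, so no interpolation is needed. One genuine gap in your argument is that the hypotheses as stated (only $\phi(0)=0$, $\nabla\phi(0)=0$, $D^{2}\phi(0)=I$, smoothness) do not by themselves guarantee that the second fundamental form stays positive definite on all of $[0,1]^{2}$, nor that $\|\phi\|_{C^{N}}$ is uniformly bounded---both of which you silently invoke. The paper's remark makes clear these are implicitly assumed (``strictly positive second fundamental form''), but your write-up should either state them explicitly or note that the theorem is really a local statement near the origin.
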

		\begin{rem}
			In \cite{49}, Wang and Wu actually proved the above $L^{p}\to L^{p}$ estimate for any compact $C^{2}$ surface with a strictly positive second fundamental form.  To capture the uniformity in the estimate, we impose a redundant condition on the Hessian. By scaling and rotation, we can apply this theorem to surfaces with strictly positive second fundamental forms. 
		\end{rem}
	
		Our main theorem is stated as follows.
		\begin{thm}\label{main}
			Let $\varphi_{1}, \varphi_{2}$ be of finite type $3$, then for $S:=\lbrace(\xi_{1},\xi_{2},\varphi_{1}(\xi_{1})+\varphi_{2}(\xi_{2})):(\xi_{1},\xi_{2})\in [0,1]^{2}\rbrace$, we have the local restriction estimate
			\begin{equation*}
				\left\|E_{S} g\right\|_{L^{22/7}(B_{R}^{3})}\lesssim_{\varepsilon} R^{\varepsilon}\|g\|_{L^{\infty}([0,1]^{2})}, \quad \forall \varepsilon>0.
			\end{equation*}
		\end{thm}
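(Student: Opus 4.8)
The plan is to run the rescaling scheme of \cite{15}: decompose $[0,1]^2$ into a central square together with dyadic rectangles, on each of which $S$ becomes, after an affine rescaling, a perturbed paraboloid (or a perturbed hyperboloid on the pieces where the Hessian is indefinite, as occurs in the $\pm$ and $[-1,1]^2$ formulations), and then invoke Theorem~\ref{WW} (resp.\ Demeter--Wu). Since $\varphi_i$ is of finite type $3$ we have $\varphi_i''(\xi_i)\sim\xi_i$, so the Gaussian curvature of $S$ is $\sim\xi_1\xi_2$ and degenerates only on the axes. First dispose of $Q_*:=[0,R^{-1/3}]^2$: there $|x_3\phi(\xi)|\lesssim1$ on $B_R^3$, so non-stationary phase in $(\xi_1,\xi_2)$ gives $|E_S(g\mathbf 1_{Q_*})(x)|\lesssim R^{-2/3}(1+R^{-1/3}|(x_1,x_2)|)^{-N}\|g\|_{L^\infty}$, whence $\|E_S(g\mathbf 1_{Q_*})\|_{L^{22/7}(B_R^3)}\lesssim R^{-3/22}\|g\|_{L^\infty}$. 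It remains to treat the rectangles $\mathcal{R}_{j,k}:=\{\xi_1\sim2^{-j},\ \xi_2\sim2^{-k}\}$ with $0\le j,k\lesssim\tfrac13\log R$.

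On the \emph{balanced} rectangles ($|j-k|\le C$), the isotropic rescaling $\xi=2^{-j}\eta+(\text{corner})$ together with the dilation $x_3\mapsto 2^{-3j}x_3$ sends $S|_{\mathcal{R}_{j,k}}$ to a graph over $[0,1]^2$ with Hessian $\sim\mathrm{Id}$ and all higher derivatives uniformly bounded; Theorem~\ref{WW} then applies for every $p>22/7$, uniformly. Tracking Jacobians yields $\|E_S(g\mathbf 1_{\mathcal{R}_{j,k}})\|_{L^p(\R^3)}\lesssim 2^{(-2+5/p)j}\|g\|_{L^\infty}$, and since $-2+5/p<0$ there, the sum over $j$ costs only $O(1)$; H\"{o}lder on $B_R^3$ converts this into an $L^{22/7}(B_R^3)$ bound at the cost of $R^{O(p-22/7)}$, which is $\le R^\varepsilon$ once $p$ is chosen close enough to $22/7$.

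The unbalanced rectangles are the heart of the matter; say $j<k$. Partition $\mathcal{R}_{j,k}$ along the more curved variable $\xi_1$ into $N:=2^{3(k-j)/2}$ congruent sub-rectangles $Q_1,\dots,Q_N$. On each $Q_m$ the \emph{anisotropic} affine map $(\xi_1,\xi_2)=(a\eta_1+p_m,\,2^{-k}\eta_2+q)$ with $a=2^{(j-3k)/2}$, together with $x_3\mapsto 2^{-3k}x_3$, produces a graph over $[0,1]^2$ with Hessian $\sim\mathrm{Id}$ and uniformly controlled higher derivatives (the constraints $a\le2^{-j}$ and $2^{3k}a^3=1/N\le1$ are exactly what keep the rescaled phase uniformly smooth); Theorem~\ref{WW} (or Demeter--Wu) then gives $\|E_S(g\mathbf 1_{Q_m})\|_{L^p(\R^3)}\lesssim 2^{e(j,k,p)}\|g\|_{L^\infty}$ with $e(j,k,p)=\tfrac12(j-3k)(1-\tfrac1p)+k(\tfrac4p-1)$. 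To reassemble the $Q_m$ one must beat the triangle inequality: the Fourier supports of the $E_S(g\mathbf 1_{Q_m})$ are the $N$ standard caps (of dimensions $\delta\times\delta^2$, $\delta=1/N$) of the finite-type-$3$ curve $\gamma_1=\{(\xi_1,\varphi_1(\xi_1))\}$, extruded in $\xi_2$ by a thickness $\sim2^{-3k}$ that exactly matches the cap thickness, so cylindrical $\ell^2$-decoupling for the parabola --- valid since $22/7\in[2,6]$, and applicable on $B_R^3$ since $2^{3k}\lesssim R$ --- yields
\begin{equation*}
\Big\|\sum_m E_S(g\mathbf 1_{Q_m})\Big\|_{L^{22/7}(B_R^3)}\lesssim_\varepsilon R^{\varepsilon}\Big(\sum_m\big\|E_S(g\mathbf 1_{Q_m})\big\|_{L^{22/7}(\R^3)}^2\Big)^{1/2}\lesssim_\varepsilon R^{\varepsilon}\,N^{1/2}2^{e(j,k,22/7)}\|g\|_{L^\infty}.
\end{equation*}
A direct computation gives $N^{1/2}2^{e(j,k,22/7)}=2^{-9j/22}\le1$; in fact the coefficient of $k$ in the exponent of $N^{1/2}2^{e(j,k,p)}$ equals $-\tfrac74+\tfrac{11}{2p}$, which vanishes precisely at $p=22/7$, and this is what forces the exponent appearing in the theorem. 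Summing over all $j<k$ (and the symmetric range $k<j$) with $j,k\lesssim\tfrac13\log R$ costs only $(\log R)^{O(1)}\lesssim_\varepsilon R^\varepsilon$; together with the balanced rectangles and $Q_*$ this proves the theorem.

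I expect the main obstacle to be the unbalanced case, in two places: (a) making the anisotropic rescaling honest --- one has to verify that each rescaled $Q_m$ genuinely satisfies the uniform hypotheses of Theorem~\ref{WW} / Demeter--Wu, controlling \emph{all} derivatives of the rescaled phase uniformly in $(j,k)$, not merely the Hessian; and (b) the decoupling step, where one must check that the $N$ extruded caps really form a cylindrical family of genuine parabola caps at a single scale, so that $\ell^2$-decoupling applies and the resulting gain $N^{1/2}$ --- rather than the wasteful factor $N$ of the triangle inequality, which would only suffice for $p\ge 11/2$ --- is \emph{exactly} enough to sum at $p=22/7$. In the $\pm$ and $[-1,1]^2$ versions the only modification is that on sub-rectangles lying over a region where $\varphi_1''$ and $\varphi_2''$ have opposite signs the rescaled surface is a perturbed hyperboloid and one invokes Demeter--Wu in place of Theorem~\ref{WW}; everything else is unchanged.
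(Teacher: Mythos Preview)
Your approach is correct and uses the same rescaling philosophy as the paper, but organizes the argument differently. The paper runs a two-layered induction on scale: it splits $[0,1]^2$ at a single intermediate scale $K$ into $\Omega_0\cup\Omega_1\cup\Omega_2\cup\Omega_3$, treats the corner $\Omega_3=[0,K^{-1/3}]^2$ by induction, and on each strip $\Omega_\lambda\subset\Omega_1$ it decouples in $\xi_1$ only and then rescales to a \emph{mixed} surface (type $2$ in $\eta_1$, still type $3$ in $\eta_2$), which is handled by a second, nested induction (Proposition~\ref{23}). You instead unroll both inductions at once: by taking the full double-dyadic decomposition $\{\mathcal R_{j,k}\}$ and, on the unbalanced rectangles, subdividing in the more-curved variable until each $Q_m$ rescales \emph{directly} to a perturbed paraboloid, you bypass the mixed-type surface and the auxiliary proposition entirely. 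Your observation that the coefficient of $k$ in $N^{1/2}2^{e(j,k,p)}$ vanishes precisely at $p=22/7$ is exactly the computation behind the paper's Remark following Proposition~\ref{23}. The paper's formulation is more modular (the same template recurs for other finite types, and it isolates Proposition~\ref{23} as a result of independent interest, used later in Remark~\ref{32}); your formulation is more transparent about where the critical exponent comes from and avoids one level of nesting.

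Two technical points to tighten. First, your $Q_*$ argument as written invokes ``non-stationary phase'' to get the pointwise decay $(1+R^{-1/3}|(x_1,x_2)|)^{-N}$, but integration by parts is unavailable since $g$ is merely $L^\infty$; the bound $\|E_S(g\mathbf 1_{Q_*})\|_{L^{22/7}(B_R^3)}\lesssim R^{-3/22}\|g\|_{L^\infty}$ is nonetheless correct --- apply Hausdorff--Young slice by slice in $x_3$ to the amplitude $e^{ix_3\phi}g\mathbf 1_{Q_*}$, which gives $\|E_S(g\mathbf 1_{Q_*})(\cdot,\cdot,x_3)\|_{L^{22/7}(\R^2)}\le|Q_*|^{15/22}\|g\|_{L^\infty}$ uniformly in $x_3$. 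Second, in the unbalanced step you write $\|E_S(g\mathbf 1_{Q_m})\|_{L^{22/7}(\R^3)}\lesssim2^{e(j,k,22/7)}\|g\|_{L^\infty}$, but Theorem~\ref{WW} is only stated for $p>22/7$ and gives no global $L^{22/7}(\R^3)$ bound; the fix is to keep the decoupling right-hand side on $B_R^3$ (as the paper does via parallel decoupling, Lemma~\ref{parallel}) and use the \emph{local} $L^{22/7}$ estimate for the rescaled paraboloid with an extra $R^\varepsilon$ loss, or else run the whole unbalanced computation at $p$ slightly above $22/7$ and close with H\"older as you already do for the balanced pieces. Neither change disturbs your critical-exponent calculation.
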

		\begin{rem}
			To prove this theorem, we will follow the rescaling technique developed in \cite{15} to reduce our discussions to Wang--Wu's result. For brevity, we sometimes refer to such $S$ as the surface of finite type $3$ or finite-type $3$ surface. 
		\end{rem}
		\begin{rem}
			The prototypical example is $S:=\lbrace(\xi_{1},\xi_{2},\xi_{1}^{3}+\xi_{2}^{3}):(\xi_{1},\xi_{2})\in [0,1]^{2}\rbrace$. To visualize our new range of restriction estimate, we refer to  Figure 1. Applying the $\varepsilon$--removal lemma (see \cite{4,5}), we obtain the restriction estimate (\ref{extension}) for ($\frac{1}{q},\frac{1}{p}$) on the segment $\overline{AC}$ (excluding the point $A$). In \cite{6}, Buschenhenke, M\"{u}ller, and Vargas proved the case when ($\frac{1}{q},\frac{1}{p}$) lies in the trapezoid BCDE (excluding the segments $\overline{BE}$ and $\overline{EI}$) using bilinear estimates. Then, by interpolation, we can establish the case in quadrilateral ACDE (excluding the segments $\overline{AE}$ and $\overline{EI}$). Besides, the conjectured range for the prototypical case is the trapezoid GCDF (excluding the segment $\overline{GF}$).
		\end{rem}
	    In order to apply the restriction estimate to dispersive equations, we actually need the restriction estimate on $[-1,1]^{2}$, not only on $[0,1]^{2}$. By symmetry, we only need to establish the ``hyperbolic" case  $S:=\lbrace(\xi_{1},\xi_{2},\varphi_{1}(\xi_{1})-\varphi_{2}(\xi_{2})):(\xi_{1},\xi_{2})\in [0,1]^{2}\rbrace$. Another important motivation for studying this case is that, in \cite{16}, Schwend and Stovall proved the restriction
	    conjecture for degenerate hypersurfaces $S$ be of the following form:
	    \begin{equation*}
	    	S:=\lbrace (\xi_{1},\xi_{2},\cdots,\xi_{d-1},\sum_{j=1}^{d-1}|\xi_{j}|^{\beta_{j}}):(\xi_{1},\xi_{2},\cdots,\xi_{d-1})\in [0,1]^{d-1}\rbrace, \quad \forall \beta_{j}>1, \;1\le j\le d-1,
	    \end{equation*}
	    by assuming Conjecture \ref{conj1}. Thus, it is of great interest to study the ``hyperbolic" case. However, this case is more difficult than the previous one, as the two principal curvatures have different signs. To overcome this difficulty, we invoke a hyperbolic analogue of Theorem~\ref{WW}, established independently by Buschenhenke, M\"{u}ller, and Vargas, and by Guo and Oh (see \cite{8,66}).
	    \begin{thm}\label{Hyper}
	    	 Suppose $S:=\lbrace(\xi_{1},\xi_{2},\phi(\xi_{1},\xi_{2})):(\xi_{1},\xi_{2})\in [0,1]^{2}\rbrace$ is any compact, smooth surface in $\R^{3}$, satisfying 
	    	\begin{equation*}
	    		\phi(0)=0, \quad \nabla \phi(0)=0, \quad D^{2}\phi(0)=
	    		\begin{bmatrix}
	    			0 & 1 \\
	    			1 & 0
	    		\end{bmatrix},
	    	\end{equation*}
	    	then for $p>13/4$, $p>2q'$,
	    	we have a restriction estimate, for $C=C(p)>0$ independent of $\phi$,
	    	\begin{equation*}
	    		\|E_{S} g\|_{L^{p}(\mathbb{R}^{3})}\le C\|g\|_{L^{q}({[0,1]^{2}})}.
	    	\end{equation*}
	    \end{thm}
	    \begin{rem}
	    	Still, by scaling and rotation, we can apply this theorem to surfaces with a strictly negative curvature. 
	    \end{rem}
	    Using this result and the rescaling techniques, we can recover Theorem \ref{main}.
	    \begin{thm}\label{main2}
	    Let $\varphi_{1}, \varphi_{2}$ be of finite type $3$, then for $S:=\lbrace(\xi_{1},\xi_{2},\varphi_{1}(\xi_{1})-\varphi_{2}(\xi_{2})):(\xi_{1},\xi_{2})\in [0,1]^{2}\rbrace$, $\frac{3}{10}\le \frac{1}{p}<\frac{4}{13}$, $\frac{11}{2p}+\frac{3}{4}\le\frac{5}{2q'}$, we have the local restriction estimate
	    \begin{equation*}
	    	\left\|E_{S} g\right\|_{L^{p}(B_{R}^{3})}\lesssim_{\varepsilon} R^{\varepsilon}\|g\|_{L^{q}([0,1]^{2})}, \quad \forall \varepsilon>0.
	    \end{equation*}
	    \end{thm}
	    \begin{rem}
	    Naturally, the prototypical example this time is $S:=\lbrace(\xi_{1},\xi_{2},\xi_{1}^{3}-\xi_{2}^{3}):(\xi_{1},\xi_{2})\in [0,1]^{2}\rbrace$.
	    It is also worth mentioning that we can only obtain the estimates for surfaces such as $\lbrace(\xi_{1},\xi_{2},\xi_{1}^{m_{1}}+\xi_{2}^{m_{2}}):(\xi_{1},\xi_{2})\in[0,1]^{2}\rbrace$ from \cite{6}. Therefore, at present, we have the Tomas--Stein type estimate (the point H in Figure 2) and the trivial $L^{1}\to L^{\infty}$ bound (the point D). Now, by interpolation, the range of $(\frac{1}{q},\frac{1}{p})$ shrinks to the pentagon ACDHK (excluding the segments $\overline{AK}$, $\overline{KH}$). The Tomas--Stein type estimate can be derived from oscillatory integrals, which were extensively studied in \cite{9,10,11}.
	    \end{rem}
		\begin{figure}
			\centering
			\includegraphics[width=0.8\linewidth]{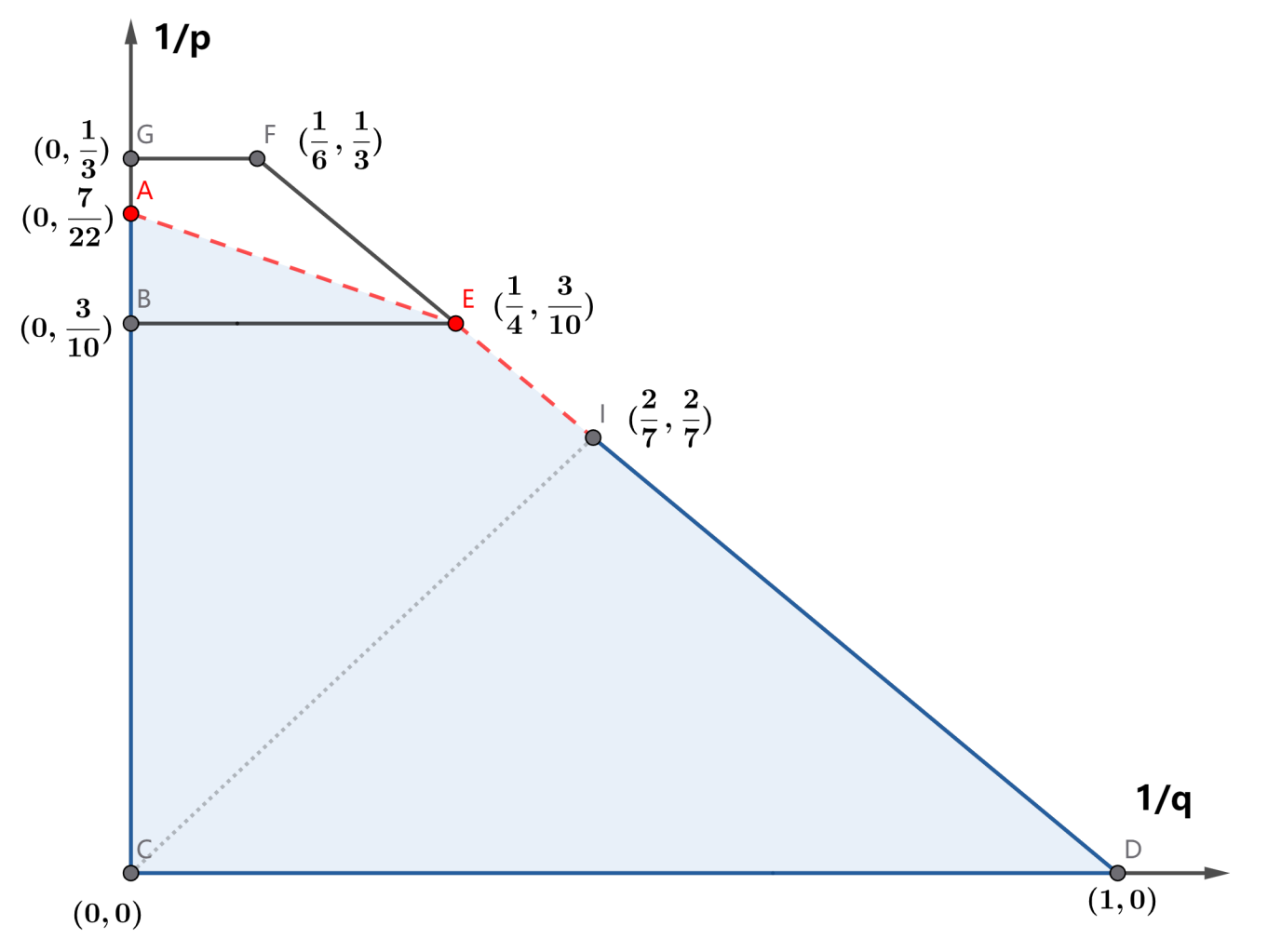}
			\caption{Range for $\varphi_{1}(\xi_{1})+\varphi_{2}(\xi_{2})$}
			\label{fig:restrictionrange1}
		\end{figure}

		\begin{figure}
			\centering
			\includegraphics[width=0.8\linewidth]{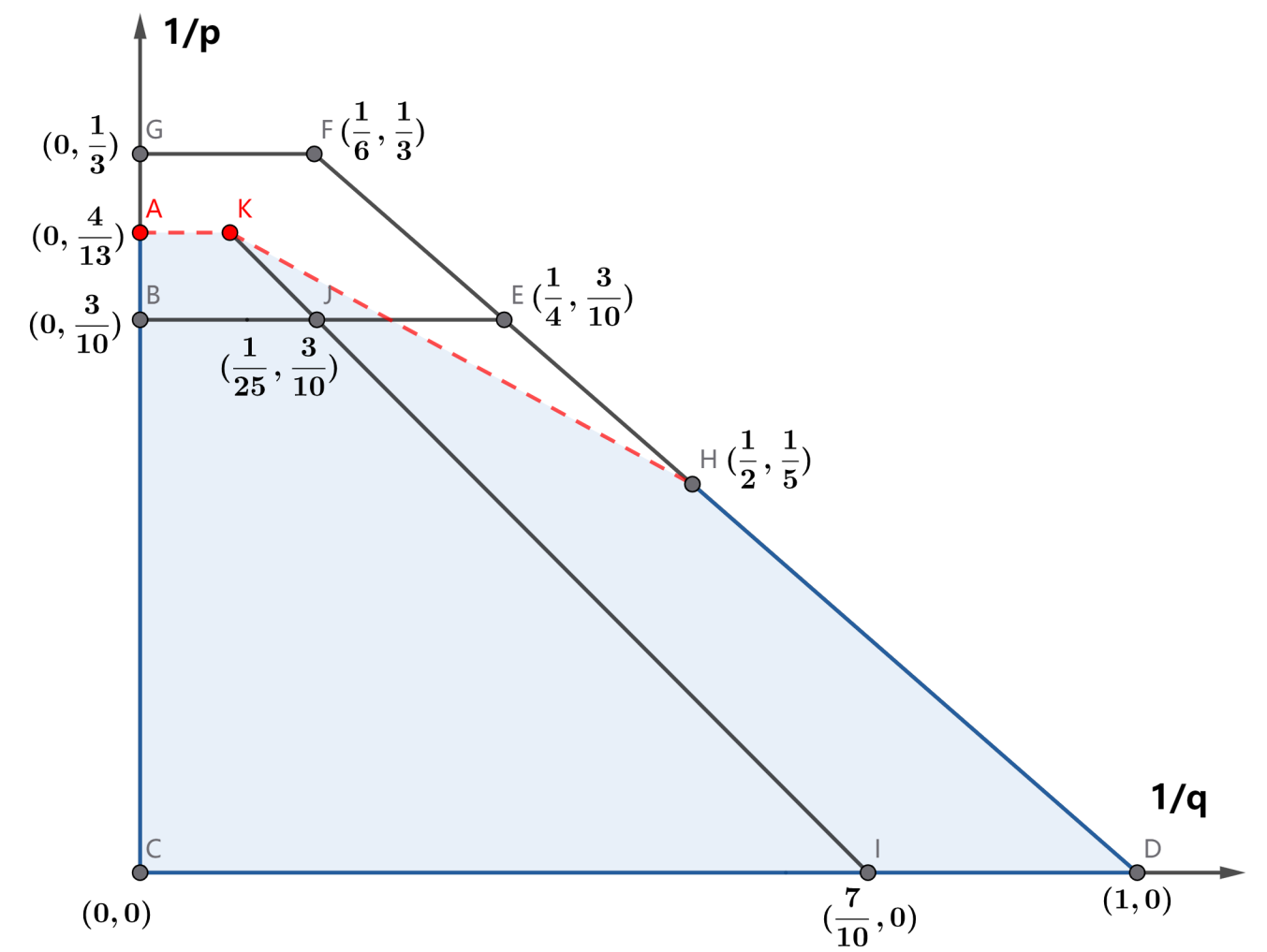}
			\caption{Range for $\varphi_{1}(\xi_{1})-\varphi_{2}(\xi_{2})$}
			\label{fig:restrictionrange2}
		\end{figure}

		For later application in dispersive equations, we recall the following sharp restriction estimate for curves of finite type $3$ (see \cite{61,63}).
		\begin{thm}\label{2D}
			Let $\varphi$ be of finite type $3$, then for curve $\gamma:=\lbrace(t,\varphi(t)):t\in[-1,1]\rbrace$, $0\le \frac{1}{p}<\frac{1}{4}$, $\frac{1}{q}+\frac{4}{p}\le1$, we have the sharp restriction estimate
			\begin{equation*}
				\|E_{\gamma} g\|_{L^{p}(\R^{2})}\lesssim\|g\|_{L^{q}([-1,1])}, \quad \forall\varepsilon>0.
			\end{equation*}
		\end{thm}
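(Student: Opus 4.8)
The plan is to isolate the unique degenerate point $t=0$ by a dyadic decomposition, use the parabolic rescaling of \cite{15} to reduce each dyadic piece to the classical sharp restriction estimate for planar curves of non-vanishing curvature (\cite{40,41}), and then reassemble. After an affine change of the frequency variable we may assume $\varphi''$ vanishes only at $t=0$ on $[-1,1]$. Write $[-1,1]\setminus\{0\}=\bigcup_{j\ge 0}(I_j^{+}\cup I_j^{-})$, $I_j^{\pm}:=\pm[2^{-j-1},2^{-j}]$, and $g=\sum_{j,\pm}g_j^{\pm}$ with $g_j^{\pm}:=g\,\mathbf{1}_{I_j^{\pm}}$. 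On $I_j^{+}$ substitute $t=2^{-j}s$, $s\in[1/2,1]$, and put $\psi_j(s):=2^{3j}\varphi(2^{-j}s)$; a Taylor expansion together with the finite-type-$3$ normalisation gives $\psi_j(s)=\tfrac{\varphi'''(0)}{6}s^{3}+O(2^{-j})$ in $C^{k}([1/2,1])$ for every $k$, uniformly in $j$, so the arcs $\gamma_j:=\{(s,\psi_j(s)):s\in[1/2,1]\}$ have curvature bounded above and below away from $0$ uniformly in $j$. Since $x_1 2^{-j}s+x_2\varphi(2^{-j}s)=y_1 s+y_2\psi_j(s)$ for $y=(2^{-j}x_1,2^{-3j}x_2)$ and $\mathrm{d}x=2^{4j}\,\mathrm{d}y$, one gets the exact identity $\|E_{\gamma}g_j^{\pm}\|_{L^{p}(\R^{2})}=2^{\,j(4/p-1)}\|E_{\gamma_j}[g(2^{-j}\cdot)]\|_{L^{p}(\R^{2})}$. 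Combining this with the uniform planar estimate $\|E_{\gamma_j}h\|_{L^{p}(\R^{2})}\le C\|h\|_{L^{q}([1/2,1])}$ for $p>4$, $\tfrac1q+\tfrac3p\le1$, and with $\|g(2^{-j}\cdot)\|_{L^{q}([1/2,1])}=2^{j/q}\|g\|_{L^{q}(I_j^{\pm})}$, we obtain the uniform per-piece bound
\[
\|E_{\gamma}g_j^{\pm}\|_{L^{p}(\R^{2})}\;\le\; C\,2^{\,j\left(\frac4p+\frac1q-1\right)}\,\|g\|_{L^{q}(I_j^{\pm})};
\]
note the hypothesis $\tfrac1q+\tfrac4p\le1$ forces $\tfrac1q+\tfrac3p\le1-\tfrac1p<1$, so we stay strictly inside the admissible range of the planar estimate.

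If $\tfrac1q+\tfrac4p<1$, then $\delta:=1-\tfrac4p-\tfrac1q>0$, and summing by the triangle inequality and then H\"{o}lder's inequality in the index $(j,\pm)$ (using $\sum_j 2^{-\delta q'j}<\infty$) gives $\|E_{\gamma}g\|_{L^{p}(\R^{2})}\lesssim\|g\|_{L^{q}([-1,1])}$; this already yields the strict interior of the claimed range. The main obstacle is the critical line $\tfrac1q+\tfrac4p=1$, $p>4$, where each piece contributes with no gain in $j$ and the triangle inequality is too lossy once $q>1$. Here I would exploit that $E_{\gamma}g_j^{\pm}$ has its spatial Fourier transform supported over the dyadic---hence lacunary---frequency strip $\{\xi:\pm\xi_1\in[2^{-j-1},2^{-j}]\}$: by the Littlewood--Paley inequality $\|E_{\gamma}g\|_{L^{p}}\sim\big\|\big(\sum_{j,\pm}|E_{\gamma}g_j^{\pm}|^{2}\big)^{1/2}\big\|_{L^{p}}$, and since $p\ge4$, Minkowski's inequality in $L^{p/2}$ bounds the right-hand side by $\big(\sum_{j,\pm}\|E_{\gamma}g_j^{\pm}\|_{L^{p}}^{2}\big)^{1/2}\le C\big(\sum_{j,\pm}\|g\|_{L^{q}(I_j^{\pm})}^{2}\big)^{1/2}$; when $q\le 2$---equivalently $p\ge 8$ on the critical line---this $\ell^{2}$ sum is dominated by $\big(\sum_{j,\pm}\|g\|_{L^{q}(I_j^{\pm})}^{q}\big)^{1/q}=\|g\|_{L^{q}([-1,1])}$, closing that part of the critical line (for $p=8$, $q=2$ this is the classical $L^{8}_{t,x}$-Strichartz estimate for the Airy equation).

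The remaining segment $4<p<8$ of the critical line is the essential difficulty, and it is handled in \cite{61,63}: the square-function bound no longer suffices there, and one must use an $\ell^{2}$-valued (Marcinkiewicz--Zygmund-type) refinement of the planar restriction estimate along the family $\gamma_j$, or pass to the affine arc-length measure $|\varphi''|^{1/3}\,\mathrm{d}t$---whose analogous restriction estimate holds up to and including its endpoint line---and convert back to $L^{q}(\mathrm{d}t)$ via a delicate H\"{o}lder/Lorentz argument; this borderline summation is the technical heart of the theorem. (If one only needs the weaker local form $\|E_{\gamma}g\|_{L^{p}(B_R^{2})}\lesssim_{\varepsilon}R^{\varepsilon}\|g\|_{L^{q}}$, even the critical line is immediate, since then only the scales $j\lesssim\log R$ contribute to $B_R^{2}$ and the $(\log R)^{1/q'}$ loss from the triangle-inequality summation is absorbed by $R^{\varepsilon}$.)
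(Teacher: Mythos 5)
Your argument is correct in what it establishes, and it takes a genuinely different route from the paper's. The paper only decomposes $[-1,1]$ down to the fixed scale $K^{-1/3}$: the central interval $\Omega_{0}=[-K^{-1/3},K^{-1/3}]$ is handled by induction on scale (rescaling the finite-type-$3$ curve back to itself on a ball of radius $R/K^{1/3}$), the finitely many dyadic pieces are rescaled to nondegenerate arcs, and everything is fed into a recursion for the local constant $Q(R)$, which closes precisely because the exponent $-\frac{1}{3q'}+\frac{4}{3p}\le 0$ (your condition $\frac1q+\frac4p\le1$); this yields only the local bound $\|E_{\gamma}g\|_{L^{p}(B_{R}^{2})}\lesssim_{\varepsilon}R^{\varepsilon}\|g\|_{L^{q}}$, which the paper presents as a short proof of the non-endpoint case, the sharp statement being quoted from \cite{61,63}. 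You instead decompose all the way down to $t=0$, use the exact per-piece scaling identity together with the Fefferman--Zygmund estimate applied uniformly to the arcs $\gamma_{j}$ (uniformity is indeed available here: the paper's normalization of finite type $3$ forces $\psi_{j}\to\frac{\varphi'''(0)}{6}s^{3}$ in every $C^{k}$, so two-sided curvature and $C^{3}$ bounds are uniform in $j$), and then sum: H\"older in $j$ gives the \emph{global}, $\varepsilon$-free estimate in the open region $\frac1q+\frac4p<1$, and the Littlewood--Paley/Minkowski/$\ell^{q}\subseteq\ell^{2}$ step settles the critical line globally for $q\le2$, i.e.\ $p\ge8$. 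So your route proves strictly more than the paper's own argument does, while deferring to \cite{61,63} exactly what the paper also defers --- and in your case only the $4<p<8$ portion of the segment $\overline{BC}$, which you correctly identify as the real difficulty. Two minor points: the square-function step uses sharp cutoffs, which is fine because the projections are genuine one-dimensional dyadic blocks in the $\xi_{1}$-variable; and your parenthetical claim that only scales $j\lesssim\log R$ ``contribute'' on $B_{R}^{2}$ is loose --- all scales contribute, but the tail $\bigcup_{j\ge C\log R}I_{j}^{\pm}$ is a single short interval that is disposed of trivially via $\|E_{\gamma}h\|_{L^{\infty}}\le\|h\|_{L^{1}}$ and the factor $|B_{R}^{2}|^{1/p}$, after which the $O(\log R)$ remaining pieces cost only the $R^{\varepsilon}$ you allow; with that small repair the local critical-line remark is also fine.
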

		\begin{rem}
			The prototypical example is $\gamma:=\lbrace(t,t^{3}):t\in[-1,1]\rbrace$. The range is the triangle BDC in Figure 3 (excluding the point B). As a simple application, we can use the rescaling technique to give a short proof of the non--endpoint case, i.e., for ($\frac{1}{q},\frac{1}{p}$) not on the segment $\overline{BC}$.
			
		\end{rem}
		\begin{figure}
			\centering
			\includegraphics[width=0.7\linewidth]{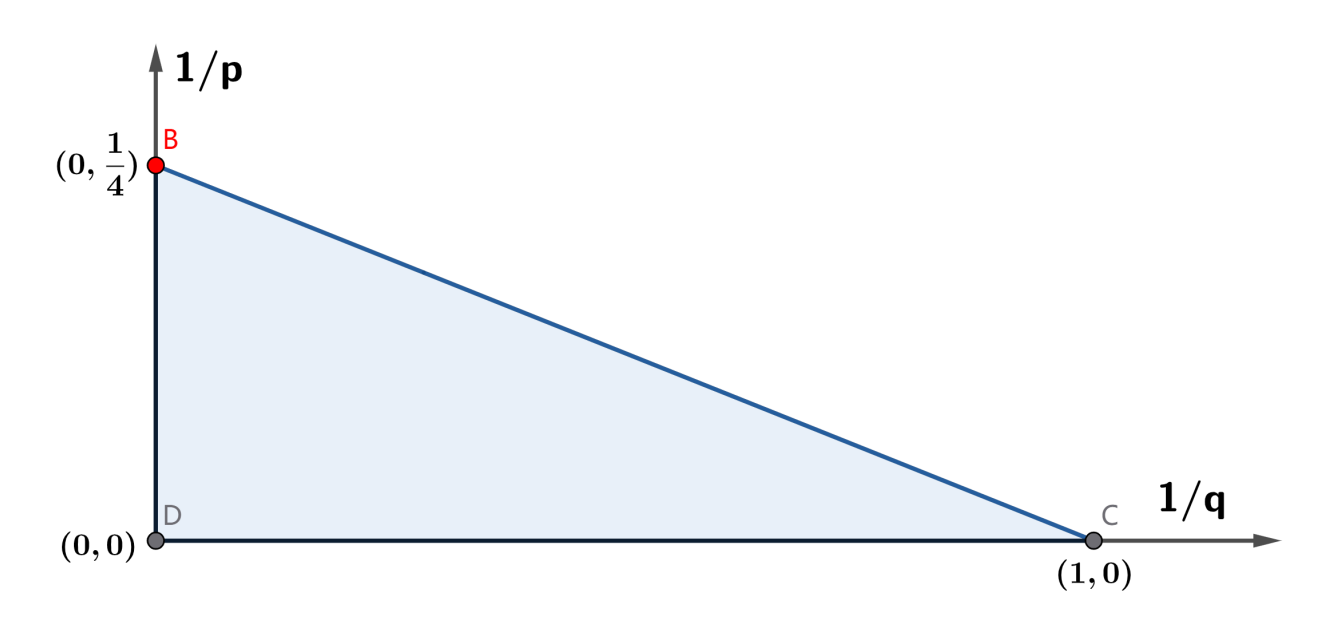}
			\caption{Range for $\varphi(t)$}
			\label{fig:2drange}
		\end{figure}
		
		From now on, we assume that $S:=\lbrace(\xi_{1},\xi_{2},\xi_{1}^{3}+\xi_{2}^{3}):(\xi_{1},\xi_{2})\in [0,1]^{2}\rbrace$, and write $E_{S}$ as $E$ for brevity.  We will see that the proof for the general case or the ``hyperbolic" case is parallel.
		
		Sometimes, we also write $E_{\Theta}$ for some subset $\Theta\subseteq [0,1]^{2}$ to emphasize that the restriction is now on $\lbrace(\xi_{1},\xi_{2},\xi_{1}^{3}+\xi_{2}^{3}):(\xi_{1},\xi_{2})\in \Theta\rbrace$.
        \vspace{7pt}
        
        Now we present the outline of the proof. Note that the finite-type 3 surfaces degenerate only at $\xi_{1}=0$ and $\xi_{2}=0$, we decompose the whole region $[0,1]^{2}$ into four pieces (see Figure 4), where $K$ is a fixed large constant, i.e., $1\ll K\ll R^{\varepsilon}$.

        Next, we will perform the induction on scale. Let $Q(R)$ be the optimal constant such that 
        	\begin{equation}\label{n}
        		\|Eg\|_{L^{22/7}(B_{R}^{3})}
        		\le Q(R)\|g\|_{L^{\infty}([0,1]^{2})}, \quad \forall g\in L^{\infty}([0,1]^{2}).
        	\end{equation}
         We will show, for some constant $C(\varepsilon)>0$, $Q(R)\le C(\varepsilon)R^{\varepsilon}, \forall \varepsilon>0, R>1$. From H\"{o}lder's inequality, the statement is obviously true for $R\lesssim 1$, then we can assume that $Q(R')\le C(\varepsilon)(R')^{\varepsilon}$, $\forall R'<R/2$.

        \begin{figure}
        	\centering
        	\includegraphics[width=0.7\linewidth]{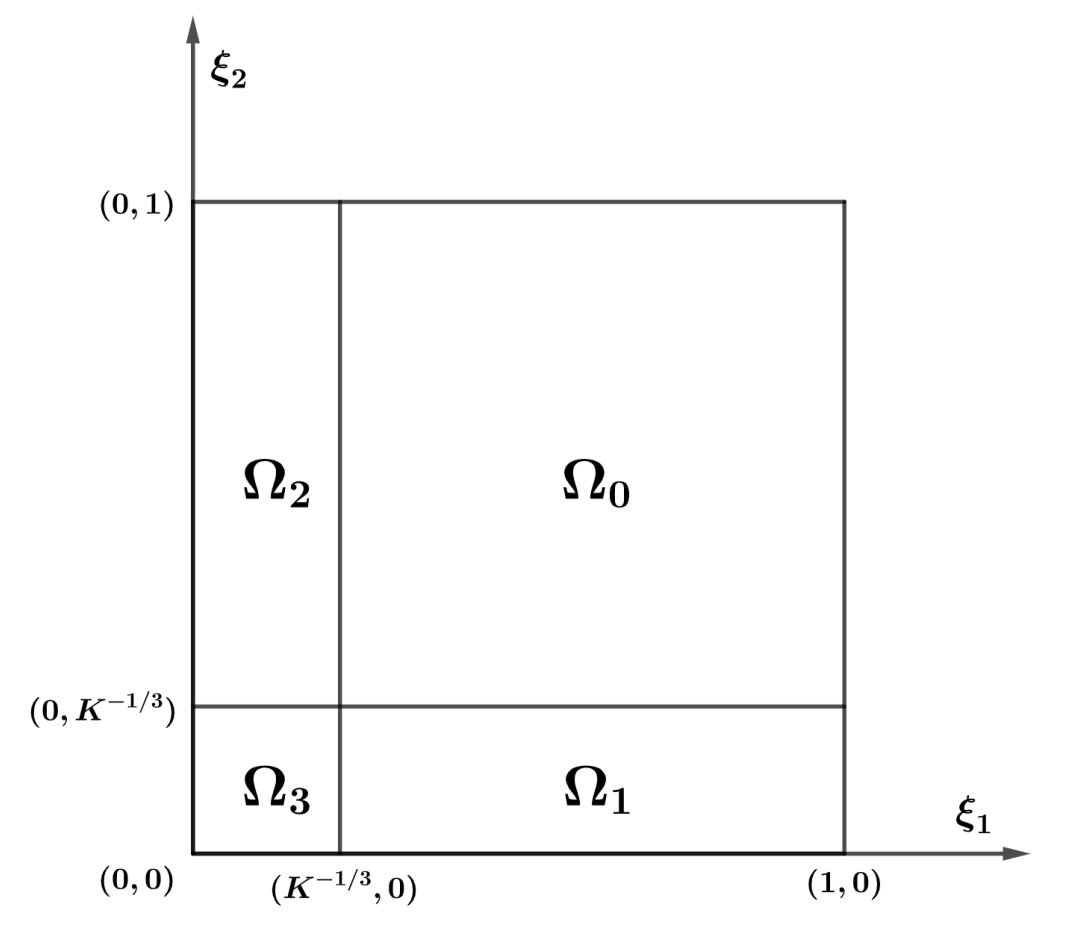}
        	\caption{}
        	\label{fig:omega}
        \end{figure}

        For $\Omega_{0}$, we can directly use Theorem \ref{WW} or Theorem \ref{Hyper} to derive 
        \begin{equation*}
        	\|E g_{\Omega_{0}}\|_{L^{22/7}(B_{R}^{3})}\le C(K)A(\varepsilon)R^{\varepsilon}\|g\|_{L^{\infty}([0,1]^{2})}, \quad g_{\Omega_{0}}:=g\chi_{\Omega_{0}},
        \end{equation*}
        where $C(K)>0$ (resp. $A(\varepsilon)>0$) depends only on $K$ (resp. $\varepsilon$).
        
        For $\Omega_{3}$, we can use the induction on scale to obtain
        \begin{equation*}
        	\|Eg_{\Omega_{3}}\|_{L^{22/7}(B_{R}^{3})}\le C K^{-3/22}Q\left(\frac{R}{K^{1/3}}\right)\|g\|_{L^{\infty}([0,1]^{2})}, \quad g_{\Omega_{3}}:=g\chi_{\Omega_{3}}.
        \end{equation*}
        For the remaining cases, we only need to estimate $\Omega_{1}$ by symmetry. 
        
        We dyadically decompose $\Omega_{1}$ into finer pieces as follows.
        
        	\begin{equation*}
        		\Omega_{1}:=\bigcup_{\lambda}\Omega_{\lambda}, \quad \Omega_{\lambda}:=[\lambda,2\lambda]\times [0,K^{-1/3}],
        	\end{equation*}
        	where $\lambda=2^{j-1}K^{-1/3}$, $1\le j\le \left[\frac{1}{3}\log_{2}(K)\right]$.
        
        In the next section, we will obtain the following estimate:
        \begin{equation*}
        	\|E g_{\Omega_{\lambda}}\|_{L^{22/7}(B_{R}^{3})}\le C(K)A(\varepsilon)R^{\varepsilon}\|g\|_{L^{\infty}([0,1]^{2})}, \quad g_{\Omega_{\lambda}}:=g\chi_{\Omega_{\lambda}}.
        \end{equation*}
        
        Combining the above estimates, we derive that
        \begin{equation*}
        	Q(R)\le C(K)A(\varepsilon)R^{\varepsilon}+CK^{-3/22}Q\left(\frac{R}{K^{1/3}}\right).
        \end{equation*}
        Then, from the induction hypothesis, we have $Q\left(\frac{R}{K^{1/3}}\right)\le C(\varepsilon)\left(\frac{R}{K^{1/3}}\right)^{\varepsilon}$. Then, by taking $K$ sufficiently large, one can easily obtain the desired bound $Q(R)\le C(\varepsilon)R^{\varepsilon}$, for all $\varepsilon>0$ from the above recursive relation.
        
        \vspace{7pt}	
		We organize the paper as follows. In Section 2, we establish a restriction estimate on $\Omega_{\lambda}$ using $\ell^{2}$--decoupling. In addition, we apply the rescaling technique to prove the non--endpoint case of Theorem \ref{2D}. In Section 3, we find interesting connections between our newly established restriction estimate for the finite-type $3$ surfaces and discrete Schr\"{o}dinger equations, and derive better analysis for those equations.

		\noindent
		
		\vspace{5pt}
		\textbf{Notation.}
		\begin{itemize}
			\item By $u\in C^{k}(I; B)$ $($resp.$\; L^{p}(I;B))$ for a Banach space $B$ and time interval $I$, we mean $u$ is a $C^{k}$ (resp.$ \; L^{p})$ map from $I$ to $B$ (see Chapter 5 in \cite{3}).
			\item By $A\lesssim B$ (resp. $A\sim B$), we mean there is a positive constant $C$, such that $A\le CB$ (resp. $C^{-1}B\le A \le C B$). If the constant $C$ depends on $p,$ then we write $A\lesssim_{p}B$ (resp. $A\sim_{p} B$). 
			\item By $\widehat{F}$ (resp. $\widetilde{G}$), we mean the Fourier transform in 2D (resp. 3D).
			\item By $B_{R}^{d}(x)$, we mean the open ball in $\mathbb{R}^{d}$ with center $x$ and radius $R$. 
		\end{itemize}
		\vspace{10pt}
	
		\section{Restriction estimate on $\Omega_{\lambda}$}
		We first present a parallel decoupling lemma (see \cite{2}).
		\begin{lemma}\label{parallel}
			For $p\ge 2$, let $g=\sum_{j}g_{j}$ and $\mu_{i},\omega_{i}$ be measures, with $\mu=\sum_{i}\mu_{i}$, $\omega=\sum_{i}\omega_{i}$, if we have
			\begin{equation*}
				\|g\|_{L^{p}(\mu_{i})}\le D\left(\sum_{j}\|g_{j}\|_{L^{p}(\omega_{i})}^{2}\right)^{1/2}, \quad \forall i, 
			\end{equation*}
			then we have 
			\begin{equation*}
				\|g\|_{L^{p}(\mu)}\le D\left(\sum_{j}\|g_{j}\|_{L^{p}(\omega)}^{2}\right)^{1/2},
			\end{equation*}
			with the same decoupling constant $D$.
		\end{lemma}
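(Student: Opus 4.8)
The plan is to deduce the global inequality from the per-index hypothesis by a completely soft argument: raise everything to the $p$-th power, sum over the index $i$ (using $\mu=\sum_i\mu_i$), and then interchange the two summations by the triangle inequality in the sequence space $\ell^{p/2}$, which is available precisely because $p\ge 2$ forces $p/2\ge 1$. Finally, recombine the pieces over $i$ using $\omega=\sum_i\omega_i$.

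In detail, first observe that $\|g\|_{L^p(\mu)}^p=\sum_i\|g\|_{L^p(\mu_i)}^p$, and feeding in the hypothesis $\|g\|_{L^p(\mu_i)}\le D\bigl(\sum_j\|g_j\|_{L^p(\omega_i)}^2\bigr)^{1/2}$ for each $i$ gives
\[
\|g\|_{L^p(\mu)}^p\le D^p\sum_i\Bigl(\sum_j\|g_j\|_{L^p(\omega_i)}^2\Bigr)^{p/2}.
\]
Writing $a_{ij}:=\|g_j\|_{L^p(\omega_i)}^2\ge 0$ and applying Minkowski's inequality for the $\ell^{p/2}$-norm in the variable $i$ to the sum over $j$ (here one needs $p/2\ge 1$), we get $\bigl(\sum_i(\sum_j a_{ij})^{p/2}\bigr)^{2/p}\le\sum_j\bigl(\sum_i a_{ij}^{p/2}\bigr)^{2/p}$, and raising both sides to the power $p/2$ yields $\sum_i(\sum_j a_{ij})^{p/2}\le\bigl(\sum_j(\sum_i a_{ij}^{p/2})^{2/p}\bigr)^{p/2}$.

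To finish, evaluate the inner sum using $\omega=\sum_i\omega_i$: $\sum_i a_{ij}^{p/2}=\sum_i\|g_j\|_{L^p(\omega_i)}^p=\sum_i\int|g_j|^p\,d\omega_i=\int|g_j|^p\,d\omega=\|g_j\|_{L^p(\omega)}^p$, so $\bigl(\sum_i a_{ij}^{p/2}\bigr)^{2/p}=\|g_j\|_{L^p(\omega)}^2$. Chaining the displays,
\[
\|g\|_{L^p(\mu)}^p\le D^p\Bigl(\sum_j\|g_j\|_{L^p(\omega)}^2\Bigr)^{p/2},
\]
and taking $p$-th roots produces the asserted bound with the same constant $D$. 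I do not expect any genuine obstacle: the only point that is not purely formal is the $\ell^{p/2}$ triangle inequality, which is exactly where the hypothesis $p\ge 2$ is used (for $p<2$ the inequality would go the wrong way), and the only bookkeeping is the routine appeal to monotone convergence when the index ranges over $i$ or $j$ are countably infinite.
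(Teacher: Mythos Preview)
Your proof is correct and follows essentially the same route as the paper: raise to the $p$-th power, sum over $i$ using $\mu=\sum_i\mu_i$, apply Minkowski's inequality in $\ell^{p/2}$ (using $p\ge 2$) to swap the $i$ and $j$ sums, and recombine via $\omega=\sum_i\omega_i$. The paper's write-up is the same computation, just with the $\ell^{p/2}$ norm written out explicitly rather than via your auxiliary notation $a_{ij}$.
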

		\begin{proof}
			It is a direct application of Minkowski's inequality.
			\begin{equation*}
				\int |g|^{p} d\mu=\sum_{i}\int|g|^{p}d\mu_{i}\le D^{p}\sum_{i}\left(\sum_{j}\|g_{j}\|_{L^{p}(\omega_{i})}^{2}\right)^{p/2}
			\end{equation*}
			\begin{equation*}
				=D^{p}\left\|\left\lbrace\sum_{j}\|g_{j}\|_{L^{p}(\omega_{i})}^{2}\right\rbrace_{i}\right\|_{\ell^{p/2}}^{p/2}\le D^{p}\left[\sum_{j}\left\|\left\lbrace\|g_{j}\|_{L^{p}(\omega_{i})}^{2}\right\rbrace_{i} \right\|_{\ell^{p/2}}\right]^{p/2}
			\end{equation*}
			\begin{equation*}
				=D^{p}\left[\sum_{j}\left(\sum_{i}\|g_{j}\|_{L^{p}(\omega_{i})}^{p}\right)^{2/p}
				\right]^{p/2}=D^{p}\left(\sum_{j}\|g_{j}\|_{L^{p}(\omega)}^{2}\right)^{p/2}.
			\end{equation*}
		\end{proof}
		\begin{lemma}\label{3.3}
			For $2\le p\le 6$, $\gamma_{\lambda}:=\lbrace(t,t^{3}):t\in[\lambda,2\lambda]\rbrace$, $\supp\;   \widehat{F}\subseteq \mathcal{N}_{K^{-1}}(\gamma_{\lambda})$, we have 
			\begin{equation*}
				\|F\|_{L^{p}(B_{K}^{2})}\lesssim_{\varepsilon} K^{\varepsilon} \left(\sum_{\theta}\|F_{\theta}\|_{L^{p}(B_{K}^{2})}^{2}\right)^{1/2}, \quad \widehat{F_{\theta}}=\chi_{\theta}\widehat{F},
			\end{equation*}
			where the sum is over all $\theta: \lambda^{-1/2}K^{-1/2}\times K^{-1}$--slabs contained in $\mathcal{N}_{K^{-1}}(\gamma_{\lambda})$.
		\end{lemma}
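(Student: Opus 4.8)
The plan is to reduce the decoupling of $\mathcal{N}_{K^{-1}}(\gamma_\lambda)$ into $\lambda^{-1/2}K^{-1/2}\times K^{-1}$-slabs to the standard $\ell^2$-decoupling theorem for the parabola (Bourgain--Demeter), applied after a suitable affine rescaling that turns the arc $\gamma_\lambda=\{(t,t^3):t\in[\lambda,2\lambda]\}$ into (an arc of) the standard parabola at unit scale. First I would perform the change of variables $t=\lambda(1+s)$, $s\in[0,1]$, so that $t^3=\lambda^3(1+s)^3=\lambda^3(1+3s+3s^2+s^3)$. Subtracting off the affine (in $s$) part $\lambda^3(1+3s)$, which is a linear substitution in frequency and hence harmless for $L^p$ norms and for the support condition, the curve becomes $s\mapsto(s,\, 3s^2+s^3)$ up to the scaling factors $\lambda$ in the first coordinate and $\lambda^3$ in the second. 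Renormalizing the first coordinate by $\lambda$ and the second by $\lambda^3$, the phase is $3s^2+s^3 = 3s^2(1+s/3)$, a small smooth perturbation of $3s^2$ on $s\in[0,1]$ with second derivative comparable to a constant; this is a curve with nonvanishing curvature, uniformly over $\lambda$, so the Bourgain--Demeter $\ell^2$-decoupling theorem applies to it (with the usual remark that $C^2$ perturbations of the parabola decouple with the same exponent, and $2\le p\le 6$ is exactly the sharp range).

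The key bookkeeping step is to track how the neighborhood width and the slab dimensions transform under the anisotropic scaling $(\xi_1,\xi_2)\mapsto(\lambda^{-1}\xi_1,\lambda^{-3}\xi_2)$. The $K^{-1}$-vertical-neighborhood of $\gamma_\lambda$: in the rescaled $s$-picture, the vertical width $K^{-1}$ in the original $\xi_2$ becomes $\lambda^{-3}K^{-1}$; but the natural scale at which to run parabola decoupling is dictated by the horizontal extent, which after rescaling is order $1$ in $s$. One should instead rescale to the scale $\delta$ at which the $\delta$-neighborhood of the parabola has vertical thickness $\delta^2$; matching $\delta^2 \sim \lambda^{-3}K^{-1}/(\lambda \cdot \text{(horizontal normalization)})$ — more cleanly, the canonical rescaling of the arc $t\in[\lambda,2\lambda]$ of $\{(t,t^3)\}$ sends it to the full parabola over a unit interval, sending vertical width $K^{-1}$ to $K^{-1}/(\lambda \cdot \lambda^2)=\lambda^{-3}K^{-1}$... wait: the correct statement is that the affine map normalizing the arc has linear part with determinant $\sim\lambda\cdot(\lambda^2)=\lambda^3$ for the $(\partial_t,\partial_t^2)$-frame, so the $K^{-1}$-neighborhood becomes the $\delta$-neighborhood of the standard parabola with $\delta \sim \lambda^{-1/2}K^{-1/2}$ — precisely so that the curvature window $\lambda^{1/2}$ and the $K^{-1}$ width combine to give horizontal slab-length $\delta = \lambda^{-1/2}K^{-1/2}$, matching the claimed $\theta$-dimensions $\lambda^{-1/2}K^{-1/2}\times K^{-1}$. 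Then Bourgain--Demeter gives, at scale $\delta$, the decoupling constant $\lesssim_\varepsilon \delta^{-\varepsilon}\lesssim_\varepsilon K^\varepsilon$ (since $\delta^{-1}\le K$, using $\lambda\ge K^{-1/3}$ so $\lambda^{-1/2}\le K^{1/6}$, hence $\delta^{-1}\le K^{1/6}K^{1/2}=K^{2/3}\le K$).

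The remaining steps are routine: undo the rescaling to return to $B_K^2$ (affine rescalings are $L^p$-isometries up to Jacobian constants that cancel between the two sides of a decoupling inequality), verify that the $\theta$-slabs in the rescaled picture are exactly the images of the $\lambda^{-1/2}K^{-1/2}\times K^{-1}$-slabs tiling $\mathcal{N}_{K^{-1}}(\gamma_\lambda)$, and note that the ball $B_K^2$ is mapped to an ellipse contained in a ball of radius $\sim K$, so one can apply the flat (weighted) version of decoupling on that ball without loss. I expect the main obstacle to be purely organizational: getting the anisotropic rescaling exponents exactly right so that the output scale is $\le K$ and the slab dimensions match the statement, and handling the fact that after rescaling the relevant region is the neighborhood of a $C^2$-perturbation of the parabola rather than the parabola itself — but this is standard and the perturbation is uniformly small by the finite-type normalization, so Bourgain--Demeter applies verbatim. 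The hypothesis $2\le p\le 6$ is used exactly once, as the sharp range of $\ell^2$-decoupling for curves with nonvanishing curvature in $\R^2$.
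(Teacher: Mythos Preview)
Your approach is essentially the same as the paper's: affinely rescale the arc $\{(t,t^3):t\in[\lambda,2\lambda]\}$ via $t=\lambda(1+u)$ to obtain a curve $\gamma(u)=3u^2+u^3$ with nonvanishing curvature, then apply Bourgain--Demeter $\ell^2$-decoupling. The paper first passes to the extension-operator formulation and, after the change of variables, covers the image ellipse $\mathcal{L}(B_K^2)$ (of dimensions $\sim\lambda K\times\lambda^3 K$) by balls $B_{\lambda^3 K}^2$, applies decoupling on each at scale $(\lambda^3 K)^{-1}$ into $\lambda^{-3/2}K^{-1/2}$-intervals in $u$, and reassembles via the parallel-decoupling Lemma (Minkowski in $\ell^{2/p}$); your plan to use the weighted form on the containing ball is an equivalent alternative. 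Two small corrections to your bookkeeping: the rescaled neighborhood thickness is $\lambda^{-3}K^{-1}$ (so the cap width in $u$ is $(\lambda^{-3}K^{-1})^{1/2}=\lambda^{-3/2}K^{-1/2}$, which becomes $\lambda^{-1/2}K^{-1/2}$ in $t$ as you state), and in your bound on $\delta^{-1}$ you should have $\lambda^{1/2}K^{1/2}\le K^{1/2}$ rather than invoking $\lambda^{-1/2}\le K^{1/6}$---the conclusion $\delta^{-1}\le K$ is in any case correct.
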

		\begin{proof}
			By translation, we can reduce this to the following estimate.
			\begin{equation*}
				\|E_{[\lambda,2\lambda]}f\|_{L^{p}(B_{K}^{2})}\lesssim_{\varepsilon} K^{\varepsilon} \left(\sum_{I:\lambda^{-1/2}K^{-1/2}-\text{interval} }\|E_{I}f\|_{L^{p}(B_{K}^{2})}\right)^{1/2}.
			\end{equation*} 
			Changing variables $t:=\lambda u+\lambda$, we have 
			\begin{equation*}
				\Big|E_{[\lambda,2\lambda]}f(y)\Big|=\Big|\int_{\lambda}^{2\lambda}f(t)e^{i(y_{1}t+y_{2}t^{3})}dt\Big|
			\end{equation*}
			\begin{equation*}
				=\Big|\int_{0}^{1}\widetilde{f}(u)e^{i(\widetilde{y_{1}}u+\widetilde{y}_{2}\gamma(u))}du\Big|\triangleq\Big|E_{[0,1]}^{\gamma}\widetilde{f}(\widetilde{y})\Big|, 
			\end{equation*}
			where we denote 
			\begin{equation*}
				\widetilde{f}(u)=\lambda f(\lambda u+\lambda),\; \gamma(u)=3u^{2}+u^{3},
			\end{equation*}
			with the linear mapping $\mathcal{L}$ in the physical space
			\begin{equation*}
				\mathcal{L}(y)=\mathcal{L}((y_{1},y_{2})):=(\widetilde{y_{1}},\widetilde{y_{2}})=(\lambda y_{1}+3\lambda^{3}y_{2}, \lambda^{3}y_{2}).
			\end{equation*}
			Since $\gamma(u)$ now has positive curvature, we can apply the  $\ell^{2}$--decoupling inequality (see \cite{1}) and the (inverse) Minkowski's inequality to derive
			\begin{equation*}
				\left\|E_{[\lambda,2\lambda]}f\right\|_{L^{p}(B_{K}^{2})}^{p}=\lambda^{-4}\left\|E_{[0,1]}^{\gamma}\widetilde{f}\right\|_{L^{p}(\mathcal{L}(B_{K}^{2}))}^{p}=\lambda^{-4}\sum_{B_{\lambda^{3}K}^{2}\subseteq \mathcal{L}(B_{K}^{2})}\left\|E_{[0,1]}^{\gamma}\widetilde{f}\right\|_{L^{p}(B_{\lambda^{3}K}^{2})}^{p}
			\end{equation*}
			\begin{equation*}
				\lesssim_{\varepsilon}\lambda^{-4}K^{p\varepsilon}\sum_{B_{\lambda^{3}K}^{2}\subseteq \mathcal{L}(B_{K}^{2})}\left(\sum_{\widetilde{I}:\lambda^{-3/2}K^{-1/2}-\text{interval}}\left\|E_{\widetilde{I}}^{\gamma}\widetilde{f}\right\|_{L^{p}(B_{\lambda^{3}K}^{2})}^{2}\right)^{p/2}
			\end{equation*}
			\begin{equation*}
				=\lambda^{-4}K^{p\varepsilon}\sum_{B_{\lambda^{3}K}^{2}\subseteq \mathcal{L}(B_{K}^{2})}\left\|\left\{\left\|E_{\widetilde{I}}^{\gamma}\widetilde{f}\right\|_{L^{p}(B_{\lambda^{3}K}^{2})}^{p}\right\}_{\widetilde{I}}\right\|_{\ell^{\frac{2}{p}}}
			\end{equation*}
			\begin{equation*}
				\le\lambda^{-4}K^{p\varepsilon}\left\|\left\{\sum_{B_{\lambda^{3}K}^{2}\subseteq \mathcal{L}(B_{K}^{2})}\left\|E_{\widetilde{I}}^{\gamma}\widetilde{f}\right\|_{L^{p}(B_{\lambda^{3}K}^{2})}^{p}\right\}_{\widetilde{I}}\right\|_{\ell^{\frac{2}{p}}}
			\end{equation*}
			\begin{equation*}
				\lesssim_{\varepsilon}\lambda^{-4}K^{p\varepsilon}\left(\sum_{\widetilde{I}:\lambda^{-3/2}K^{-1/2}-\text{interval}}\left\|E_{\widetilde{I}}^{\gamma}\widetilde{f}\right\|_{L^{p}(\mathcal{L}(B_{K}^{2}))}^{2}\right)^{p/2}
			\end{equation*}
			\begin{equation*}
				=K^{p\varepsilon} \left(\sum_{I:\lambda^{-1/2}K^{-1/2}-\text{interval} }\|E_{I}f\|_{L^{p}(B_{K}^{2})}^{2}\right)^{p/2}.
			\end{equation*}
		\end{proof}
		 As a direct consequence, we can derive a decoupling inequality on $\Omega_{\lambda}$.
		 \begin{coro}\label{3.2}
		 		For $2\le p\le 6$, $\Gamma_{\lambda}:=\lbrace(\xi_{1},\xi_{2},\xi_{1}^{3}+\xi_{2}^{3}):(\xi_{1},\xi_{2})\in \Omega_{\lambda}\rbrace$, $\supp\;   \widetilde{G}\subseteq \mathcal{N}_{K^{-1}}(\Gamma_{\lambda})$, we have 
		 		\begin{equation*}
		 			\|G\|_{L^{p}(B_{K}^{3})}\lesssim_{\varepsilon} K^{\varepsilon} \left(\sum_{\tau}\|G_{\tau}\|_{L^{p}(B_{K}^{3})}^{2}\right)^{1/2}, \quad \widetilde{G_{\tau}}=\chi_{\tau}\widetilde{G},
		 		\end{equation*}
		 		where the sum is over all $\tau: \lambda^{-1/2}K^{-1/2}\times K^{-1/3}\times K^{-1}$--slabs contained in $\mathcal{N}_{K^{-1}}(\Gamma_{\lambda})$.
		  \end{coro}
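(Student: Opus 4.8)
The plan is to deduce the three–dimensional decoupling over $\Gamma_{\lambda}$ from the two–dimensional decoupling of Lemma \ref{3.3} by exploiting the cylindrical structure of the surface over the thin slab $\Omega_{\lambda}$: in the $\xi_{2}$–direction the graph of $\xi_{1}^{3}+\xi_{2}^{3}$ is flat at scale $K^{-1}$, so no decoupling is needed there, and in the $\xi_{1}$–direction it is the curve $\gamma_{\lambda}$, to which Lemma \ref{3.3} applies fibrewise.

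First I would make the reduction to a cylinder precise. On $[0,K^{-1/3}]$ one has $0\le\xi_{2}^{3}\le K^{-1}$, hence every point of $\Gamma_{\lambda}$ lies within $K^{-1}$ of the cylinder $C_{\lambda}:=\lbrace(\xi_{1},\xi_{2},\xi_{1}^{3}):\xi_{1}\in[\lambda,2\lambda],\ \xi_{2}\in[0,K^{-1/3}]\rbrace$, so that $\supp\;\widetilde G\subseteq\mathcal{N}_{K^{-1}}(\Gamma_{\lambda})\subseteq\mathcal{N}_{2K^{-1}}(C_{\lambda})$. Since $C_{\lambda}$ is $\gamma_{\lambda}$ (living in the $(\xi_{1},\xi_{3})$–plane) tensored with the interval $[0,K^{-1/3}]$ in $\xi_{2}$, the $(\xi_{1},\xi_{3})$–projection of $\mathcal{N}_{2K^{-1}}(C_{\lambda})$ is contained in $\mathcal{N}_{2K^{-1}}(\gamma_{\lambda})$, and the slabs $\tau$ in the statement are exactly the sets $\theta\times[0,K^{-1/3}]$, where $\theta$ runs over the $\lambda^{-1/2}K^{-1/2}\times K^{-1}$–slabs of Lemma \ref{3.3} (up to the harmless $O(K^{-1})$ fattening in $\xi_{2}$, there is only one layer of $\tau$'s in the $\xi_{2}$–direction, so restricting $\widetilde G$ to $\tau$ is the same as restricting its $(\xi_{1},\xi_{3})$–frequencies to $\theta$).

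Next I would slice in $x_{2}$. For fixed $x_{2}\in\R$ put $F^{x_{2}}(x_{1},x_{3}):=G(x_{1},x_{2},x_{3})$; its $2$D Fourier transform is $\widehat{F^{x_{2}}}(\xi_{1},\xi_{3})=\int\widetilde G(\xi_{1},\xi_{2},\xi_{3})\,e^{ix_{2}\xi_{2}}\,d\xi_{2}$, supported in $\mathcal{N}_{2K^{-1}}(\gamma_{\lambda})$ by the previous step, and $(F^{x_{2}})_{\theta}=G_{\tau}(\cdot,x_{2},\cdot)$ with $\tau=\theta\times[0,K^{-1/3}]$. Applying Lemma \ref{3.3} to $F^{x_{2}}$ (the $O(K^{-1})$–enlargement of the neighbourhood changes the decoupling constant only by $O(1)$), we obtain for every $x_{2}$ and $2\le p\le 6$
\begin{equation*}
\|G(\cdot,x_{2},\cdot)\|_{L^{p}(B_{K}^{2})}^{p}\lesssim_{\varepsilon}K^{p\varepsilon}\Big(\sum_{\tau}\|G_{\tau}(\cdot,x_{2},\cdot)\|_{L^{p}(B_{K}^{2})}^{2}\Big)^{p/2}.
\end{equation*}
Integrating this in $x_{2}$ over the range that makes $(x_{1},x_{2},x_{3})$ sweep out $B_{K}^{3}$, and pushing the $\ell^{2}$–sum outside the $L^{p/2}_{x_{2}}$–norm by Minkowski's inequality (legitimate since $p/2\ge1$) — which is precisely the mechanism behind Lemma \ref{parallel} — gives
\begin{equation*}
\|G\|_{L^{p}(B_{K}^{3})}^{p}\lesssim_{\varepsilon}K^{p\varepsilon}\Big(\sum_{\tau}\|G_{\tau}\|_{L^{p}(B_{K}^{3})}^{2}\Big)^{p/2},
\end{equation*}
and the claim follows after taking $p$-th roots.

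The only point requiring genuine care is the reduction to the cylinder together with the bookkeeping of slabs: one must verify $\Gamma_{\lambda}\subseteq\mathcal{N}_{O(K^{-1})}(C_{\lambda})$ — this is exactly where the finite type $3$ behaviour in $\xi_{2}$ is used, via $\xi_{2}^{3}\lesssim K^{-1}$ on the $K^{-1/3}$–slab — and that each $\tau$ corresponds to a planar $\theta$ of Lemma \ref{3.3} tensored with the full $\xi_{2}$–interval, with the cover of $\mathcal{N}_{K^{-1}}(\Gamma_{\lambda})$ by $\tau$'s matching the cover of $\mathcal{N}_{2K^{-1}}(\gamma_{\lambda})$ by $\theta$'s. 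Everything else is Fubini plus Minkowski's inequality, as in Lemma \ref{parallel}, and introduces no new difficulty.
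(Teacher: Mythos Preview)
Your proof is correct and follows essentially the same approach as the paper: slice $G$ at fixed $x_{2}$, observe that the resulting two-dimensional slice has Fourier support in $\mathcal{N}_{O(K^{-1})}(\gamma_{\lambda})$ (because $\xi_{2}^{3}\le K^{-1}$ on $[0,K^{-1/3}]$), apply Lemma \ref{3.3} fibrewise, and then integrate in $x_{2}$ using Minkowski's inequality. Your write-up is in fact more explicit than the paper's about the cylindrical reduction and the correspondence $\tau\leftrightarrow\theta\times[0,K^{-1/3}]$, which the paper leaves implicit.
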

		\begin{proof}
			We first denote $G_{x_{2}}(x_{1},x_{3}):=G(x_{1},x_{2},x_{3})$. Then, 
			\begin{equation*}
				\supp\; \widehat{G_{x_{2}}}\subseteq \mathcal{N}_{K^{-1}}(\gamma_{\lambda}).
			\end{equation*}
			Applying Lemma \ref{3.3} and the (inverse) Minkowski's inequality, we obtain
			\begin{equation*}
				\|G\|_{L^{p}(B_{K}^{3})}^{p}=\int_{-K}^{K}\|G_{x_{2}}\|_{L^{p}(B_{K}^{2})}^{p} dx_{2}\lesssim_{\varepsilon} K^{p\varepsilon}\int_{-K}^{K} \left(\sum_{\theta}\|(G_{x_{2}})_{\theta}\|_{L^{p}(B_{K}^{2})}^{2}\right)^{p/2} dx_{2}
			\end{equation*}
			\begin{equation*}
				=K^{p\varepsilon}\int_{-K}^{K} \left(\sum_{\tau}\|(G_{\tau})_{x_{2}}\|_{L^{p}(B_{K}^{2})}^{2}\right)^{p/2} dx_{2}
			\end{equation*}
			\begin{equation*}
				=K^{p\varepsilon}\int_{-K}^{K}\left\|\left \lbrace\|(G_{\tau})_{x_{2}}\|_{L^{p}(B_{K}^{2})}^{p}\right\rbrace_{\tau}\right\|_{\ell^{2/p}}dx_{2}
			\end{equation*}
			\begin{equation*}
				\le K^{p\varepsilon} \left\|\left \lbrace\int_{-K}^{K}\|(G_{\tau})_{x_{2}}\|_{L^{p}(B_{K}^{2})}^{p}dx_{2}\right\rbrace_{\tau}\right\|_{\ell^{2/p}}	=K^{p\varepsilon}\left(\sum_{\tau}\|G_{\tau}\|_{L^{p}(B_{K}^{3})}^{2}\right)^{p/2}.
			\end{equation*}
			
		\end{proof}
		Now we are ready to establish the restriction estimate on $\Omega_{\lambda}$, i.e.,
		\begin{equation*}
			\left\|E g_{\Omega_{\lambda}}\right\|_{L^{22/7}(B_{R}^{3})}\lesssim_{\varepsilon} C(K)R^{\varepsilon}\|g\|_{L^{\infty}([0,1]^{2})}.
		\end{equation*}
		
			Applying the equivalent form of Corollary \ref{3.2}, we have 
			\begin{equation*}
				\left\|Eg_{\Omega_{\lambda}}\right\|_{L^{22/7}(B_{K}^{3})}\lesssim_{\varepsilon}\left(\sum_{\tau}\left\|Eg_{\tau\cap\Omega_{\lambda}}\right\|_{L^{22/7}(B_{K}^{3})}^{2}\right)^{1/2},
			\end{equation*}
			where the sum is over all $\tau:$ $\lambda^{-1/2}K^{-1/2}\times K^{-1/3}$--slab in $\Omega_{\lambda}$.
			
			Then, using Lemma \ref{parallel} and summing over all $B_{K}^{3}\subseteq B_{R}^{3}$, we obtain 
			\begin{equation}\label{a}
				\left\|Eg_{\Omega_{\lambda}}\right\|_{L^{22/7}(B_{R}^{3})}\lesssim_{\varepsilon}\left(\sum_{\tau}\left\|Eg_{\tau\cap\Omega_{\lambda}}\right\|_{L^{22/7}(B_{R}^{3})}^{2}\right)^{1/2}.
			\end{equation}
			Now for each $\tau\cap\Omega_{\lambda}$, we adopt the following change of variables.
			\begin{equation*}
				\left\{
				\begin{aligned}
					& \xi_{1}:=\lambda+\lambda^{-1/2}K^{-1/2}\eta_{1},  \\
					& \xi_{2}:=K^{-1/3} \eta_{2},
				\end{aligned}
				\right.
			\end{equation*}
			where $(\eta_{1},\eta_{2})\in [0,1]^{2}$. Then, we obtain
			\begin{equation}\label{x}
				\Big|Eg_{\tau\cap\Omega_{\lambda}}(x)\Big|=\Big|E_{[0,1]^{2}}^{\Gamma}\widetilde{g}(\widetilde{x})\Big|,
			\end{equation}
			where $\widetilde{x}=(\widetilde{x_{1}},\widetilde{x_{2}},\widetilde{x_{3}})$, $\widetilde{g}$ and $\Gamma$ satisfy
			\begin{equation}\label{z}
				\left\{
				\begin{aligned}
					& \widetilde{x_{1}}:=\lambda^{-1/2}K^{-1/2}x_{1}+3\lambda^{3/2}K^{-1/2}x_{3}, \; \widetilde{x_{2}}:=K^{-1/3} x_{2},\;  \widetilde{x_{3}}:=K^{-1}x_{3}, \\
					& \widetilde{g}(\eta_{1},\eta_{2}):=\lambda^{-1/2}K^{-5/6}g(\lambda+\lambda^{-1/2}K^{-1/2}\eta_{1},K^{-1/3} \eta_{2}),\\
					& \Gamma(\eta_{1},\eta_{2}):=3\eta_{1}^{2}+\eta_{2}^{3}+\lambda^{-3/2}K^{-1/2}\eta_{1}^{3}.
				\end{aligned}
				\right.
			\end{equation}
			However, we can not directly apply Theorem \ref{WW} to $E_{[0,1]^{2}}^{\Gamma}$ there, since the term $\eta_{2}^{3}$ degenerates at $\eta_{2}=0$. 
			
			A key observation is that we can further decompose $[0,1]^{2}$ into two parts, one away from $\eta_{2}=0$ and one near $\eta_{2}=0$ (see Figure 5). Then, we can obtain an estimate for $E_{[0,1]^{2}}^{\Gamma}$ as follows. The idea is inspired by \cite{64}.
			
				\begin{figure}
				\centering
				\includegraphics[width=0.7\linewidth]{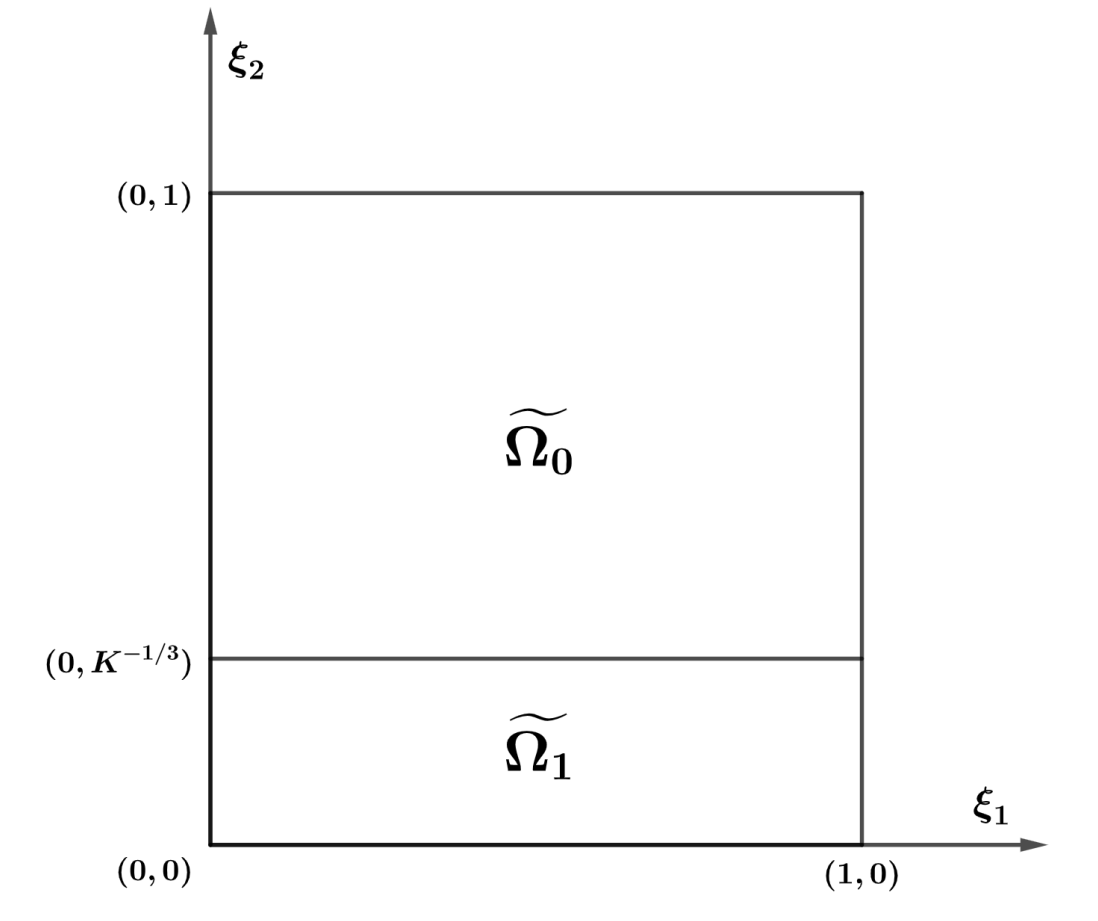}
				\caption{}
				\label{fig:omega2}
			\end{figure}
			
			\begin{prop}\label{23}
				For any $\varepsilon>0$, there exists $\widetilde{C}(\varepsilon)>0$, such that 
				\begin{equation*}
					\|E_{[0,1]^{2}}^{\Gamma}\widetilde{f}\|_{L^{22/7}(B_{R}^{3})}\le \widetilde{C}(\varepsilon)R^{\varepsilon}\|\widetilde{f}\|_{L^{\infty}([0,1]^{2})}, \quad \forall \widetilde{f}\in L^{\infty}([0,1]^{2}).
				\end{equation*}
			\end{prop}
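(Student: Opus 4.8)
The plan is to reduce $E^{\Gamma}_{[0,1]^{2}}$ to Theorem \ref{WW} by isolating the degeneracy, which for $\Gamma(\eta_{1},\eta_{2})=3\eta_{1}^{2}+\eta_{2}^{3}+\lambda^{-3/2}K^{-1/2}\eta_{1}^{3}$ occurs only along the line $\eta_{2}=0$: note that $\lambda\ge K^{-1/3}$ forces the perturbation coefficient $c:=\lambda^{-3/2}K^{-1/2}$ into $(0,1]$, and $\partial_{\eta_{1}}^{2}\Gamma=6+6c\eta_{1}\in[6,12]$ is never small. Fix $\varepsilon>0$. First I would split $[0,1]^{2}=\Theta_{0}\cup\Theta_{1}$ with $\Theta_{1}:=[0,1]\times[0,R^{-1}]$ (the part near $\eta_{2}=0$, as in Figure 5) and $\Theta_{0}:=[0,1]\times[R^{-1},1]$, and handle the two separately. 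On $\Theta_{1}$ only the trivial bound is needed: $\|E^{\Gamma}_{\Theta_{1}}\widetilde f\|_{L^{\infty}(\R^{3})}\le |\Theta_{1}|\,\|\widetilde f\|_{L^{\infty}}=R^{-1}\|\widetilde f\|_{L^{\infty}}$, so by H\"older on $B_{R}^{3}$, $\|E^{\Gamma}_{\Theta_{1}}\widetilde f\|_{L^{22/7}(B_{R}^{3})}\lesssim R^{3\cdot 7/22}R^{-1}\|\widetilde f\|_{L^{\infty}}=R^{-1/22}\|\widetilde f\|_{L^{\infty}}$, which is more than enough.

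The substance is $\Theta_{0}$, which I would decompose dyadically in $\eta_{2}$ into strips $\sigma_{\mu}:=[0,1]\times[\mu,2\mu]$, $\mu=2^{-k}$ with $1\le k\le\lceil\log_{2}R\rceil$ (finitely many), so that $E^{\Gamma}_{\Theta_{0}}\widetilde f=\sum_{\mu}E^{\Gamma}_{\sigma_{\mu}}\widetilde f$. On each $\sigma_{\mu}$ I would perform the parabolic rescaling adapted to the scale-$\mu$ cap of the curve $\eta_{2}\mapsto\eta_{2}^{3}$: substituting $\eta_{2}=\mu+\mu^{-1/2}w_{2}$ (so $w_{2}\in[0,\mu^{3/2}]$) turns $\eta_{2}^{3}$ into $\mu^{3}+3\mu^{3/2}w_{2}+3w_{2}^{2}+\mu^{-3/2}w_{2}^{3}$; discarding the $\eta$-independent phase and absorbing the $w_{2}$-linear term into a translated physical variable, the phase $x\cdot(\eta_{1},\eta_{2},\Gamma)$ becomes $\widetilde x_{1}\eta_{1}+\widetilde x_{2}w_{2}+\widetilde x_{3}\,\phi(\eta_{1},w_{2})$ with $\widetilde x=(x_{1},\ \mu^{-1/2}x_{2}+3\mu^{3/2}x_{3},\ 6x_{3})$ and $\phi=\tfrac12\eta_{1}^{2}+\tfrac{c}{6}\eta_{1}^{3}+\tfrac12 w_{2}^{2}+\tfrac{\mu^{-3/2}}{6}w_{2}^{3}$, whence $|E^{\Gamma}_{\sigma_{\mu}}\widetilde f(x)|=\mu^{-1/2}\,|E^{\phi}_{[0,1]\times[0,\mu^{3/2}]}h(\widetilde x)|$ for $h(\eta_{1},w_{2})=\widetilde f(\eta_{1},\mu+\mu^{-1/2}w_{2})$, $\|h\|_{L^{\infty}}\le\|\widetilde f\|_{L^{\infty}}$. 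The point of this anisotropic choice is that on the sub-unit rectangle $[0,1]\times[0,\mu^{3/2}]\subseteq[0,1]^{2}$ the surface $\phi$ satisfies $\phi(0)=0$, $\nabla\phi(0)=0$, $D^{2}\phi(0)=\mathrm{Id}$, has Hessian in $[1,2]\cdot\mathrm{Id}$ (because $c\le1$ and $\mu^{-3/2}w_{2}\le1$ there) and uniformly bounded geometry, so Theorem \ref{WW} applies to $h$ with a constant independent of $\mu$ and $c$; in the ``hyperbolic'' case $\varphi_{1}-\varphi_{2}$ one gets $D^{2}\phi(0)=\mathrm{diag}(1,-1)$ and invokes Theorem \ref{Hyper} instead, everything else being unchanged.

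To make the sum over $\mu$ converge I would, for a fixed exponent $p=p(\varepsilon)$ slightly larger than $22/7$, apply Theorem \ref{WW} in its $L^{p}\to L^{p}$ form, $\|E^{\phi}h\|_{L^{p}(\R^{3})}\le C(p)\|h\|_{L^{p}}\le C(p)\mu^{3/(2p)}\|\widetilde f\|_{L^{\infty}}$, keeping the true $L^{p}$ norm of $h$ — its decay like $\mu^{3/(2p)}$, coming from the short domain, is precisely what produces the gain. Then, using $\widetilde x(B_{R}^{3})\subseteq B^{3}_{C\mu^{-1/2}R}$, $dx=\tfrac16\mu^{1/2}\,d\widetilde x$, and H\"older on $B^{3}_{C\mu^{-1/2}R}$ to pass from $L^{p}$ back to $L^{22/7}$ (this last step is exactly where the harmless $R^{\varepsilon}$ enters, as for $\Omega_{0}$ in the outline), the bookkeeping of powers gives $\|E^{\Gamma}_{\sigma_{\mu}}\widetilde f\|_{L^{22/7}(B_{R}^{3})}\le C(p)\,\mu^{a(p)}\,R^{b(p)}\,\|\widetilde f\|_{L^{\infty}}$ with $a(p)=-\tfrac{9}{11}+\tfrac{3}{p}$ and $b(p)=\tfrac{21}{22}\bigl(1-\tfrac{22}{7p}\bigr)$; at $p=22/7$ these are $a=\tfrac{3}{22}>0$ and $b=0$, so choosing $p(\varepsilon)$ close enough to $22/7$ keeps $a(p)>0$ while forcing $b(p)\le\varepsilon$. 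Summing the geometric series $\sum_{\mu}\mu^{a(p)}\le\sum_{k\ge1}2^{-k\,a(p)}<\infty$ then yields $\|E^{\Gamma}_{\Theta_{0}}\widetilde f\|_{L^{22/7}(B_{R}^{3})}\lesssim_{\varepsilon}R^{\varepsilon}\|\widetilde f\|_{L^{\infty}}$, and combining with the $\Theta_{1}$ bound completes the argument.

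I expect the main obstacle to be the rescaling of the strips $\sigma_{\mu}$. One must resolve the mismatch between the $O(1)$ curvature in $\eta_{1}$ and the $O(\mu)$ curvature of the $\eta_{2}^{3}$-cap without destroying either, and the only affine way to do this is to accept a sub-unit domain $[0,1]\times[0,\mu^{3/2}]$ for the normalized surface; one then has to be sure Theorem \ref{WW} (stated on $[0,1]^{2}$) applies uniformly to such a sub-rectangular piece — for instance by extending $\phi$ off the rectangle to a genuinely uniformly-regular convex (resp.\ signature-$(1,1)$) surface on $[0,1]^{2}$ before applying the theorem — and that the Jacobian bookkeeping, carried out with the honest $L^{p}$ norm of the rescaled density rather than an $L^{\infty}$ bound, really produces the positive power $\mu^{3/22}$; passing prematurely to $L^{\infty}$ on the Fourier side would lose this gain and the sum over $\mu$ would diverge.
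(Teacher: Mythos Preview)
Your argument is correct, but it follows a genuinely different route from the paper's.

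The paper treats the class ``$\phi_{1}$ of finite type $2$ plus $\phi_{2}$ of finite type $3$'' as self-similar and runs an induction on scales: split $[0,1]^{2}$ at height $K^{-1/3}$, apply Theorem~\ref{WW} directly on the nondegenerate top piece, and on the bottom strip use an $\ell^{2}$-decoupling (the analogue of Corollary~\ref{3.2}) into $K^{-1/2}\times K^{-1/3}$ slabs. Rescaling each slab returns a phase of the \emph{same} type-$2$-plus-type-$3$ form on the full square, yielding the recursion $A(R)\le A_{0}(\varepsilon)R^{\varepsilon}+\widetilde A_{0}(\varepsilon)K^{\varepsilon/10}A(R/K^{1/3})$, which closes.

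You instead decompose dyadically in $\eta_{2}$ all the way down to $R^{-1}$, rescale each strip $[\mu,2\mu]$ anisotropically so the curvature is normalized on a sub-rectangle $[0,1]\times[0,\mu^{3/2}]$, and invoke Theorem~\ref{WW} in its $L^{p}\to L^{p}$ form at an exponent $p>22/7$; the short domain gives the crucial factor $\mu^{3/(2p)}$, the geometric series converges, and a single H\"older step produces the $R^{\varepsilon}$. This avoids both decoupling and the induction, at the price of two ingredients the paper does not need: the genuine $L^{p}\to L^{p}$ estimate rather than the $L^{\infty}\to L^{p}$ local form, and the uniformity of Theorem~\ref{WW} over $C^{2}$ surfaces with Hessian $\sim\mathrm{Id}$ (your rescaled phase has $\partial_{w_{2}}^{3}\phi=\mu^{-3/2}$ on the support of $h$, so the extension you describe is $C^{2}$ with uniform second-derivative bounds but unbounded third derivative). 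Both points are covered by the remark following Theorem~\ref{WW}, so the argument goes through; it is worth stating this dependence explicitly.
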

			\begin{proof}
				In fact, we will establish this restriction estimate in a more general setting.
				
				Let $A(R)$ be the optimal constant such that
				\begin{equation*}
					\|E_{S}\widetilde{f}\|_{L^{22/7}(B_{R}^{3})}\le A(R)\|\widetilde{f}\|_{L^{\infty}([0,1]^{2})}, \quad \forall \widetilde{f}\in L^{\infty}([0,1]^{2}),
				\end{equation*}
				holds for all surfaces $S:=\lbrace(\xi_{1},\xi_{2},\phi(\xi_{1})+\phi(\xi_{2})):(\xi_{1},\xi_{2})\in[0,1]^{2}\rbrace$, where $\phi_{1}$ is of finite type $2$ and $\phi_{2}$ is of finite type $3$.
				
				We decompose the region $[0,1]^{2}$ into $\widetilde{\Omega_{0}}\cup\widetilde{\Omega_{1}}$ (see Figure 5).
				
				 On $\widetilde{\Omega_{0}}$, surface $S$ is non-degenerate, which directly implies that for any $\varepsilon>0$, there exists $A_{0}(\varepsilon)$ such that
				\begin{equation}\label{bn}
					\|E_{S}\widetilde{f}_{\widetilde{\Omega_{0}}}\|_{L^{22/7}(B_{R}^{3})}\le  A_{0}(\varepsilon)R^{\varepsilon}\|\widetilde{f}\|_{L^{\infty}([0,1]^{2})}, \quad \forall \widetilde{f}\in L^{\infty}([0,1]^{2}).
				\end{equation}
				On $\widetilde{\Omega_{1}}$, we can proceed with a similar argument as in Lemma \ref{3.3} and Corollary \ref{3.2}. Then, we obtain the following $\ell^{2}$-decoupling inequality
				\begin{equation}\label{nm}
					\|E_{S}\widetilde{f}_{\widetilde{\Omega_{1}}}\|_{L^{22/7}(B_{R}^{3})}\lesssim_{\delta}K^{\delta}\left(\sum_{\widetilde{\tau}}\|E_{S}\widetilde{f}_{\widetilde{\tau}}\|_{L^{22/7}(B_{R}^{3})}^{2}\right)^{1/2}, \quad \forall \delta>0,
				\end{equation}
				where the sum is over all $\widetilde{\tau}:$ $K^{-1/2}\times K^{-1/3}$-slabs contained in $\widetilde{\Omega_{1}}$ (see Figure 6).
				
				\begin{figure}
					\centering
					\includegraphics[width=0.7\linewidth]{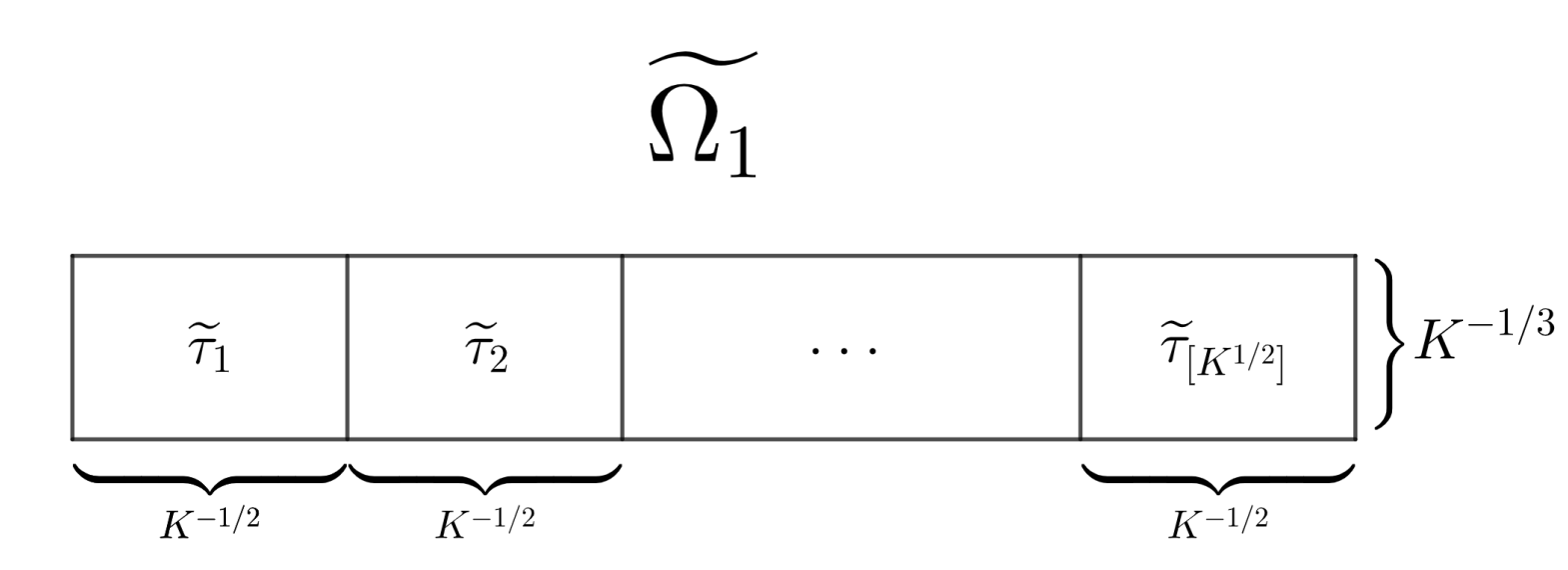}
					\caption{}
					\label{fig:decomposition}
				\end{figure}
				
				For simplicity, we assume $\widetilde{\tau}$ is the first slab $\widetilde{\tau}_{1}$ in Figure 6, i.e., $\widetilde{\tau}=[0,K^{-1/2}]\times[0,K^{-1/3}]$. Taking the following change of variables,
				\begin{equation*}
					\left\{
					\begin{aligned}
						& \overline{\eta}_{1}:=K^{1/2}\eta_{1},  \\
						& \overline{\eta}_{2}:=K^{1/3}\eta_{2},
					\end{aligned}
					\right.
				\end{equation*}
				we have 
				\begin{equation*}
					|E_{S}\widetilde{f}_{\widetilde{\tau}}(\widetilde{x})|=\Big|\int_{[0,1]^{2}}e^{i(\overline{x}_{1}\overline{\eta}_{1}+\overline{x}_{2}\overline{\eta}_{2}+\overline{x}_{3}\overline{\Gamma}(\overline{\eta}_{1},\overline{\eta}_{2}))}\overline{f}(\overline{\eta}_{1},\overline{\eta}_{2})d\overline{\eta}_{1}d\overline{\eta}_{2}\Big|:=|\widetilde{E}_{[0,1]^{2}}^{\overline{\Gamma}}\overline{f}(\overline{x})|,
				\end{equation*}
				where $\overline{x}=(\overline{x}_{1},\overline{x}_{2},\overline{x}_{3})$, $\overline{\Gamma}(\overline{\eta}_{1},\overline{\eta}_{2})=\overline{\phi}_{1}(\overline{\eta}_{1})+\overline{\phi}_{2}(\overline{\eta}_{2})$, $\overline{f}$, satisfying
				\begin{equation*}
					\left\{
					\begin{aligned}
						& \overline{x}_{1}:=K^{-1/2}\widetilde{x}_{1},\; \overline{x}_{2}:=K^{-1/3}\widetilde{x}_{2},\; \overline{x}_{3}:=K^{-1}\widetilde{x}_{3},  \\
						& \overline{f}(\overline{\eta}_{1},\overline{\eta}_{2}):=K^{-5/6}\widetilde{f}(K^{-1/2}\overline{\eta}_{1},K^{-1/3}\overline{\eta}_{2}), \\
						& \overline{\phi}_{1}(\overline{\eta}_{1}):=K\phi_{1}(K^{-1/2}\overline{\eta}_{1}),\; \overline{\phi}_{2}(\overline{\eta}_{2}):=K\phi_{2}(K^{-1/3}\overline{\eta}_{2}).
					\end{aligned}
					\right.
				\end{equation*}
				Note that the phase function $\overline{\phi}_{1}$ is still of finite type $2$, and $\overline{\phi}_{2}$ is of finite type $3$. Now we can obtain
				\begin{equation*}
					\|E_{S}\widetilde{f}_{\widetilde{\tau}}\|_{L^{22/7}(B_{R}^{3})}\lesssim K^{7/12}\|\widetilde{E}_{[0,1]^{2}}^{\overline{\Gamma}}\overline{f}\|_{L^{22/7}(B_{RK^{-1/3}}^{3})}\lesssim K^{-1/4}A\left(\frac{R}{K^{1/3}}\right)\|\widetilde{f}\|_{L^{\infty}([0,1]^{2})}.
				\end{equation*}
				Substituting it into (\ref{nm}) and taking $\delta=\frac{\varepsilon}{10}$, we obtain 
				\begin{equation*}
					\|E_{S}\widetilde{f}_{\widetilde{\Omega_{1}}}\|_{L^{22/7}(B_{R}^{3})}\le\widetilde{A}_{0}(\varepsilon) K^{\varepsilon/10}A\left(\frac{R}{K^{1/3}}\right)\|\widetilde{f}\|_{L^{\infty}([0,1]^{2})},
				\end{equation*}
				for some constant $\widetilde{A}_{0}(\varepsilon)>0$.
				
				Combining this with the estimate (\ref{bn}), we obtain
				\begin{equation*}
					A(R)\le A_{0}(\varepsilon)R^{\varepsilon}+\widetilde{A}_{0}(\varepsilon)K^{\varepsilon/10} A\left(\frac{R}{K^{1/3}}\right).
				\end{equation*}
				We then immediately derive the desired bound $A(R)\lesssim_{\varepsilon}R^{\varepsilon}$.
			\end{proof}
			\begin{rem}
				In this proof, we see that the exponent $22/7$ is actually optimal for our rescaling argument. In fact, if we replace $22/7$ with a general $p$ in Proposition \ref{23} and assume the $L^{\infty}\to L^{p}$ estimate for the perturbed paraboloid, then we obtain
				\begin{equation*}
					\|E_{S}\widetilde{f}_{\widetilde{\Omega_{1}}}\|_{L^{p}(B_{R}^{3})}\le\widetilde{A}_{0}(\varepsilon) K^{\frac{11}{6p}-\frac{7}{12}+\frac{\varepsilon}{10}}A\left(\frac{R}{K^{1/3}}\right)\|\widetilde{f}\|_{L^{\infty}([0,1]^{2})},
				\end{equation*}
				which leads to the recursive relation as follows
				\begin{equation*}
					A(R)\le A_{0}(\varepsilon)R^{\varepsilon}+\widetilde{A}_{0}(\varepsilon)K^{\frac{11}{6p}-\frac{7}{12}+\frac{\varepsilon}{10}} A\left(\frac{R}{K^{1/3}}\right).
				\end{equation*}
				To close the induction, we need $\frac{11}{6p}-\frac{7}{12}\le 0$, i.e., $p\ge 22/7$.
			\end{rem}
				\begin{rem}
				In \cite{15}, Li, Miao, and Zheng took another way to deal with the degeneracy of $\Gamma$ on $\eta_{2}=0$. In fact, they further decomposed the region $\Omega_{\lambda}$ (in their case, $\Omega_{\lambda}:=[\lambda,2\lambda]\times [0,K^{-1/4}]$) in the $y$--direction as follows.
				\begin{equation*}
					\Omega_{\lambda}:=\Omega_{\lambda,0}\cup\left(\bigcup_{\sigma}\Omega_{\lambda,\sigma}\right), \quad \sigma=K^{j/4}R^{-1/4}, \; 1\le j\le [\log_{K}R]-1,
				\end{equation*}
				where $\Omega_{\lambda,0}:=[\lambda,2\lambda]\times[0,R^{-1/4}],\; \Omega_{\lambda,\sigma}:=[\lambda,2\lambda]\times[K^{-1/4}\sigma,\sigma].$
				
				For $\Omega_{\lambda,\sigma}$, it is a region away from $\eta_{2}=0$, then there is no difficulty in applying the restriction estimate for the perturbed paraboloid to $\Gamma(\eta_{1},\eta_{2})$.
				
				Using the reverse square function estimate and wave packet decomposition, they successfully reduced the estimate for $\Omega_{\lambda,0}$ to a Kakeya type estimate.
				
				Their method also applies to our setting (surfaces of finite type 3), though the argument is less streamlined than the one above. The analogue of Proposition \ref{23}, however, is not very effective in their setting (surfaces of finite type 4). In fact, the method in Proposition \ref{23} will lead to
				\begin{equation*}
					A(R)\le A_{0}R^{\varepsilon}+\widetilde{A_{0}}(\varepsilon)K^{\frac{7}{4p}-\frac{1}{2}+\frac{\varepsilon}{10}}A\left(\frac{R}{K^{1/4}}\right),
				\end{equation*}
				where $A(R)$ is the optimal constant such that
				\begin{equation*}
					\|E_{S}\widetilde{f}\|_{L^{p}(B_{R}^{3})}\le A(R)\|\widetilde{f}\|_{L^{\infty}([0,1]^{2})}, \quad \forall \widetilde{f}\in L^{\infty}([0,1]^{2}),
				\end{equation*}
				holds for all surfaces $S:=\lbrace(\xi_{1},\xi_{2},\phi(\xi_{1})+\phi(\xi_{2})):(\xi_{1},\xi_{2})\in[0,1]^{2}\rbrace$, where $\phi_{1}$ is of finite type $2$ and $\phi_{2}$ is of finite type $4$.
				
				Now, we are forced to require $\frac{7}{4p}-\frac{1}{2}\le 0$, i.e., $p\ge 3.5$, which is worse than the exponent obtained in \cite{15}. Thus, although the method in Proposition \ref{23} is considerably simpler, it has certain limitations.
				
			\end{rem}
			\vspace{10pt}
			Back to the estimate for $Eg_{\Omega_{\lambda}}$, we now have 
			\begin{equation*}
				\|Eg_{\tau\cap\Omega_{\lambda}}\|_{L^{p}(B_{R}^{3})}\lesssim K^{7/12}\|E_{[0,1]^{2}}^{\Gamma}\widetilde{g}\|_{L^{p}(B_{RK^{-1/3}}^{3})}\lesssim_{\varepsilon}K^{-1/4}\left(\frac{R}{K^{1/3}}\right)^{\varepsilon}\|g\|_{L^{\infty}([0,1]^{2})}.
			\end{equation*}
			Then, we can plug this estimate into (\ref{a}) and obtain that
			\begin{equation*}
			    \|Eg_{\Omega_{\lambda}}\|_{L^{22/7}(B_{R}^{3})}\lesssim_{\varepsilon}C(K)R^{\varepsilon}\|g\|_{L^{\infty}([0,1]^{2})}, \quad \forall \varepsilon>0,
			\end{equation*}
			which finally completes the whole proof.

		\begin{rem}\label{c}
			So far, we have proved the prototypical case of Theorem \ref{main}, i.e., restriction estimate for $S:=\lbrace(\xi_{1},\xi_{2},\xi_{1}^{3}+\xi_{2}^{3}):(\xi_{1},\xi_{2})\in [0,1]^{2}\rbrace$. The proof for the more general case $S:=\lbrace(\xi_{1},\xi_{2},\varphi_{1}(\xi_{1})+\varphi_{2}(\xi_{2})):(\xi_{1},\xi_{2})\in [0,1]^{2}\rbrace$ is similar, and we only need to check some important details as follows.
			
				From the definition of the finite-type 3 functions $\varphi_{1}, \varphi_{2}$, we obtain the Taylor expansions:
				\begin{equation*}
					\varphi_{1}(\xi_{1})=a_{3}\xi_{1}^{3}+\sum_{k=4}^{\infty}a_{k}\xi_{1}^{k},\quad \varphi_{2}(\xi_{2})=b_{3}\xi_{2}^{3}+\sum_{k=4}^{\infty}b_{k}\xi_{2}^{k},
				\end{equation*}
				where $a_{3}\sim1\sim b_{3}$, and $|a_{k}|,|b_{k}|\ll \frac{1}{k!}$, $\forall k\ge4$.
				\begin{itemize}
				
				\item For rescaling (e.g., Lemma \ref{3.3}):
				
				We can still change the variables $t:=\lambda u+\lambda$ and have
				\begin{equation*}
					\Big|E_{[\lambda,2\lambda]}f(y)\Big|=\Big|\int_{\lambda}^{2\lambda}f(t)e^{i(y_{1}t+y_{2}\varphi_{1}(t))}dt\Big|	=\Big|\int_{0}^{1}\widetilde{f}(u)e^{i(\widetilde{y_{1}}u+\widetilde{y}_{2}\gamma(u))}du\Big|\triangleq\Big|E_{[0,1]}^{\gamma}\widetilde{f}(\widetilde{y})\Big|,
				\end{equation*}
				where we denote $\widetilde{f}(u)=\lambda f(\lambda u+\lambda)$. Now the new phase function $\gamma(u)$ and the linear mapping $\mathcal{L}$ are given by
				\begin{equation*}
					\left\{
					\begin{aligned}
						& \gamma(u)=R_{2}u^{2}+R_{3}u^{3}+\sum_{k=4}^{\infty}R_{k}u^{k},  \\
						& 	\mathcal{L}(y)=(\widetilde{y_{1}},\widetilde{y_{2}})=(\lambda y_{1}+\lambda^{3}R_{1}y_{2}, \lambda^{3}y_{2}),
					\end{aligned}
					\right.
				\end{equation*}
				where we define
				\begin{equation*}
					R_{m}:=\left\{
					\begin{aligned}
						& a_{3}\binom{3}{m}+\sum_{k=4}^{\infty}\lambda^{k-3}a_{k}\binom{k}{m},  \quad 1\le m\le 3,\\
						& \sum_{k=4}^{\infty}\lambda^{k-3}a_{k}\binom{k}{m},  \quad m\ge 3.
					\end{aligned}
					\right.
				\end{equation*}
				Then, from the following simple observation (uniform for all finite-type functions),
				\begin{equation*}
					\Big|R_{m}-a_{3}\binom{3}{m}\Big|\ll 1, \quad \forall1\le m\le 3; \quad  |R_{m}|\ll 1, \quad \forall m\ge 3,
				\end{equation*}
				we can still apply the $\ell^{2}$--decoupling to prove Lemma \ref{3.3}.
			\item For induction:
			
			We need to slightly modify the definition of the optimal constant $Q(R)$ in (\ref{n}) as follows.
			
		Let $Q(R)$ be the optimal constant such that 
		\begin{equation*}
			\|E_{S}g\|_{L^{22/7}(B_{R}^{3})}
			\le Q(R)\|g\|_{L^{\infty}([0,1]^{2})}, \quad \forall g\in L^{\infty}([0,1]^{2}),
		\end{equation*}
			holds for all surfaces $S$ of finite type $3$.

		Then, we can still perform the rescaling for the estimate on $\Omega_{3}$ as follows.
		\begin{equation*}
			\Big|E_{S} g_{\Omega_{3}}(y)\Big|=\Big|\int_{[0,K^{-1/3}]^{2}}e^{i(y_{1}\xi_{1}+y_{2}\xi_{2}+y_{3}(\varphi_{1}(\xi_{1})+\varphi_{2}(\xi_{2})))}g(\xi_{1},\xi_{2})d\xi_{1}d\xi_{2}\Big|
		\end{equation*}
		\begin{equation*}
			=\Big|\int_{[0,1]^{2}}e^{i(\widetilde{y}_{1}\xi_{1}+\widetilde{y}_{2}\xi_{2}+\widetilde{y}_{3}(\widetilde{\varphi}_{1}(\eta_{1})+\widetilde{\varphi}_{2}(\eta_{2})))}\widetilde{g}(\eta_{1},\eta_{2})d\eta_{1}d\eta_{2}\Big|\triangleq\Big|\widetilde{E}_{[0,1]^{2}}\widetilde{g}(\widetilde{y})\Big|,
		\end{equation*}
		where $\widetilde{y}=(\widetilde{y_{1}},\widetilde{y_{2}},\widetilde{y_{3}})$,
		\begin{equation*}
		    \left\{
			\begin{aligned}
				& \widetilde{y}_{1}:=K^{-1/3}y_{1},\; \widetilde{y}_{2}:=K^{-1/3}y_{2},\; \widetilde{y}_{3}:=K^{-1}y_{3},\\
				& \widetilde{g}(\eta_{1},\eta_{2}):=K^{-2/3}g(K^{-1/3}\eta_{1},K^{-1/3}\eta_{2}),\\
				& \widetilde{\varphi}_{1}(\eta_{1}):=K\varphi_{1}(K^{-1/3}\eta_{1}),\; \widetilde{\varphi}_{2}(\eta_{2}):=K\varphi_{2}(K^{-1/3}\eta_{2}).
			\end{aligned}
			\right.
		\end{equation*}
			Then, a simple calculation shows that $\widetilde{\varphi}_{1}$ and $\widetilde{\varphi}_{2}$ are still functions of finite type $3$. Therefore, we can still obtain
			\begin{equation*}
				\|E_{S}g_{\Omega_{3}}\|_{L^{22/7}(B_{R}^{3})}\lesssim K^{35/66}\|\widetilde{E}_{[0,1]^{2}}\widetilde{g}\|_{L^{22/7}(B_{RK^{-1/3}}^{3})}
			\end{equation*}
			\begin{equation*}
				\lesssim K^{35/66}Q\left(\frac{R}{K^{1/3}}\right)\|\widetilde{g}\|_{L^{\infty}([0,1]^{2})}=K^{-3/22}Q\left(\frac{R}{K^{1/3}}\right)\|g\|_{L^{\infty}([0,1]^{2})}.
			\end{equation*}
			\end{itemize}
			
		\end{rem}
		\vspace{1pt}
		\begin{rem}
		The proof for the surface $
		S := \{(\xi_{1}, \xi_{2}, \xi_{1}^{3} + \xi_{2}^{3}) : (\xi_{1}, \xi_{2}) \in [0,1]^{2}\}$
		also applies to $
		S := \{(\xi_{1}, \xi_{2}, \xi_{1}^{3} - \xi_{2}^{3}) : (\xi_{1}, \xi_{2}) \in [0,1]^{2}\}.$
		Moreover, the condition $
		\frac{11}{2p} + \frac{3}{4} \le \frac{5}{2q'}$
		arises naturally from the same calculation after replacing \(22/7\) and \(\infty\) with \(p\) and \(q\), respectively.
			
			We only need to check the rescaling in (\ref{x}) and (\ref{z}). The only difference is that we now have 
			\begin{equation*}
				\left\{
				\begin{aligned}
					& \widetilde{x_{1}}:=\lambda^{-1/2}\sigma^{3/2}x_{1}+3\lambda^{3/2}\sigma^{3/2}x_{3}, \; \widetilde{x_{2}}:=\sigma x_{2},\;  \widetilde{x_{3}}:=\sigma^{3}x_{3}, \\
					& \widetilde{g}(\eta_{1},\eta_{2}):=\lambda^{-1/2}\sigma^{5/2}g(\xi_{1},\xi_{2}),\\
					& \Gamma(\eta_{1},\eta_{2}):=3\eta_{1}^{2}-\eta_{2}^{3}+\lambda^{-3/2}\sigma^{3/2}\eta_{1}^{3},
				\end{aligned}
				\right.
			\end{equation*}
			instead of $\Gamma(\eta_{1},\eta_{2}):=3\eta_{1}^{2}+\eta_{2}^{3}+\lambda^{-3/2}\sigma^{3/2}\eta_{1}^{3}$. In this case, we can establish a ``hyperbolic" analogue of Proposition \ref{23}, and there is no further difficulty.
			
			Similarly, we can derive Theorem \ref{main2} based on the modification in Remark \ref{c}.
		\end{rem}
		\vspace{1pt}
		\begin{rem}\label{32}
			In Proposition \ref{23}, we actually have proven the following restriction estimates, which are necessary for the application in the next section.
			
			\begin{figure}
				\centering
				\includegraphics[width=0.8\linewidth]{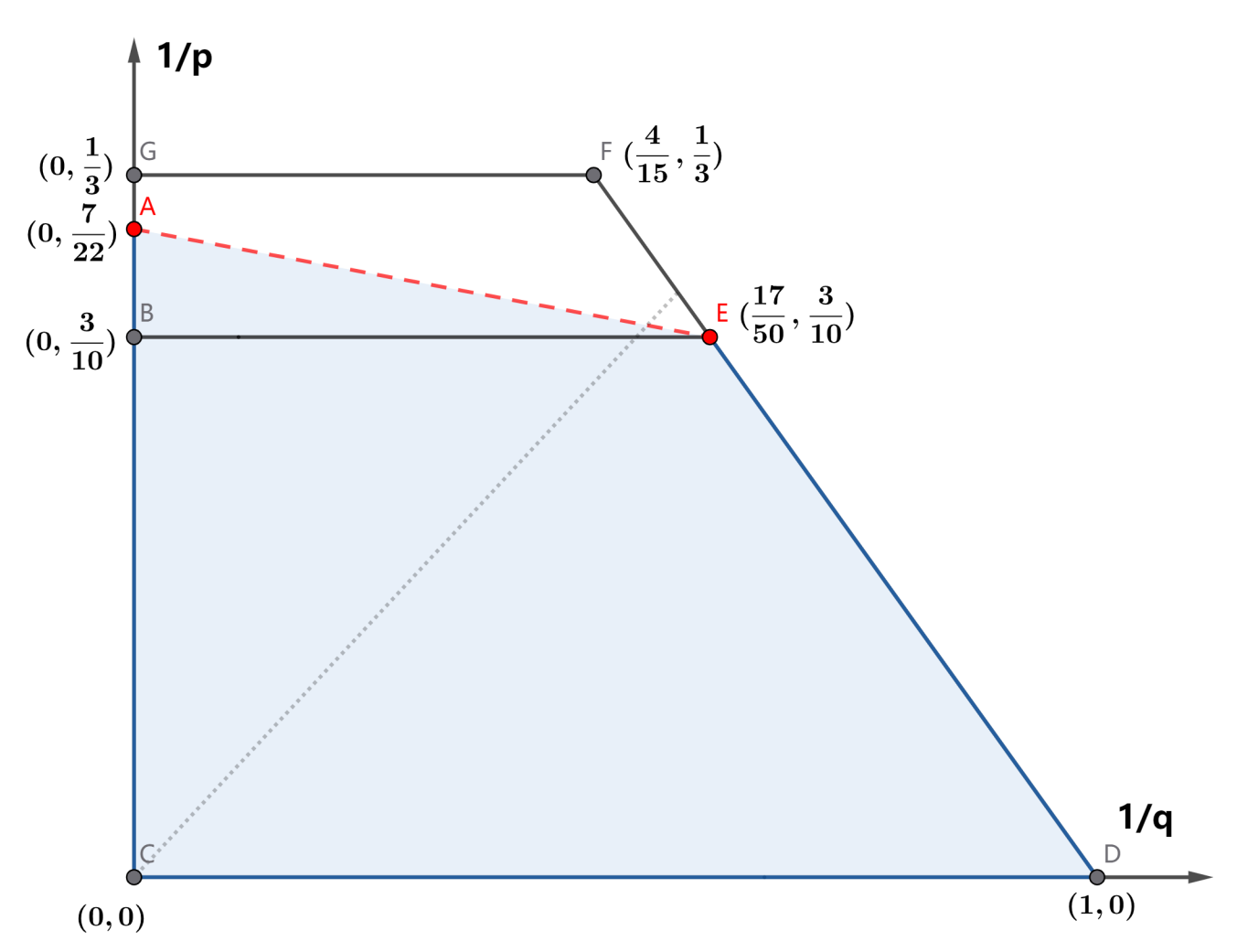}
				\caption{Range for $\phi_{1}(\xi_{1})+\phi_{2}(\xi_{2})$}
				\label{fig:restrictionrange3}
			\end{figure}
			
			\begin{figure}
				\centering
				\includegraphics[width=0.84\linewidth]{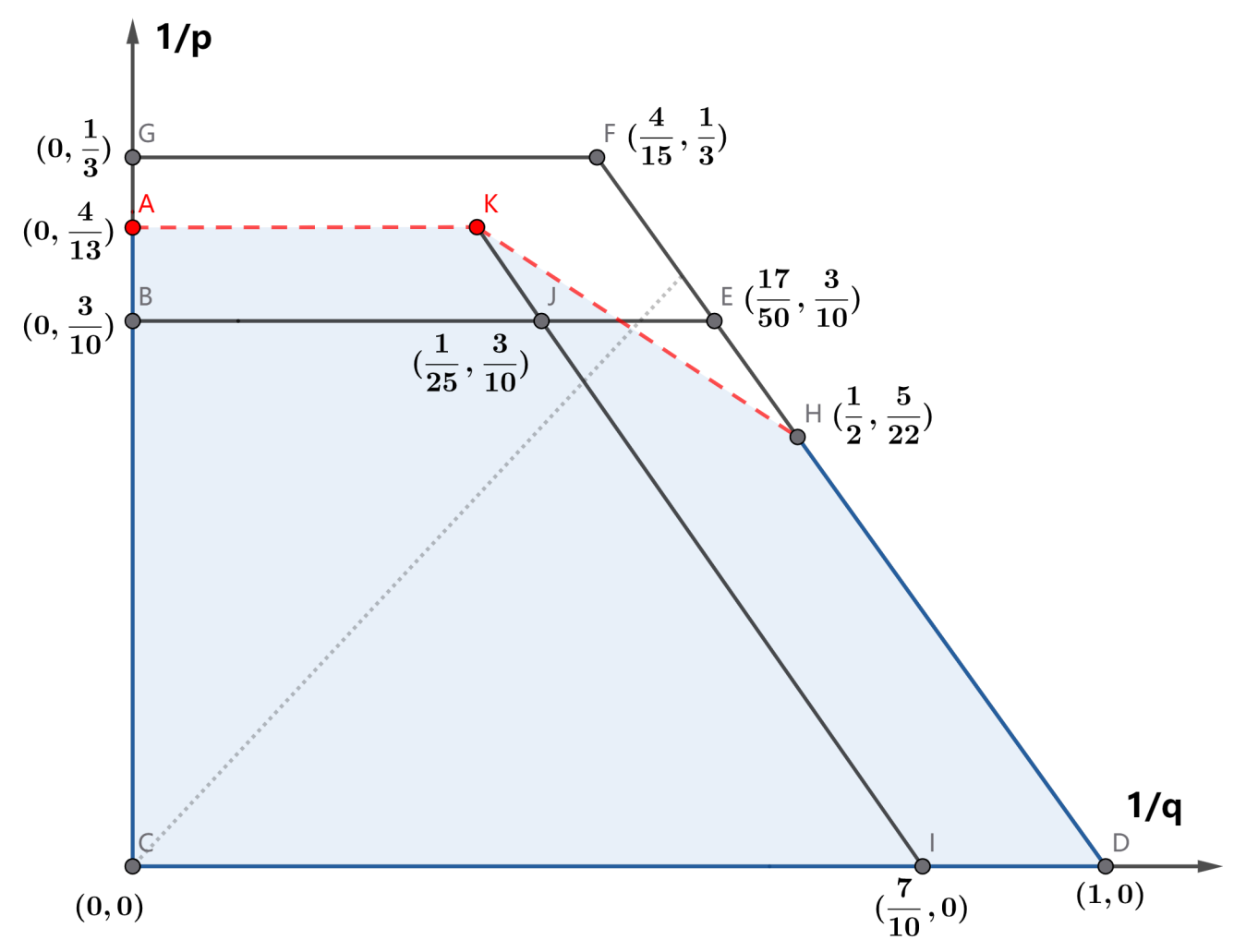}
				\caption{Range for $\phi_{1}(\xi_{1})-\phi_{2}(\xi_{2})$}
				\label{fig:restrictionrange4}
			\end{figure}
			\begin{itemize}
				\item Let $\varphi_{1}$ be of finite type $2$ and $\varphi_{2}$ be of finite type $3$, then for $S:=\lbrace(\xi_{1},\xi_{2},\phi_{1}(\xi_{1})+\phi_{2}(\xi_{2})):(\xi_{1},\xi_{2})\in [0,1]^{2}\rbrace$, we have the local restriction estimate
				\begin{equation*}
					\left\|E_{S} g\right\|_{L^{22/7}(B_{R}^{3})}\lesssim_{\varepsilon} R^{\varepsilon}\|g\|_{L^{\infty}([0,1]^{2})}, \quad \forall \varepsilon>0.
				\end{equation*}
				In the same spirit, for $S:=\lbrace(\xi_{1},\xi_{2},\phi_{1}(\xi_{1})-\phi_{2}(\xi_{2})):(\xi_{1},\xi_{2})\in [0,1]^{2}\rbrace$, $\frac{3}{10}\le \frac{1}{p}<\frac{4}{13}$, $\frac{11}{2p}+\frac{3}{4}\le \frac{5}{2q'}$, we have the local restriction estimate 
				\begin{equation*}
					\left\|E_{S} g\right\|_{L^{p}(B_{R}^{3})}\lesssim_{\varepsilon} R^{\varepsilon}\|g\|_{L^{q}([0,1]^{2})}, \quad \forall \varepsilon>0.
				\end{equation*}
				\item For $S:=\lbrace(\xi_{1},\xi_{2},\phi_{1}(\xi_{1})+\phi_{2}(\xi_{2})):(\xi_{1},\xi_{2})\in [0,1]^{2}\rbrace$, Buschenhenke, M\"{u}ller, and Vargas proved the case when ($\frac{1}{q},\frac{1}{p}$) lies in the trapezoid BCDE (excluding the segment $\overline{BE}$) in Figure 7. Now, by interpolation, we can establish the case in the quadrilateral ACDE (excluding the segments $\overline{AE}$). Moreover, the conjectured range for the prototypical case is the trapezoid GCDF (excluding the segment $\overline{GF}$).
				\vspace{5pt}
				\item For $S:=\lbrace(\xi_{1},\xi_{2},\phi_{1}(\xi_{1})-\phi_{2}(\xi_{2})):(\xi_{1},\xi_{2})\in [0,1]^{2}\rbrace$, we have the Tomas--Stein type estimate (the point H in Figure 8) instead. Then, still by interpolation, we can prove the case when ($\frac{1}{q},\frac{1}{p}$) lies in the pentagon ACDHK (excluding the segments $\overline{AK}$, $\overline{KH}$).
			\end{itemize}
		\end{rem}

		\vspace{5pt}
		For the rest of this section, we give a simple proof of the non--endpoint case in Theorem \ref{2D} using the same rescaling technique. 
		\begin{proof}[Proof of Theorem \ref{2D}]
		    For simplicity, we only deal with the prototypical case, i.e., $\gamma:=\lbrace(t,t^{3}):t\in [-1,1]\rbrace$. 
		    	
		    Let $Q(R)$ be the optimal constant such that 
		    	\[
		    	\|E_{\gamma}g\|_{L^{p}(B_{R}^{2})}
		    	\le Q(R)\|g\|_{L^{q}([-1,1])}, \quad \forall g\in L^{q}([-1,1]).
		    	\]
		    	
		    Next, we can dyadically decompose $[-1,1]$ into the sets $\Omega_{\lambda}^{\pm}$.
		    \begin{equation*}
		    	[-1,1]:=\Omega_{0}\cup\left(\bigcup_{\lambda} \Omega_{\lambda}^{+}\right)\cup \left(\bigcup_{\lambda} \Omega_{\lambda}^{-}\right), \quad 1\le j\le \left[\frac{1}{3}\log_{2}(K)\right],
		    \end{equation*}
		    where $\Omega_{0}:=[-K^{-1/3},K^{-1/3}]$, $\Omega_{\lambda}^{+}:=[\lambda,2\lambda],\; \Omega_{\lambda}^{-}:=[-2\lambda,-\lambda]$, $\lambda=2^{j-1}K^{-1/3}$.
		    
		    On $\Omega_{0}$, we can use the induction on scale as before and obtain 
		    \begin{equation*}
		    	\|E g_{\Omega_{0}}\|_{L^{p}(B_{R}^{2})}\lesssim Q\left(\frac{R}{K^{1/3}}\right)K^{-\frac{1}{3q'}+\frac{4}{3p}}\|g\|_{L^{q}([-1,1])}.
		    \end{equation*}
		    On $\Omega_{\lambda}^{+}$, we first use rescaling to obtain
		    \begin{equation*}
		    	\Big| E g_{\Omega_{\lambda}^{+}}(y)\Big|=\Big|E_{[0,1]}^{\gamma}\widetilde{g}(\widetilde{y})\Big|,
		    \end{equation*}
		    where we still use the change of variables $t:=\lambda u+\lambda$, with
		    \begin{equation*}
		    	\left\{
		    	\begin{aligned}
		    		& \widetilde{g}(u)=\lambda g(\lambda u+\lambda),\; \gamma(u)=3u^{2}+u^{3},\\
		    		& \widetilde{y}=(\widetilde{y}_{1},\widetilde{y}_{2})=(\lambda y_{1}+3\lambda^{3}y_{2},\lambda^{3}y_{2}).\\
		    	\end{aligned}
		    	\right.
		    \end{equation*}
		    Then, applying the 2D restriction conjecture, we deduce
		    \begin{equation*}
		    	\|E g_{\Omega_{\lambda}^{+}}\|_{L^{p}(B_{R}^{2})}\lesssim_{\lambda,\varepsilon}R^{\varepsilon}\|g\|_{L^{q}([-1,1])}, \quad \forall \varepsilon>0.
		    \end{equation*}
		    Similarly, we can also derive
		    \begin{equation*}
		    	\|E g_{\Omega_{\lambda}^{-}}\|_{L^{p}(B_{R}^{2})}\lesssim_{\lambda,\varepsilon}R^{\varepsilon}\|g\|_{L^{q}([-1,1])}, \quad \forall \varepsilon>0.
		    \end{equation*}
		    Combining all the estimates above, we obtain
		    \begin{equation*}
		        Q(R)\lesssim Q\left(\frac{R}{K^{1/3}}\right)K^{-\frac{1}{3q'}+\frac{4}{3p}}+C(K)R^{\varepsilon}, \quad \forall \varepsilon>0,
		    \end{equation*}
		    which directly implies $Q(R)\lesssim_{\varepsilon}R^{\varepsilon}, \forall \varepsilon>0$. Note that the exponent $-\frac{1}{3q'}+\frac{4}{3p}\le 0$. 
		 \end{proof}
		 \vspace{10pt}
		 \section{Applications to dispersive equations}
		 In this section, we will apply our restriction estimate for surfaces of finite type $3$ to the analysis of the discrete nonlinear Schr\"{o}dinger equations (DNLS). 
		 
		 The discrete nonlinear equations (or, to be specific, nonlinear equations on lattices) have served as models for many important physical phenomena. For example, the non--exponential energy relaxation in solids \cite{31} and the self--focusing and collapse of Langmuir waves in plasmas \cite{34}. For further physical interest, we refer to \cite{32,33}. On the other hand, DNLS also has its own mathematical interest. In fact, one often discretizes PDEs for computer simulations or approximations. Then, establishing the theory of the continuum limit, i.e., whether the solutions of the discrete equations converge to the solutions of the original equations, is very essential. For more recent progress on continuum limit theories, see \cite{35,36,37,38}.
		 
		 Now we introduce the DNLS on the lattice $\mathbb{Z}^{d}$ as follows.
		 \begin{equation}\label{nonlinear}
		 	\left\{
		 	\begin{aligned}
		 		& i\partial_{t} u(x,t)+ \Delta_{disc} u(x,t) =\mu |u|^{\alpha-1}u,  \\
		 		& u(x,0) = f(x), \quad (x,t)\in\Z^d\times \R,
		 	\end{aligned}
		 	\right.
		 \end{equation}
		 where $\mu=\pm 1$, $\alpha>1$, and the discrete Laplacian $\Delta_{disc}$ is defined as
		 \begin{equation*}
		 	\Delta_{disc}u(x):=\sum_{i=1}^{d}\left(u(x+e_{j})+u(x-e_{j})-2u(x)\right), \quad x\in \mathbb{Z}^{d},
		 \end{equation*}
		 with the standard basis $\lbrace e_{j}\rbrace_{j=1}^{d}$ of $\mathbb{Z}^{d}$.

         To study the behavior of DNLS (\ref{nonlinear}) in the frequency space, we need to use the following discrete Fourier transform and its inverse.
         \begin{equation*}
         	\mathcal{F}_{disc}u(\xi):=\sum_{x\in \Z^{d}}u(x)e^{-ix\xi}, \quad \xi\in\T^{d},
         \end{equation*}
         \begin{equation*}
         	\mathcal{F}_{disc}^{-1}v(x):=\frac{1}{(2\pi)^{d}}\int_{\T^{d}}v(\xi)e^{ix\xi}d\xi, \quad x\in \Z^{d}.
         \end{equation*}
         Then, we can rewrite the discrete Laplacian $\Delta_{disc}$ as 
         \begin{equation}\label{propagator}
         	-\Delta_{disc} u=\mathcal{F}_{disc}^{-1}\left\{\left[\sum_{i=1}^{d}4\sin^{2}(\frac{\xi_{i}}{2})\right]\mathcal{F}_{disc}u\right\}\triangleq \mathcal{F}_{disc}^{-1}\left\{\omega(\xi)\mathcal{F}_{disc}u\right\}.
         \end{equation}
         To properly set up our discussion, we introduce the following $\ell^{p}$--spaces, for $1\le p\le \infty$.
         \begin{equation*}
         	\ell^{p}(\Z^{d})
         	:= \left\lbrace
         	u:\Z^{d}\to\C \;\Bigg|\;
         	\left( \sum_{x\in\Z^{d}} |u(x)|^{p} \right)^{\frac{1}{p}} < \infty
         	\right\rbrace,
         \end{equation*}
         with the $\ell^{p}$--norm 
         \begin{equation*}
         	\|u\|_{\ell^{p}(\mathbb{Z}^{d})}:=\left( \sum_{x\in\Z^{d}} |u(x)|^{p} \right)^{\frac{1}{p}}.
         \end{equation*}
         
         Recall that a basic tool in studying dispersive equations is the Strichartz estimate. For example, we consider the following inhomogeneous free Schr\"{o}dinger equations
         \begin{equation}\label{NLS}
         	\left\{
         	\begin{aligned}
         		& i\partial_{t} u(x,t)+ \Delta u(x,t) =F(x,t),  \\
         		& u(x,0) = f(x), \quad (x,t)\in\R^d\times \R.
         	\end{aligned}
         	\right.
         \end{equation}
         Strichartz (see \cite{18}) applied the works of Stein, Tomas and Segal (see \cite{19,20,21}) and was the first to establish 
         \begin{equation*}
         	\|u\|_{L_{x,t}^{\frac{2(d+2)}{d}}(\R^{d}\times\R)}\lesssim \|f\|_{L^{2}(\R^{d})}+\|F\|_{L_{x,t}^{\frac{2(d+2)}{d+4}}(\R^{d}\times\R)}.
         \end{equation*}
         Following those fundamental results, there has been a large body of work studying Strichartz estimates in more general spaces and applying them to the well--posedness theory of dispersive equations (see \cite{22,23}). Notably, Keel and Tao proposed a very general abstract framework for dispersive equations in \cite{24}. By utilizing this framework, we can easily derive the following Strichartz estimate for DNLS (see \cite{25}). The author also uses the following estimate to prove certain scattering properties of DNLS in \cite{26}. 
         
         \begin{prop}\label{Strichartz}
         	We call the exponent pair $(q,r)$ admissible, if the pair satisfies
         	\begin{equation*}
         		q,r\ge 2, \quad (q,r,d)\ne(2,\infty,3),\quad \frac{1}{q}+\frac{d}{3r}\le \frac{d}{6}.
         	\end{equation*}
         	Then, we have the following three types of Strichartz estimates 
         	\begin{equation}\label{app}
         		\|e^{it\Delta_{disc}}f\|_{L_{t}^{q}\ell_{x}^{r}(\mathbb{R}\times \Z^{d})}\lesssim \|f\|_{\ell^{2}(\Z^{d})},
         	\end{equation}
         	\begin{equation*}
         		\|\int_{\mathbb{R}}e^{-s\Delta_{disc}}F(\cdot,s)ds\|_{\ell^{2}(\Z^{d})}\lesssim \|F\|_{L_{t}^{q'}\ell_{x}^{r'}(\mathbb{R}\times \Z^{d})},
         	\end{equation*}
         	\begin{equation}\label{inhomo}
         		\|\int_{s<t}e^{i(t-s)\Delta_{disc}}F(\cdot,s)ds\|_{L_{t}^{q}\ell_{x}^{r}(\mathbb{R}\times \Z^{d})}\lesssim \|F\|_{L_{t}^{\widetilde{q}'}\ell_{x}^{\widetilde{r}'}(\mathbb{R}\times \Z^{d})},
         	\end{equation}
         	where $(q,r), (\widetilde{q},\widetilde{r})$ are admissible exponent pairs. 
         \end{prop}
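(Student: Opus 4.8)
The plan is to obtain all three estimates from the abstract Strichartz machinery of Keel and Tao \cite{24}, so that the task reduces to verifying its two hypotheses for the propagator $U(t):=e^{it\Delta_{disc}}$ on $\Z^{d}$ with counting measure: an energy bound and a pointwise dispersive decay. The energy bound is immediate: by \eqref{propagator}, $U(t)$ is the Fourier multiplier on $\ell^{2}(\Z^{d})$ with symbol $e^{-it\omega(\xi)}$, and since $|e^{-it\omega(\xi)}|\equiv 1$, Plancherel on $\T^{d}$ gives $\|U(t)f\|_{\ell^{2}}=\|f\|_{\ell^{2}}$; in particular $U(t)U(s)^{*}=U(t-s)$ is unitary on $\ell^{2}$.

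\textbf{Dispersive estimate (the substantive step).} I would prove $\|U(t-s)\|_{\ell^{1}(\Z^{d})\to\ell^{\infty}(\Z^{d})}\lesssim|t-s|^{-d/3}$. Writing $(U(t)f)(x)=\sum_{y}K_{t}(x-y)f(y)$ with $K_{t}(x)=(2\pi)^{-d}\int_{\T^{d}}e^{ix\cdot\xi-it\omega(\xi)}\,d\xi$, it suffices to bound $\sup_{x\in\Z^{d}}|K_{t}(x)|\lesssim(1+|t|)^{-d/3}$. Since $\omega(\xi)=\sum_{i=1}^{d}\omega_{0}(\xi_{i})$ with $\omega_{0}(s)=4\sin^{2}(s/2)=2-2\cos s$, the kernel factors, $K_{t}(x)=\prod_{i=1}^{d}k_{t}(x_{i})$ with $k_{t}(n)=\frac{1}{2\pi}\int_{\T}e^{i(ns-t\omega_{0}(s))}\,ds$, so it is enough to prove the one--dimensional bound $\sup_{n\in\Z}|k_{t}(n)|\lesssim(1+|t|)^{-1/3}$. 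For $|t|\le1$ this is the trivial bound $|k_{t}(n)|\le1$. For $|t|\ge1$ write the phase as $t\,\psi(s)$, $\psi(s)=\tfrac{n}{t}s-\omega_{0}(s)$, so that $\psi''(s)=2\cos s$ and $\psi'''(s)=-2\sin s$ never vanish simultaneously on $\T$ (the graph of $\omega$ is a separable surface of finite type $3$, degenerating exactly at $\xi_{i}=\pm\pi/2$). For a parameter $\delta\in(0,1)$, on the $O(1)$ subintervals where $|\psi''|=2|\cos s|\ge\delta$ van der Corput's second--derivative estimate (applied to $t\psi$) gives a contribution $\lesssim(|t|\delta)^{-1/2}$, while $\{\,|\cos s|<\delta/2\,\}$ is a union of $O(1)$ intervals of length $O(\delta)$ around $s=\pm\pi/2$ and contributes $\lesssim\delta$ trivially; optimizing $\delta\sim|t|^{-1/3}$ yields $|k_{t}(n)|\lesssim|t|^{-1/3}$ uniformly in $n$. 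Hence $|K_{t}(x)|\lesssim(1+|t|)^{-d/3}\le|t|^{-d/3}$ and the dispersive estimate follows.

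\textbf{Conclusion via Keel--Tao, and the main obstacle.} With $\|U(t)U(s)^{*}\|_{\ell^{2}\to\ell^{2}}\lesssim1$ and $\|U(t)U(s)^{*}\|_{\ell^{1}\to\ell^{\infty}}\lesssim|t-s|^{-d/3}$ in hand, the Keel--Tao theorem \cite{24}, applied with decay exponent $\sigma=d/3$, produces \eqref{app} together with its adjoint form and the retarded inhomogeneous estimate \eqref{inhomo} for every pair of admissible exponents $(q,r)$, $(\widetilde{q},\widetilde{r})$: their $\sigma$--admissibility condition ($q,r\ge2$, $\frac1q+\frac{\sigma}{r}\le\frac{\sigma}{2}$, $(q,r,\sigma)\ne(2,\infty,1)$) specializes with $\sigma=d/3$ precisely to the admissibility in the statement ($q,r\ge2$, $\frac1q+\frac{d}{3r}\le\frac{d}{6}$, $(q,r,d)\ne(2,\infty,3)$), and the truncation to $\{s<t\}$ in \eqref{inhomo} is handled inside their argument (or, away from the double endpoint, by the Christ--Kiselev lemma). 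I expect the dispersive step to be the only real obstacle: extracting the sharp $|t|^{-d/3}$ decay rather than the $|t|^{-d/2}$ one would get from a nondegenerate phase, and making the van der Corput bound uniform in the translation parameter $n$ at the finite--type--$3$ degeneracies $\xi_{i}=\pm\pi/2$; the remaining content is Plancherel and a citation to \cite{24}.
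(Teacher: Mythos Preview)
Your proposal is correct and follows exactly the route the paper indicates: the paper does not give a proof of this proposition but simply attributes it to the Keel--Tao framework \cite{24} (with details deferred to \cite{25}), and your sketch supplies precisely those details --- unitarity on $\ell^{2}$ via Plancherel, the $|t|^{-d/3}$ dispersive bound from the factorized kernel and a van der Corput argument at the finite--type--$3$ points $\xi_{i}=\pm\pi/2$, and then invocation of \cite{24} with $\sigma=d/3$. One small remark: the sharp Keel--Tao conclusion lives on the line $\tfrac{1}{q}+\tfrac{\sigma}{r}=\tfrac{\sigma}{2}$, and the full region with $\le$ in the statement is then obtained by the trivial embedding $\ell^{r_{1}}(\Z^{d})\hookrightarrow\ell^{r_{2}}(\Z^{d})$ for $r_{1}\le r_{2}$; you may want to say this explicitly.
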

         To establish the well--posedness theory with higher or lower integrability (compared with $\ell^{2}$), we may need some improvements in the following two directions. One direction is to establish an estimate (\ref{app}) on the Fourier--Lebesgue space $\widehat{L^{p}}$ using Fourier restriction. Another direction is enlarging the range of pairs $(q,r), (\widetilde{q},\widetilde{r})$ in estimate (\ref{inhomo}). 
         
         In this paper, we follow the former direction and introduce the Fourier--Lebesgue space $\widehat{L^{p}}$ as follows.
         
         \begin{defi}
         	For $1\le p \le \infty$, the Fourier--Lebesgue space $\widehat{L^{p}}(\mathbb{Z}^{d})$ is defined as 
         	\begin{equation*}
         		\widehat{L^{p}}(\mathbb{Z}^{d}):=\left\{u:\Z^{d}\to \C\Big| \|\mathcal{F}_{disc}(u)\|_{L^{p'}(\T^{d})}<\infty\right\},
         	\end{equation*}
         	with the corresponding norm 
         	\begin{equation*}
         	\|u\|_{\widehat{L^{p}}(\mathbb{Z}^{d})}:=\|\mathcal{F}_{disc}(u)\|_{L^{p'}(\T^{d})}.
         	\end{equation*}
         \end{defi}
         \begin{rem}
         	From the Hausdorff--Young inequality, we see $\ell^{p}(\Z^{d})\subseteq \widehat{L^{p}}(\Z^{d})$, if $1\le p\le2$. On the other hand, we also have $\widehat{L^{p}}(\Z^{d})\subseteq\widehat{L^{q}}(\Z^{d})\subseteq\widehat{L^{2}}(\Z^{d})=\ell^{2}(\Z^{d})$, if $1\le p\le q\le 2$, from H\"{o}lder's inequality.
         \end{rem}
         \begin{rem}
            In fact, the Fourier--Lebesgue space has a deep connection with the well--known $X_{s,b}^{r}$ space, which is firstly introduced by Bourgain (see \cite{27}) and extensively studied and used in the well--posedness theory of nonlinear dispersive equations (see \cite{28,29}). A simple example of such a connection is the following inequality 
            \begin{equation*}
            	\|e^{it\phi(D)}u_{0}\|_{Y}\lesssim \|u_{0}\|_{\widehat{L^{r}}}, \quad \forall u_{0}\in \widehat{L^{r}},
            \end{equation*}
            can imply the following inequality, for all $b>\frac{1}{r}$,
            \begin{equation*}
            	\|u\|_{Y}\lesssim \|u\|_{X_{0,b}^{r}}, \quad \forall u\in X_{0,b}^{r},
            \end{equation*}
            where $Y$ is some relatively arbitrary Banach space (see \cite{30}).
         \end{rem}
         To connect our Fourier restriction estimate for surfaces/curves of finite type $3$ with the Strichartz estimates for DNLS, we make the following important observations.
         
          The propagator $e^{it\Delta_{disc}}$ can be more explicitly written as 
         \begin{equation*}
         	\lbrace e^{it\Delta_{disc}}f(k)\rbrace_{k\in \Z^{d}}=\left\{\frac{1}{(2\pi)^{d}}\int_{\T^{d}}e^{ik\xi}e^{-it\omega(\xi)}\mathcal{F}_{disc}f(\xi)d\xi\right\}_{k\in\Z^{d}}\triangleq E^{disc}(\mathcal{F}_{disc}f),
         \end{equation*}
         where $E^{disc}$ maps functions on $\T^{d}$ into functions on $\Z^{d}\times \R$. We may call this operator $E^{disc}$ the ``discrete extension operator" due to its interesting connections with the extension operator $E_{S}$. 
         
        \begin{lemma}\label{equi}
             The following four statements are equivalent, where $\mathcal{F}_{\R^{d}}$ denotes the Fourier transform on $\R^{d}$ and the hypersurface $ S \subseteq \T^{d}\times \R$ is defined as $S:=\lbrace (\xi,t)\in \T^{d}\times \R |\; t=-\omega(\xi)\rbrace$, equipped with surface measure $d\sigma$.
             \begin{equation*}
             	(I):  \quad \left\|E^{disc} f\right\|_{L_{x,t}^{p}(\Z^{d}\times\R)}\lesssim \|f\|_{L^{q}(\T^{d})}, \quad \forall f\in L^{q}(\T^{d});
             \end{equation*}
             \begin{equation*}
             	(II): \quad \|\mathcal{F}_{\R}\mathcal{F}_{disc} g\|_{L^{q'}(S;d\sigma)}\lesssim \|g\|_{L_{x,t}^{p'}(\Z^{d}\times \R)}, \quad \forall g\in L^{p'}_{x,t}(\Z^{d}\times\R);
             \end{equation*}
             \begin{equation*}
             	(III): \quad \|E_{S} F\|_{L_{x,t}^{p}(\R^{d}\times\R)}\lesssim \|F\|_{L^{q}(\T^{d})}, \quad \forall F\in L^{q}(\T^{d});
             \end{equation*}
             \begin{equation*}
                (IV): \quad \|\mathcal{F}_{\R}\mathcal{F}_{\R^{d}} G\|_{L^{q'}(S;d\sigma)}\lesssim \|G\|_{L_{x,t}^{p'}(\R^{d}\times \R)}, \quad \forall G\in L^{p'}_{x,t}(\R^{d}\times\R).
             \end{equation*}
        \end{lemma}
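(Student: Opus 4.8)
The plan is to prove the four-fold equivalence through the chain $(I)\Leftrightarrow(II)$, $(III)\Leftrightarrow(IV)$, $(II)\Leftrightarrow(IV)$: the first two are routine duality, and only the last uses the specific structure of $S$ (though it too is elementary). First I would record a normalisation that lets me treat the ``surface integrals'' over $S$ as Lebesgue integrals over $\T^{d}$. Since $\omega\in C^{\infty}(\T^{d})$ and $\T^{d}$ is compact, the graph parametrisation $\xi\mapsto(\xi,-\omega(\xi))$ of $S$ has Jacobian $J(\xi)=\sqrt{1+|\nabla\omega(\xi)|^{2}}$ with $1\le J\le C$, so $d\sigma=J\,d\xi\sim d\xi$ and every factor $J^{\pm1}$ arising below is a bounded invertible multiplier on $L^{r}(\T^{d})$. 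Hence in each of the four statements one may replace $L^{r}(S;d\sigma)$ by $L^{r}(\T^{d};d\xi)$ and discard such factors; moreover, by a routine density reduction one may assume all functions are nice (Schwartz, or bounded with compact support), so that the Fourier manipulations below are legitimate.

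For $(I)\Leftrightarrow(II)$ and $(III)\Leftrightarrow(IV)$ I would compute the adjoints. Pairing $E^{disc}f$ against $g\in L_{x,t}^{p'}(\Z^{d}\times\R)$ and using Fubini gives $(E^{disc})^{*}g(\xi)=c_{d}(\mathcal{F}_{\R}\mathcal{F}_{disc}g)(\xi,-\omega(\xi))$, which is exactly the operator in $(II)$; pairing $E_{S}F$ against $G\in L_{x,t}^{p'}(\R^{d}\times\R)$ gives $(E_{S})^{*}G(\xi)=c_{d}(\mathcal{F}_{\R}\mathcal{F}_{\R^{d}}G)(\xi,-\omega(\xi))$ up to the bounded factor $J$, which is the operator in $(IV)$. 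The standard fact that $T\colon L^{q}\to L^{p}$ is bounded iff $T^{*}\colon L^{p'}\to L^{q'}$ is bounded (in the relevant range of $p,q$) then gives the two equivalences.

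The core is $(II)\Leftrightarrow(IV)$, which I would deduce from two elementary ingredients: (a) the sharp cube Fourier multiplier $P$ --- projection in the $x$--variable onto the frequency cube $\T^{d}=[-\pi,\pi]^{d}$ --- is bounded on $L^{r}(\R^{d})$ for $1<r<\infty$, being a product of one--dimensional sharp interval multipliers (each bounded on $L^{r}$ for $1<r<\infty$); and (b) the elementary half of the Plancherel--P\'olya inequality, $\|u|_{\Z^{d}}\|_{\ell^{r}(\Z^{d})}\lesssim\|u\|_{L^{r}(\R^{d})}$ whenever $\widehat{u}$ is compactly supported, which follows by writing $u=u*\Psi$ with Schwartz $\Psi$ and $\widehat{\Psi}\equiv1$ on $\supp\widehat{u}$ and applying Young's inequality on $\ell^{r}(\Z^{d})$. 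For $(IV)\Rightarrow(II)$: given $g$ on $\Z^{d}\times\R$, set $G(x,t):=g(\lfloor x\rfloor,t)$ (coordinatewise floor); then $\|G\|_{L^{p'}(\R^{d}\times\R)}=\|g\|_{L^{p'}(\Z^{d}\times\R)}$ and a direct computation yields $\mathcal{F}_{\R^{d}}G(\xi,t)=m(\xi)\mathcal{F}_{disc}g(\xi,t)$ with $m(\xi)=\prod_{j=1}^{d}\frac{1-e^{-i\xi_{j}}}{i\xi_{j}}$, whose modulus is bounded above and below by positive constants on $\T^{d}$; therefore $(\mathcal{F}_{\R}\mathcal{F}_{disc}g)|_{S}=m|_{S}^{-1}(\mathcal{F}_{\R}\mathcal{F}_{\R^{d}}G)|_{S}$ and $(IV)$ applied to $G$ gives $(II)$. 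For $(II)\Rightarrow(IV)$: given $G$ on $\R^{d}\times\R$, set $g(k,t):=(PG(\cdot,t))(k)$; ingredients (a) and (b) give $\|g\|_{L^{p'}(\Z^{d}\times\R)}\lesssim\|G\|_{L^{p'}(\R^{d}\times\R)}$, while Poisson summation (the Shannon sampling identity) for the band--limited function $PG(\cdot,t)$ gives $\mathcal{F}_{disc}g(\xi,t)=\widehat{PG(\cdot,t)}(\xi)=\mathcal{F}_{\R^{d}}G(\xi,t)$ for a.e. $\xi\in\T^{d}$, so $(\mathcal{F}_{\R}\mathcal{F}_{disc}g)|_{S}=(\mathcal{F}_{\R}\mathcal{F}_{\R^{d}}G)|_{S}$ and $(II)$ applied to $g$ gives $(IV)$.

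I do not expect a conceptually hard step: the difficulties are bookkeeping. The points needing care are pinning down the adjoints and the surface measure in the second step (which is the whole purpose of the normalisation); justifying the sampling identity and the pointwise restriction $PG(\cdot,t)|_{\Z^{d}}$ via the density reduction --- it suffices to take $G$ bounded with compact support, so that $PG(\cdot,t)\in L^{2}(\R^{d})$ and the identity is simply the expansion of $\widehat{PG(\cdot,t)}|_{\T^{d}}$ in the orthonormal system $\{e^{-ik\xi}\}_{k\in\Z^{d}}$ of $L^{2}(\T^{d})$; and the boundedness of the sharp cube multiplier, which forces $1<p<\infty$ (the range used here). The extreme exponents $p\in\{1,\infty\}$, should they be wanted, would instead require a smooth Fourier cutoff in place of $P$ together with a separate treatment of the resulting periodisation error --- the one place where a little genuine work would be needed.
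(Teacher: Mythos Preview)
Your argument is correct, but the route differs from the paper's. Both proofs dispatch $(I)\Leftrightarrow(II)$ and $(III)\Leftrightarrow(IV)$ by duality. You then close the loop by proving $(II)\Leftrightarrow(IV)$ directly: $(IV)\Rightarrow(II)$ via the piecewise--constant extension $G(x,t)=g(\lfloor x\rfloor,t)$, and $(II)\Rightarrow(IV)$ via the sharp cube multiplier plus Plancherel--P\'olya sampling. The paper instead closes the loop with $(I)\Rightarrow(III)$ and $(IV)\Rightarrow(II)$. For $(I)\Rightarrow(III)$ it uses a translation--averaging trick: decompose $\R^{d}=\bigcup_{k\in\Z^{d}}(k+\T^{d})$, observe that $E_{S}F(x+k,t)=(2\pi)^{d}\,E^{disc}(e^{ix\cdot}F)(k,t)$, and integrate the resulting identity $\|E_{S}F\|_{L^{p}(\R^{d}\times\R)}^{p}=(2\pi)^{dp}\int_{\T^{d}}\|E^{disc}(e^{ix\cdot}F)\|_{L^{p}(\Z^{d}\times\R)}^{p}\,dx$ against $(I)$. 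For $(IV)\Rightarrow(II)$ the paper formally takes $G=\sum_{k}\delta_{k}(x)g(k,t)$ (with an approximation--to--identity gloss); your piecewise--constant construction is in fact a clean rigorous variant of exactly this step. The trade--off is that your $(II)\Rightarrow(IV)$ imports the boundedness of the sharp cube multiplier and hence the restriction $1<p<\infty$, whereas the paper's averaging argument for $(I)\Rightarrow(III)$ uses only Fubini and works for every $p$ with no multiplier theory at all.
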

        \begin{proof}
        	From duality and Plancherel's theorem, we can easily verify that (I) $\Leftrightarrow$ (II) and (III) $\Leftrightarrow$ (IV). It remains to verify that (I) $\Rightarrow$ (III) and (IV) $\Rightarrow$ (II).
        	
        	For (I) $\Rightarrow$ (III), we can express $\|E_{S}F\|_{L_{x,t}^{p}(\R^{d}\times\R)}$ as the average of translations.
        	\begin{equation*}
        		\|E_{S}F\|_{L_{x,t}^{p}(\R^{d}\times\R)}^{p}=\int_{\R}\int_{\R^{d}}\bigg|\int_{\T^{d}}e^{ix\xi}e^{-it\omega(\xi)}F(\xi)d\xi\bigg|^{p}dxdt
        	\end{equation*}
        	\begin{equation*}
        		=\int_{\R}dt\sum_{k\in \Z^{d}}\int_{\T^{d}+\lbrace k\rbrace}\bigg|\int_{\T^{d}}e^{ix\xi}e^{-it\omega(\xi)}F(\xi)d\xi\bigg|^{p}dx
        	\end{equation*}
        	\begin{equation*}
        		=\int_{\R}dt\sum_{k\in \Z^{d}}\int_{\T^{d}}\bigg|\int_{\T^{d}}e^{ix\xi}e^{-it\omega(\xi)}e^{ik\xi}F(\xi)d\xi\bigg|^{p}dx
        	\end{equation*}
        	\begin{equation*}
        		=\int_{\T^{d}}dx\int_{\R}\sum_{k\in \Z^{d}}\bigg|\int_{\T^{d}}e^{ik\xi}e^{-it\omega(\xi)}e^{ix\xi}F(\xi)d\xi\bigg|^{p}dt
        	\end{equation*}
        	\begin{equation*}
        		=(2\pi)^{dp}\int_{\T^{d}}\|E^{disc}(e^{ix\xi}F)\|_{L_{x,t}^{p}(\Z^{d}\times\R)}^{p}dx\lesssim \|F\|_{L^{q}(\T^{d})}^{p}.
        	\end{equation*}
        	For (IV) $\Rightarrow$ (II), we can take $G:=\sum_{k\in \Z^{d}}\delta_{k}(x)g(k,t), k\in \Z^{d}$, where $\delta_{k}$ is the Dirac measure at $k$. Strictly, we can replace it with $\sum_{k\in \Z^{d}}\varphi_{\varepsilon}(x-k)g(k,t)$ and let $\varepsilon\to 0$, where $\lbrace\varphi_{\varepsilon}\rbrace_{\varepsilon>0}$ is an approximation to the identity. Then, we directly see that 
        	\begin{equation*}
        		\|\mathcal{F}_{\R}\mathcal{F}_{\R^{d}} G\|_{L^{q'}(S;d\sigma)}=\|\mathcal{F}_{\R}\mathcal{F}_{disc} g\|_{L^{q'}(S;d\sigma)},
        	\end{equation*}
        	\begin{equation*}
        		\|G\|_{L^{p'}_{x,t}(\R^{d}\times \R)}=\|g\|_{L^{p'}_{x,t}(\Z^{d}\times \R)}.
        	\end{equation*}
        \end{proof}
        \begin{rem}
        	From the proof, the equivalence among these four statements actually holds for any compact, smooth hypersurface in $\T^{d}\times \R$. Therefore, we can freely reduce our discussions to the classical cases. 
        \end{rem}
        Using Lemma \ref{equi}, we can apply Theorem \ref{main}, Theorem \ref{main2} and Theorem \ref{2D} to improve the Strichartz estimate (\ref{app}).
        \begin{coro}\label{res+stri}
        	For $d=1$, $(\frac{1}{p},\frac{1}{q})$ belongs to the region in Figure 3, we have the following improved Strichartz estimate
        	\begin{equation*}
        		\|e^{it\Delta_{disc}}f\|_{L_{x,t}^{p}(\Z\times\R)}\lesssim \|f\|_{\widehat{L^{q'}}(\Z)}.
        	\end{equation*}
        	Similarly, for $d=2$, $(\frac{1}{p},\frac{1}{q})$ belongs to the region in Figure 2, we have the following improved Strichartz estimate
        	\begin{equation*}
        		\|e^{it\Delta_{disc}}f\|_{L_{x,t}^{p}(\Z^{2}\times\R)}\lesssim \|f\|_{\widehat{L^{q'}}(\Z^{2})}.
        	\end{equation*}
        \end{coro}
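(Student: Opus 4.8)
The plan is to deduce both estimates from Lemma~\ref{equi} together with the restriction theorems already proved in the paper. Setting $h:=\mathcal{F}_{disc}f$, the function $h$ ranges over all of $L^{q}(\T^{d})$, one has $\|f\|_{\widehat{L^{q'}}(\Z^{d})}=\|h\|_{L^{q}(\T^{d})}$ and $e^{it\Delta_{disc}}f=E^{disc}h$, so the asserted Strichartz bound is exactly statement $(I)$ of Lemma~\ref{equi} for the hypersurface $S:=\{(\xi,-\omega(\xi)):\xi\in\T^{d}\}$; by that lemma it is equivalent to statement $(III)$, i.e.\ to the classical extension estimate $\|E_{S}F\|_{L^{p}(\R^{d+1})}\lesssim\|F\|_{L^{q}(\T^{d})}$. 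By $(\ref{propagator})$ we have $\omega(\xi)=\sum_{j=1}^{d}\psi(\xi_{j})$ with $\psi(s):=4\sin^{2}(s/2)=2(1-\cos s)$, so $S$ is the graph of the separable phase $-\sum_{j}\psi(\xi_{j})$ over $[-\pi,\pi]^{d}$, a compact smooth surface; it therefore suffices to prove this extension estimate on the range in the statement.

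First I would read off the geometry of $\psi$: since $\psi''(s)=2\cos s$ vanishes exactly at $s=\pm\pi/2$, where $\psi'''(\pm\pi/2)=\mp2\neq0$, the phase $\psi$ is of finite type $2$ away from $\pm\pi/2$ and of finite type $3$ at those two points. Hence for $d=1$ the curve $S$ has nonvanishing curvature except at $\xi=\pm\pi/2$, where it is of finite type $3$, and for $d=2$ the Gaussian curvature of $S$ is a nonzero multiple of $\psi''(\xi_{1})\psi''(\xi_{2})$, vanishing exactly on the four lines $\xi_{i}=\pm\pi/2$ and changing sign across each. In both cases I would decompose $[-\pi,\pi]^{d}$ into $O(1)$ axis-parallel pieces so that each is either bounded away from all degenerate lines, or a short box centred on a portion of a single degenerate line, or a short box centred on an intersection of two of them. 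Since only finitely many pieces occur, the triangle inequality for $E_{S}$ reduces matters to the individual pieces, and on each piece an affine change of variables in $\xi$ (with the associated linear change in the physical variables, which alter the relevant $L^{p}$ and $L^{q}$ norms only by fixed constants) normalises it to a model situation of one of the quoted theorems; the smallness of the higher Taylor coefficients required by the finite-type normalisation is bought precisely by taking these boxes short.

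For $d=1$ this produces intervals on which $|\psi''|\gtrsim1$, handled by the classical restriction theorem for plane curves of nonvanishing curvature \cite{40,41} on a range strictly containing the triangle of Figure~\ref{fig:2drange}, and short intervals centred at $\pm\pi/2$, which after normalisation become $\{(u,\varphi(u)):u\in[-1,1]\}$ with $\varphi$ a normalised finite-type-$3$ function, so Theorem~\ref{2D} applies; summing gives $(III)$ on the region of Figure~\ref{fig:2drange}, and Lemma~\ref{equi} gives the claimed $d=1$ estimate. For $d=2$ there are three kinds of pieces: (i) boxes bounded away from all four lines, where $S$ is locally non-degenerate — elliptic or hyperbolic — and, after normalisation, Theorem~\ref{WW} or Theorem~\ref{Hyper} gives a clean global $L^{22/7}\to L^{22/7}$ bound; (ii) boxes meeting exactly one degenerate line, where after centring one variable is of finite type $3$ and the other of finite type $2$, so $S$ is locally of the form $\{(\xi_{1},\xi_{2},\phi_{1}(\xi_{1})\pm\phi_{2}(\xi_{2}))\}$ with $\phi_{1}$ of finite type $2$ and $\phi_{2}$ of finite type $3$, covered by the estimates recorded in Remark~\ref{32} (proved via Proposition~\ref{23}); and (iii) boxes containing an intersection point $(\pm\pi/2,\pm\pi/2)$, where $S$ is locally $\{(\xi_{1},\xi_{2},\varphi_{1}(\xi_{1})\pm\varphi_{2}(\xi_{2}))\}$ with both $\varphi_{i}$ of finite type $3$, covered by Theorem~\ref{main} when the two cubic coefficients have equal sign and by Theorem~\ref{main2} when they have opposite sign. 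The estimates in (ii) and (iii) are local, with an $R^{\varepsilon}$ loss; I would remove the loss with the $\varepsilon$-removal lemma \cite{4,5} and then interpolate the resulting global $L^{22/7}\to L^{22/7}$ bound with the trivial $L^{1}\to L^{\infty}$ bound and the Tomas--Stein estimate for finite-type surfaces \cite{9,10,11} to recover the full two-parameter range on each piece. Because $-\psi'''(\pi/2)=2>0$ while $-\psi'''(-\pi/2)=-2<0$, opposite-sign configurations genuinely occur (for instance near $(\pi/2,-\pi/2)$, and the opposite-sign finite-type-$2$/finite-type-$3$ situation near $(\pi/2,0)$), so the worst piece is the hyperbolic finite-type-$3$ one and the common range is the quadrilateral of Figure~\ref{fig:restrictionrange2}; summing the finitely many pieces gives $(III)$ on that region, and Lemma~\ref{equi} yields the claimed $d=2$ estimate.

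The main obstacle I expect is organisational rather than conceptual: one must check that after each affine normalisation the resulting phase really satisfies the finite-type smallness hypotheses of the theorem to which the piece is assigned (which is exactly what forces the decomposition to consist of short, fixed boxes rather than the $K$-dependent boxes of Section~2), and one must push the $\varepsilon$-removal and the interpolation against the Tomas--Stein and trivial endpoints through carefully enough to obtain the whole region of Figure~\ref{fig:restrictionrange2}, not merely the single exponent $22/7$. Verifying that it is precisely the hyperbolic finite-type-$3$ pieces (and the opposite-sign finite-type-$2$/finite-type-$3$ pieces) that are the bottleneck, using the separable structure of $\omega$ and the signs of $\psi'''$ at $\pm\pi/2$, is what pins the final answer down to Figures~\ref{fig:2drange} and \ref{fig:restrictionrange2}.
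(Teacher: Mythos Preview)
Your proposal is correct and follows essentially the same route as the paper: reduce via Lemma~\ref{equi} to the continuous extension estimate for the graph of $-\omega$, partition the torus into $O(1)$ pieces according to proximity to the zeros of $\psi''$, normalise each piece by an affine change of variables, and apply Theorems~\ref{2D}, \ref{WW}, \ref{Hyper}, \ref{main}, \ref{main2} and Remark~\ref{32} as appropriate. The paper carries out the same programme but writes it more tersely, exploiting periodicity and the identity $4\sin^{2}(\xi/2)=2-2\cos\xi$ to translate the degenerate points to the origin and obtain the explicit model phases $\sin\xi-\xi$ (finite type $3$) and $\cos\xi-1$ (finite type $2$); your direct Taylor analysis of $\psi$ at $\pm\pi/2$ amounts to the same computation, and you are more explicit than the paper about the $\varepsilon$-removal and interpolation needed to pass from the single exponent $22/7$ to the full region in Figure~\ref{fig:restrictionrange2}.
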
 
        \begin{proof}
        	By Lemma \ref{equi}, it suffices to reduce $S:=\lbrace (\xi,t)\in \T^{d}\times \R |\; t=-\omega(\xi)\rbrace$ to some finite-type curves/surfaces.
        	
        	For $d=1$, we can reduce it to 
        	\begin{equation*}
        		S_{1}:=\left\lbrace (\xi,t)\in [-\frac{\pi}{4},\frac{\pi}{4}]\times \R \bigg|\; t=\sin(\xi)\right\rbrace,
        	\end{equation*}
        	\begin{equation*}
        		S_{2}:=\left\lbrace (\xi,t)\in [-\frac{\pi}{4},\frac{\pi}{4}]\times \R \bigg|\; t=\cos(\xi)\right\rbrace,
        	\end{equation*}
        	since the restriction estimate is invariant under translation and the surface is periodic.
        	
        	Note that adding a linear term to the phase is harmless for the restriction estimate, since one can remove it by a suitable linear change of spatial variables. Thus, we can reduce the problem to the restriction estimate for 
        	\begin{equation*}
        		S_{1}':=\left\lbrace (\xi,t)\in [-\frac{\pi}{4},\frac{\pi}{4}]\times \R \bigg|\; t=\sin(\xi)-\xi\right\rbrace,
        	\end{equation*}
        	\begin{equation*}
        		S_{2}':=\left\lbrace (\xi,t)\in [-\frac{\pi}{4},\frac{\pi}{4}]\times \R \bigg|\; t=\cos(\xi)-1\right\rbrace.
        	\end{equation*}
        	Now, direct calculations show that $S_{1}'$ is of finite type $3$, $S_{2}'$ is of finite type $2$, and therefore we can apply Theorem \ref{2D}.
        	
        	For $d=2$, we can also reduce $S$ into the following three cases.
        	 \begin{equation*}
        	 	S_{1}':=\left\lbrace (\xi,t)\in [-\frac{\pi}{4},\frac{\pi}{4}]^{2}\times \R \bigg|\; t=\left[\sin(\xi_{1})-\xi_{1}\right]+\left[\sin(\xi_{2})-\xi_{2}\right]\right\rbrace,
        	 \end{equation*}
        	 \begin{equation*}
        	 	S_{2}':=\left\lbrace (\xi,t)\in [-\frac{\pi}{4},\frac{\pi}{4}]^{2}\times \R \bigg|\; t=\left[\cos(\xi_{1})-1\right]+\left[\sin(\xi_{2})-\xi_{2}\right]\right\rbrace,
        	 \end{equation*}
        	 \begin{equation*}
        	 	S_{3}':=\left\lbrace (\xi,t)\in [-\frac{\pi}{4},\frac{\pi}{4}]^{2}\times \R \bigg|\; t=\left[\cos(\xi_{1})-1\right]\pm\left[\cos(\xi_{2})-1\right]\right\rbrace.
        	 \end{equation*}
        	 Then, $S_{1}'$ is of finite type $3$, and we can apply Theorem \ref{main} and Theorem \ref{main2}. For $S_{2}'$, it is mixed with finite types $2$ and $3$, but we can also have the same restriction estimate; see Remark \ref{32} for more details. Besides, $S_{3}'$ is of finite type $2$, for which the restriction estimate has already been established in Theorem \ref{WW} and Theorem \ref{Hyper}.
        \end{proof}
		 As a direct application of Corollary \ref{res+stri}, we can establish the local well--posedness (LWP) of DNLS in the Fourier--Lebesgue regime. In the following, we will assume that $p,q$ are in the range stated in Corollary \ref{res+stri}, if there is no specification.
		 
		 \begin{thm}\label{LWP}
		 	If $\alpha<p\le \alpha q', \;q\ge 2$, then there exists $T=T(\alpha,p,q,\|f\|_{\widehat{L^{q'}}})>0$, such that DNLS (\ref{nonlinear}) has a unique solution $u\in C\left([0,T];\widehat{L^{q'}}\right)\cap L^{p}\left([0,T];\ell^{p}\right)$. 
		 	
		 	If $\alpha=p, \; q\ge 2$, then there exists $\eta_{0}=\eta_{0}(p,q)>0$, such that if $0<\eta \le \eta_{0}$, and $I$ is a compact interval containing $0$ satisfying
		 	\begin{equation}\label{con}
		 		\|e^{it\Delta_{disc}}f\|_{L^{p}\left(I;\ell^{p}\right)}\le\eta,
		 	\end{equation} 
		 	then DNLS (\ref{nonlinear}) has a unique solution $u\in C\left(I;\widehat{L^{q'}}\right)\cap L^{p}\left(I;\ell^{p}\right)$.
		 \end{thm}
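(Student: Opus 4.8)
The plan is to recast (\ref{nonlinear}) as the Duhamel fixed-point equation
\[
u=\Phi(u),\qquad \Phi(u)(t):=e^{it\Delta_{disc}}f-i\mu\int_{0}^{t}e^{i(t-s)\Delta_{disc}}\bigl(|u|^{\alpha-1}u\bigr)(s)\,ds ,
\]
and to solve it by a contraction-mapping argument inside a ball of $L^{p}([0,T];\ell^{p})$ (resp. $L^{p}(I;\ell^{p})$), recovering the $C(\cdot\,;\widehat{L^{q'}})$-regularity of the fixed point a posteriori from the identity $u=\Phi(u)$. Two elementary facts about the free propagator are used throughout. First, since $\mathcal{F}_{disc}(e^{it\Delta_{disc}}f)(\xi)=e^{-it\omega(\xi)}\mathcal{F}_{disc}f(\xi)$ with $|e^{-it\omega(\xi)}|=1$ and $\omega$ bounded, $e^{it\Delta_{disc}}$ is an isometry of $\widehat{L^{q'}}(\Z^{d})$ and $t\mapsto e^{it\Delta_{disc}}g$ is strongly continuous into $\widehat{L^{q'}}$ by dominated convergence. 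Second, Corollary~\ref{res+stri} supplies $\|e^{it\Delta_{disc}}g\|_{L^{p}_{x,t}(\Z^{d}\times\R)}\lesssim\|g\|_{\widehat{L^{q'}}}$ for $(\tfrac1p,\tfrac1q)$ in the admissible region.

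The key observation for the Duhamel term $w(t):=\int_{0}^{t}e^{i(t-s)\Delta_{disc}}F(s)\,ds$ is that both relevant norms reduce to $\int_{0}^{T}\|F(s)\|_{\widehat{L^{q'}}}\,ds$: for the $\widehat{L^{q'}}$-norm this is Minkowski's inequality together with the isometry, while for the $L^{p}_{x,t}$-norm one pulls the $L^{p}_{t}$-norm inside the $s$-integral by Minkowski, writes $e^{i(t-s)\Delta_{disc}}F(s)=e^{it\Delta_{disc}}\bigl(e^{-is\Delta_{disc}}F(s)\bigr)$, and applies Corollary~\ref{res+stri} and the isometry — so no retarded (Christ--Kiselev) estimate is needed. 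For $F=|u|^{\alpha-1}u$ one then chains three elementary inequalities: Hausdorff--Young $\|F\|_{\widehat{L^{q'}}}\le\|F\|_{\ell^{q'}}=\|u\|_{\ell^{\alpha q'}}^{\alpha}$ (valid since $q\ge2$), the nesting $\ell^{p}(\Z^{d})\hookrightarrow\ell^{\alpha q'}(\Z^{d})$, that is $\|u\|_{\ell^{\alpha q'}}\le\|u\|_{\ell^{p}}$ (valid since $p\le\alpha q'$), and Hölder in time, that is $\int_{0}^{T}\|u(s)\|_{\ell^{p}}^{\alpha}\,ds\le T^{1-\alpha/p}\|u\|_{L^{p}_{t}\ell^{p}_{x}}^{\alpha}$ (valid since $\alpha\le p$). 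Combining these with the pointwise bound $\bigl||u|^{\alpha-1}u-|v|^{\alpha-1}v\bigr|\le C_{\alpha}\bigl(|u|^{\alpha-1}+|v|^{\alpha-1}\bigr)|u-v|$ and Hölder on $\Z^{d}$ and in time, one obtains the a priori bound
\[
\|\Phi(u)\|_{L^{p}_{t}\ell^{p}_{x}}\le C\|f\|_{\widehat{L^{q'}}}+C\,T^{1-\alpha/p}\|u\|_{L^{p}_{t}\ell^{p}_{x}}^{\alpha}
\]
and the Lipschitz bound
\[
\|\Phi(u)-\Phi(v)\|_{L^{p}_{t}\ell^{p}_{x}}\le C\,T^{1-\alpha/p}\bigl(\|u\|_{L^{p}_{t}\ell^{p}_{x}}^{\alpha-1}+\|v\|_{L^{p}_{t}\ell^{p}_{x}}^{\alpha-1}\bigr)\|u-v\|_{L^{p}_{t}\ell^{p}_{x}} ,
\]
all norms being over $[0,T]$.

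In the subcritical case $\alpha<p$ the exponent $1-\alpha/p$ is strictly positive, so for a ball of radius $R'\sim\|f\|_{\widehat{L^{q'}}}$ in $L^{p}([0,T];\ell^{p})$ and $T=T(\alpha,p,q,\|f\|_{\widehat{L^{q'}}})$ small, $\Phi$ maps that complete ball into itself and is a contraction there; its fixed point $u$ satisfies $u=\Phi(u)$, and the right-hand side lies in $C([0,T];\widehat{L^{q'}})$ — the linear part by the isometry and strong continuity, the Duhamel part because its $\widehat{L^{q'}}$-norm is $\le C\int_{0}^{T}\|u\|_{\ell^{p}}^{\alpha}<\infty$ and it is continuous in $t$ by dominated convergence — whence $u\in C([0,T];\widehat{L^{q'}})\cap L^{p}([0,T];\ell^{p})$. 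Uniqueness in this class follows by applying the Lipschitz bound on short subintervals $[t_0,t_0+\delta]$, on which $\|u_1\|_{L^p\ell^p}+\|u_2\|_{L^p\ell^p}$ is small, and chaining across $[0,T]$. In the critical case $\alpha=p$ there is no power of $T$ to spend, so one instead runs the fixed point in the ball $\{u:\|u\|_{L^{p}(I;\ell^{p})}\le 2\eta\}$: by (\ref{con}) the free evolution contributes at most $\eta$ to that norm, the Duhamel term at most $C(2\eta)^{\alpha}$, and the Lipschitz constant is $2C(2\eta)^{\alpha-1}$; since $\alpha>1$, choosing $\eta_{0}=\eta_{0}(p,q)$ small (so that $2C(2\eta)^{\alpha-1}\le\tfrac12$ for $\eta\le\eta_{0}$, which also forces $C(2\eta)^{\alpha}\le\eta$) closes both the self-mapping and the contraction, and the $\widehat{L^{q'}}$-regularity of the fixed point is recovered as before, with $\|u\|_{C(I;\widehat{L^{q'}})}\le\|f\|_{\widehat{L^{q'}}}+C(2\eta)^{\alpha}$. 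The step requiring the most care is precisely the organisation of the critical case: the contraction must be run solely in the $L^{p}(I;\ell^{p})$-metric (so the ambient ball is genuinely complete), the hypothesis (\ref{con}) must serve as the only source of smallness, and one must verify that the $\widehat{L^{q'}}$-bound — which is not imposed on the contraction set — comes out finite from the Duhamel identity; past Corollary~\ref{res+stri}, every analytic input is one of the elementary inequalities above.
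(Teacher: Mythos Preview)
Your proposal is correct and uses exactly the same analytic inputs as the paper: Corollary~\ref{res+stri} for the linear evolution, the isometry of $e^{it\Delta_{disc}}$ on $\widehat{L^{q'}}$, Hausdorff--Young to bound $\||u|^{\alpha-1}u\|_{\widehat{L^{q'}}}\le\|u\|_{\ell^{\alpha q'}}^{\alpha}$, the lattice embedding $\ell^{p}\hookrightarrow\ell^{\alpha q'}$ from $p\le\alpha q'$, and H\"older in time.

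The only organisational difference is where the contraction is run. The paper works directly in the intersection space $X=C(I;\widehat{L^{q'}})\cap L^{p}(I;\ell^{p})$ with norm $\|u\|_{X}=\|u\|_{C(I;\widehat{L^{q'}})}+\|u\|_{L^{p}(I;\ell^{p})}$, checking self-mapping and contraction in both components simultaneously (and in the critical case using a set $X_{1}\cap X_{2}$ cut out by separate bounds on the two norms). You instead run the fixed point purely in a ball of $L^{p}_{t}\ell^{p}_{x}$ and read off the $C(I;\widehat{L^{q'}})$-regularity a posteriori from the Duhamel identity. Your route is marginally cleaner --- only one norm to track during the iteration, and no need to verify that the constraint set is complete in the product norm --- at the cost of the extra a posteriori paragraph and the short-subinterval argument for uniqueness in the full class (which the paper does not spell out). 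Both arrive at the same place with the same estimates.
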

		 \begin{proof}
		 	We first deal with the latter case. Consider the space $X=X_{1}\cap X_{2}$ as follows,
		     \begin{equation*}
		     	X_{1}:=\left\{u\in C\left(I;\widehat{L^{q'}}\right): \|u\|_{C\left(I;\widehat{L^{q'}}\right)}\le 2\|f\|_{\widehat{L^{q'}}}+C(2\eta)^{p}\right\},
		     \end{equation*} 
		     \begin{equation*}
		     	X_{2}:=\left\{u\in L^{p}\left(I;\ell^{p}\right): \|u\|_{L^{p}\left(I;\ell^{p}\right)}\le \min\lbrace2\eta, 2C\|f\|_{\widehat{L^{q'}}}\rbrace\right\},
		     \end{equation*}
		     where $C$ is the implicit constant in the improved Strichartz estimate in Corollary \ref{res+stri} and $X$ is equipped with the norm
		     \begin{equation*}
		     	\|u\|_{X}:=\|u\|_{L^{p}\left(I;\ell^{p}\right)}+\|u\|_{C\left(I;\widehat{L^{q'}}\right)}.
		     \end{equation*}
		     We now show that the solution map $u\mapsto \Phi(u)$ defined by
		     \begin{equation*}
		        \Phi(u)(t):=e^{it\Delta_{disc}}f-i\mu\int_{0}^{t}e^{i(t-s)\Delta_{disc}}|u(s)|^{\alpha-1}u(s)ds,
		     \end{equation*}
		     is a contraction in $X$.
		     
		     From the improved Strichartz estimate, H\"{o}lder's inequality, and Minkowski's inequality, we can obtain 
		     \begin{itemize}
		     	\item 
		     	\begin{equation*}
		     		\|\Phi(u)\|_{C\left(I;\widehat{L^{q'}}\right)}\le \|f\|_{\widehat{L^{q'}}}+\int_{I}\||u(s)|^{p-1}u(s)\|_{\widehat{L^{q'}}}ds
		     	\end{equation*}
		     	\begin{equation*}
		     		\le \|f\|_{\widehat{L^{q'}}}+\int_{I}\|u(s)\|_{\ell^{p q'}}^{p}ds\le \|f\|_{\widehat{L^{q'}}}+\|u\|_{L^{p}\left(I;\ell^{p}\right)}^{p}\le \|f\|_{\widehat{L^{q'}}}+(2\eta)^{p}
		     	\end{equation*}
		     	\item 
		     	\begin{equation*}
		     		\|\Phi(u)\|_{L^{p}\left(I;\ell^{p}\right)}\le 	\|e^{it\Delta_{disc}}f\|_{L^{p}\left(I;\ell^{p}\right)}+C\int_{I}\||u(s)|^{p-1}u(s)\|_{\widehat{L^{q'}}}ds
		     	\end{equation*}
		     	\begin{equation*}
		     		\le \eta+C\|u\|_{L^{p}\left(I;\ell^{p}\right)}^{p}\le \eta+C(2\eta)^{p}\le 2\eta
		     	\end{equation*}
		     	\begin{equation*}
		     		\|\Phi(u)\|_{L^{p}\left(I;\ell^{p}\right)}\le C\|f\|_{\widehat{L^{q'}}}+C\int_{I}\||u(s)|^{p-1}u(s)\|_{\widehat{L^{q'}}}ds
		     	\end{equation*}
		     	\begin{equation*}
		     		\le C\|f\|_{\widehat{L^{q'}}}+C\|u\|_{L^{p}\left(I;\ell^{p}\right)}^{p}\le C\|f\|_{\widehat{L^{q'}}}+C(2\eta)^{p-1}(2C\|f\|_{\widehat{L^{q'}}})\le2C\|f\|_{\widehat{L^{q'}}} 
		     	\end{equation*}
		     \end{itemize}
		     The above estimates hold if we take $\eta_{0}$ sufficiently small. Therefore, we have proved that $\Phi$ maps $X$ into $X$. Next, we check $\Phi$ is a contraction.
		     \begin{itemize}
		     	\item 
		     	\begin{equation*}
		     		\|\Phi(u)-\Phi(v)\|_{C\left(I;\widehat{L^{q'}}\right)}\le \int_{I}\||u(s)|^{p-1}u(s)-|v(s)|^{p-1}v(s)\|_{\widehat{L^{q'}}}ds
		     	\end{equation*}
		     	\begin{equation*}
		     		\lesssim \int_{I}\left\{\|u(s)\|_{\ell^{p}}^{p-1}+\|v(s)\|_{\ell^{p}}^{p-1}\right\}\|u(s)-v(s)\|_{\ell^{p}}ds
		     	\end{equation*}
		     	\begin{equation*}
		     		\lesssim \left(\|u\|_{L^{p}\left(I;\ell^{p}\right)}^{p-1}+\|v\|_{L^{p}\left(I;\ell^{p}\right)}^{p-1}\right)\|u-v\|_{L^{p}\left(I;\ell^{p}\right)}\le 2(2\eta)^{p-1}\|u-v\|_{X}
		     	\end{equation*}
		     	\item 
		     	\begin{equation*}
		     		\|\Phi(u)-\Phi(v)\|_{L^{p}\left(I;\ell^{p}\right)}\lesssim \int_{I}\||u(s)|^{p-1}u(s)-|v(s)|^{p-1}v(s)\|_{\widehat{L^{q'}}}ds
		     	\end{equation*}
		     	\begin{equation*}
		     		\lesssim 2(2\eta)^{p-1}\|u-v\|_{X}
		     	\end{equation*}
		     \end{itemize}
		     Thus, if $\eta_{0}$ is sufficiently small, we have $\|\Phi(u)-\Phi(v)\|_{X}\le \frac{1}{2}\|u-v\|_{X}$, and the fixed point yields a unique solution to DNLS (\ref{nonlinear}).
		    
		     \vspace{5pt}
		     Next, we complete the proof by establishing the first statement, which is easier than the latter case.
		     
		     We denote $R:=\|f\|_{\widehat{L^{q'}}}$, and then we consider the closed ball $B$ with radius $2CR$ in the space $C\left([0,T];\widehat{L^{q'}}\right)\cap L^{p}\left([0,T];\ell^{p}\right)$, which is equipped with the norm
		     \begin{equation*}
		     	\|u\|_{B}:=\|u\|_{L^{p}\left([0,T];\ell^{p}\right)}+\|u\|_{C\left([0,T];\widehat{L^{q'}}\right)}.
		     \end{equation*}
		     With the same solution map $\Phi$, we quickly check that $\Phi$ is a contraction on $B$. For convenience, we assume that $C>10$.
		     \begin{itemize}
		     	\item 
		     	\begin{equation*}
		     		\|\Phi(u)\|_{C\left([0,T];\widehat{L^{q'}}\right)}\le \|f\|_{\widehat{L^{q'}}}+\int_{0}^{T}\|u(s)\|_{\ell^{p}}^{\alpha}ds
		     	\end{equation*}
		     	\begin{equation*}
		     	   \le \|f\|_{\widehat{L^{q'}}}+T^{\frac{p-\alpha}{p}}\|u\|_{L^{p}\left([0,T];\ell^{p}\right)}^{\alpha}\le R+T^{\frac{p-\alpha}{p}}(2CR)^{\alpha}\le \frac{1}{2}CR
		     	\end{equation*}
		     	\item 
		     	\begin{equation*}
		     		\|\Phi(u)\|_{L^{p}\left([0,T];\ell^{p}\right)}\le C\|f\|_{\widehat{L^{q'}}}+C\int_{0}^{T}\|u(s)\|_{\ell^{p}}^{\alpha}ds
		     	\end{equation*}
		     	\begin{equation*}
		     		\le CR+CT^{\frac{p-\alpha}{p}}R^{\alpha}\le \frac{3}{2}CR
		     	\end{equation*}
		     	\item 
		     	\begin{equation*}
		     		\|\Phi(u)-\Phi(v)\|_{C\left([0,T];\widehat{L^{q'}}\right)}\le \int_{0}^{T}\||u(s)|^{\alpha-1}u(s)-|v(s)|^{\alpha-1}v(s)\|_{\widehat{L^{q'}}}ds
		     	\end{equation*}
		     	\begin{equation*}
		     		\lesssim \int_{0}^{T}\left\{\|u(s)\|_{\ell^{p}}^{\alpha-1}+\|v(s)\|_{\ell^{p}}^{\alpha-1}\right\}\|u(s)-v(s)\|_{\ell^{p}}ds
		     	\end{equation*}
		     	\begin{equation*}
		     		\lesssim T^{\frac{p-\alpha}{p}}\left(\|u\|_{L^{p}\left([0,T];\ell^{p}\right)}^{\alpha-1}+\|v\|_{L^{p}\left([0,T];\ell^{p}\right)}^{\alpha-1}\right)\|u-v\|_{L^{p}\left([0,T];\ell^{p}\right)}\le 2T^{\frac{p-\alpha}{p}}R^{\alpha-1}\|u-v\|_{B}
		     	\end{equation*}
		     	\item 
		     	\begin{equation*}
		     		\|\Phi(u)-\Phi(v)\|_{L^{p}\left([0,T];\ell^{p}\right)}\lesssim \int_{0}^{T}\||u(s)|^{\alpha-1}u(s)-|v(s)|^{\alpha-1}v(s)\|_{\widehat{L^{q'}}}ds
		     	\end{equation*}
		     	\begin{equation*}
		     		\lesssim 2T^{\frac{p-\alpha}{p}}R^{\alpha-1}\|u-v\|_{B}
		     	\end{equation*}
		     \end{itemize}
		     Then, we can take $T$ sufficiently small, and the contraction is ensured.
		 \end{proof}
		\begin{rem}
			From the proof, we can derive the following continuation criterion.
			If the maximal existence time $T^{\ast}<\infty$, then 
			\begin{equation*}
				\left\{
				\begin{aligned}
					& \sup_{t\in[0,T^{\ast})}\|u(t)\|_{\widehat{L^{q'}}}=\infty,  \quad \text{if} \; \alpha<p\le \alpha q',\\
					& \|u\|_{L^{p}\left([0,T^{\ast});\ell^{p}\right)}=\infty, \quad \text{if} \; \alpha=p.
				\end{aligned}
				\right.
			\end{equation*}
			Besides, when $q\ge 2$, we have $\widehat{L^{q'}}\subseteq\ell^{2}$, so we actually have established the well--posedness theory with higher integrability.
		\end{rem}
			In the latter case, i.e., $\alpha=p$, we see that if the $\widehat{L^{q'}}$--norm of initial data $f$ is sufficiently small, then we actually can establish the GWP of DNLS in the $\widehat{L^{q'}}$--regime. One step further, we can immediately obtain the asymptotic completeness of DNLS in the $\widehat{L^{q'}}$--regime.
		\begin{coro}
			For $d=1$, $q\ge2$ and $\alpha$ satisfying 
			\begin{equation*}
				\left\{
				\begin{aligned}
					& 0<\frac{1}{\alpha}\le \frac{1}{4q'},  \quad \text{if} \;\; 0<\frac{1}{q}\le\frac{1}{2},\\
					& 0<\frac{1}{\alpha}< \frac{1}{4}, \quad \text{if} \;\; q=\infty,
				\end{aligned}
				\right.
			\end{equation*}
			 there exists $\varepsilon_{0}>0$, such that if $\|f\|_{\widehat{L^{q'}}}\le \varepsilon_{0}$, then DNLS (\ref{nonlinear}) has a unique global solution $u\in C\left(\R;\widehat{L^{q'}}\right)\cap L^{\alpha}\left(\R;\ell^{\alpha}\right)$.
			
			For $d=2$, $q\ge 2$ and $\alpha$ satisfying 
			\begin{equation*}
				\left\{
				\begin{aligned}
					& 0<\frac{1}{\alpha}<\frac{4}{13},  \quad \text{if} \;\; 0<\frac{1}{q}\le\frac{3}{130},\\
					& 0<\frac{1}{\alpha}< \frac{4}{13}-\frac{7}{31}\left(\frac{1}{q}-\frac{3}{130}\right), \quad \text{if} \;\; \frac{3}{130}<\frac{1}{q}\le\frac{1}{2},\\
					& 0<\frac{1}{\alpha}\le \frac{1}{5}, \quad \text{if} \;\; q=2,
				\end{aligned}
				\right.
			\end{equation*}
			there also exists $\varepsilon_{0}>0$, such that if $\|f\|_{\widehat{L^{q'}}}\le \varepsilon_{0}$, then DNLS (\ref{nonlinear}) has a unique global solution $u\in C\left(\R;\widehat{L^{q'}}\right)\cap L^{\alpha}\left(\R;\ell^{\alpha}\right)$.
			
			Besides, we have the asymptotic completeness in the $\widehat{L^{q'}}$--regime, i.e., for the solution $u$ above, we can find $u_{\pm}\in \widehat{L^{q'}}$ such that
			\begin{equation*}
				\|u(t)-e^{it\Delta_{disc}}u_{\pm}\|_{\widehat{L^{q'}}}\to 0, \quad t\to\pm \infty,
			\end{equation*}
			or equivalently,
			\begin{equation*}
				\|e^{-it\Delta_{disc}}u(t)-u_{\pm}\|_{\widehat{L^{q'}}}\to 0, \quad t\to\pm \infty.
			\end{equation*}
		\end{coro}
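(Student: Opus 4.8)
The plan is to obtain this corollary from the tools already in place: the improved Strichartz estimate of Corollary \ref{res+stri} combined with the critical local theory ($\alpha=p$) of Theorem \ref{LWP}, followed by the standard small--data scattering argument. No genuinely new analytic input is needed --- the role of the restriction estimates for the finite--type $3$ curves and surfaces is precisely to supply, via the discrete extension operator of Lemma \ref{equi}, the Strichartz inequality
\begin{equation*}
\|e^{it\Delta_{disc}}f\|_{L^{\alpha}(\R;\ell^{\alpha}(\Z^{d}))}\lesssim\|f\|_{\widehat{L^{q'}}(\Z^{d})},\qquad\forall f\in\widehat{L^{q'}}(\Z^{d}),
\end{equation*}
where the left side is the space--time norm $L^{\alpha}_{x,t}$ of Corollary \ref{res+stri} by Fubini, since the two Lebesgue exponents coincide.

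First I would verify that the stated hypotheses on $(\alpha,q)$ are exactly the condition that $(\frac{1}{\alpha},\frac{1}{q})$ lie in the admissible region of Corollary \ref{res+stri}, so that the displayed Strichartz estimate holds. For $d=1$ the admissible set is the triangle of Figure 3 coming from Theorem \ref{2D}: with $p=\alpha$, the constraint $\frac{1}{q}+\frac{4}{p}\le1$ reads $\frac{1}{\alpha}\le\frac{1}{4q'}$, and $\frac{1}{\alpha}<\frac14$ is the open edge, matching the two displayed cases. For $d=2$ the admissible set is the region of Figure 2, obtained by interpolating the $L^{22/7}$ restriction estimate of Theorems \ref{main}--\ref{main2} (and Remark \ref{32}) with the Tomas--Stein point and the trivial $L^{1}\to L^{\infty}$ bound; the line $\frac{1}{\alpha}=-\frac{13}{55}\frac{1}{q}+\frac{7}{22}$ is precisely the relevant edge, so the two displayed $\alpha$--ranges are again exactly the admissible ones. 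This step is the only place where the explicit numerology of the corollary is used, and tracing the $d=2$ edge through the interpolation is the one point that requires care.

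Next I would fix $\varepsilon_{0}:=\eta_{0}/(2C)$, where $C$ is the constant in the Strichartz estimate above and $\eta_{0}=\eta_{0}(\alpha,q)$ is the threshold in the $\alpha=p$ case of Theorem \ref{LWP}. If $\|f\|_{\widehat{L^{q'}}}\le\varepsilon_{0}$, then the Strichartz bound gives $\|e^{it\Delta_{disc}}f\|_{L^{\alpha}(I;\ell^{\alpha})}\le C\varepsilon_{0}=\eta_{0}/2\le\eta_{0}$ on every interval $I$, in particular on $I=\R$. The contraction argument in the proof of Theorem \ref{LWP} (the $\alpha=p$ case) then applies verbatim with $I$ replaced by $\R$: its only ingredients are that Strichartz estimate, H\"older in time, Minkowski's inequality, the fact that $e^{it\Delta_{disc}}$ is unitary on $\widehat{L^{q'}}$, and the embedding $\ell^{\alpha}\hookrightarrow\ell^{\alpha q'}$, none of which needs $I$ bounded. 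This yields a unique global solution $u\in C(\R;\widehat{L^{q'}})\cap L^{\alpha}(\R;\ell^{\alpha})$; alternatively one exhausts $\R$ by compact intervals and uses the continuation criterion recorded after Theorem \ref{LWP}, since $\|u\|_{L^{\alpha}([0,T^{\ast});\ell^{\alpha})}$ stays below $2C\varepsilon_{0}$.

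Finally, for asymptotic completeness I would set $u_{\pm}:=f-i\mu\int_{0}^{\pm\infty}e^{-is\Delta_{disc}}|u(s)|^{\alpha-1}u(s)\,ds$. Using $\||u(s)|^{\alpha-1}u(s)\|_{\widehat{L^{q'}}}\le\|u(s)\|_{\ell^{\alpha q'}}^{\alpha}\le\|u(s)\|_{\ell^{\alpha}}^{\alpha}$, exactly as in the proof of Theorem \ref{LWP}, together with $u\in L^{\alpha}(\R;\ell^{\alpha})$, the integrand is absolutely integrable in $\widehat{L^{q'}}$, so $u_{\pm}\in\widehat{L^{q'}}$ is well defined. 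Since $e^{-is\Delta_{disc}}$ is unitary on $\widehat{L^{q'}}$ and Duhamel's formula gives $e^{-it\Delta_{disc}}u(t)=f-i\mu\int_{0}^{t}e^{-is\Delta_{disc}}|u(s)|^{\alpha-1}u(s)\,ds$, Minkowski's inequality yields
\begin{equation*}
\|e^{-it\Delta_{disc}}u(t)-u_{\pm}\|_{\widehat{L^{q'}}}\le\int_{t}^{\pm\infty}\|u(s)\|_{\ell^{\alpha}}^{\alpha}\,ds=\|u\|_{L^{\alpha}((t,\pm\infty);\ell^{\alpha})}^{\alpha}\longrightarrow0\quad\text{as }t\to\pm\infty,
\end{equation*}
which is the claimed scattering; applying the isometry $e^{it\Delta_{disc}}$ gives the first equivalent formulation. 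Overall the only real obstacle is the $d=2$ edge verification in the second step; everything else is the standard small--data global well--posedness and scattering template.
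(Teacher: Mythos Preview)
Your proposal is correct and follows essentially the same approach as the paper: both obtain global existence by invoking the $\alpha=p$ case of Theorem \ref{LWP} on $I=\R$ with the smallness hypothesis (\ref{con}) supplied by the Strichartz estimate of Corollary \ref{res+stri}, and both deduce scattering from Duhamel's formula together with the bound $\int_{\R}\||u(s)|^{\alpha-1}u(s)\|_{\widehat{L^{q'}}}\,ds\le\|u\|_{L^{\alpha}(\R;\ell^{\alpha})}^{\alpha}<\infty$. Your write-up is in fact somewhat more explicit than the paper's in verifying the numerology of the admissible $(\alpha,q)$ regions and in defining $u_{\pm}$, but there is no substantive difference in strategy.
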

	\begin{proof}
		The GWP follows from the improved Strichartz estimate in Corollary \ref{res+stri} and Theorem \ref{LWP} by choosing $I=\R$. As for the asymptotic completeness, we have the identity
		\begin{equation*}
			e^{-it\Delta_{disc}}u=f-i\mu \int_{0}^{t}e^{-is\Delta_{disc}}|u(s)|^{\alpha-1}u(s)ds.
		\end{equation*}
		Therefore, it suffices to show that
		\begin{equation*}
			\left\|\int_{t}^{\pm\infty}e^{-is\Delta_{disc}}|u(s)|^{\alpha-1}u(s)ds\right\|_{\widehat{L^{q'}}}\to 0, \quad t\to \pm\infty,
		\end{equation*}
		which follows from the monotone convergence and the estimate
		\begin{equation*}
			\left\|\int_{\R}e^{-is\Delta_{disc}}|u(s)|^{\alpha-1}u(s)ds\right\|_{\widehat{L^{q'}}}\le \int_{\R}\left\||u(s)|^{\alpha-1}u(s)\right\|_{\widehat{L^{q'}}}ds\le \|u\|_{L^{\alpha}(\R;\ell^{\alpha})}^{\alpha}<\infty.
		\end{equation*}
	\end{proof}

		\newpage
		\section*{Acknowledgement}
		The author is grateful to Prof. Alex Cohen for helpful introductions and discussions, and to Zhuoran Li for pointing out that the original result can be improved and the proof can be simplified.
		
		\section*{Conflict of interest statement}
		The author does not have any possible conflict of interest.
		
		\section*{Data availability statement}
		The manuscript has no associated data.
		\bigskip
		\bigskip

		\bibliographystyle{alpha}
		\bibliography{restriction}

	\end{document}